\newcommand{\OO}{\mathcal{O}}
\newcommand{\image}{\textnormal{im}\,}
\newcommand{\Hom}{\textnormal{Hom}}
\newcommand{\dimension}{\textnormal{dim}\,}
\newcommand{\rank}{\textnormal{rk}\,}
\newcommand{\Ac}{\mathcal{A}}
\newcommand{\Bc}{\mathcal{B}}
\newcommand{\Cc}{\mathcal{C}}
\newcommand{\Fc}{\mathcal{F}}
\newcommand{\Tc}{\mathcal{T}}
\newcommand{\Coh}{\mathrm{Coh}}
\newcommand{\arinj}{\ar@{^{(}->}}
\newcommand{\arsurj}{\ar@{->>}}
\newcommand{\areq}{\ar@{=}}
\newcommand{\wh}{\widehat}
\newcommand{\Bl}{\mathcal{B}^l}
\newcommand{\ch}{\mathrm{ch}}
\newcommand{\oZw}{\overline{Z}_\omega}
\newcommand{\oZl}{\overline{Z}^l}
\newcommand{\scalea}{\scalebox{0.5}}
\newcommand{\whPhi}{{\wh{\Phi}}}
\newcommand{\bo}{{\overline{\omega}}}
\newcommand{\wt}{\widetilde}
\newcommand{\ol}{\overline}
\newcommand{\ysize}{\Yboxdim{9pt}}
\newtheorem*{rep@theorem}{\rep@title}
\newcommand{\newreptheorem}[2]{%
\newenvironment{rep#1}[1]{%
 \def\rep@title{#2 \ref{##1}}%
 \begin{rep@theorem}}%
 {\end{rep@theorem}}}
\begin{document}

\title[Fourier-Mukai transforms of  stable sheaves on Weierstrass elliptic threefolds]{Fourier-Mukai transforms of slope stable torsion-free sheaves on Weierstrass elliptic threefolds}

\author[Jason Lo]{Jason Lo}
\address{Department of Mathematics \\
California State University, Northridge\\
18111 Nordhoff Street\\
Northridge CA 91330 \\
USA}
\email{jason.lo@csun.edu}

\keywords{Weierstrass threefold, elliptic threefold, Fourier-Mukai transform, stability}
\subjclass[2010]{Primary 14J30; Secondary: 14J33, 14J60}

\begin{abstract}
We focus on a class of  Weierstra{\ss} elliptic threefolds  that allows the base of the fibration to be a Fano surface or a numerically $K$-trivial surface.  We define the notion of limit tilt stability, and   show that the Fourier-Mukai transform of a slope stable torsion-free sheaf satisfying a vanishing condition in codimension 2 (e.g.\ a reflexive sheaf) is a limit tilt stable object.  We also show that   the inverse Fourier-Mukai transform of a limit tilt semistable object of nonzero fiber degree is a slope semistable torsion-free sheaf, up to modification in codimension 2.
\end{abstract}

\maketitle
\tableofcontents

\section{Introduction}

\subsection{Background and motivation} Given the derived category of coherent sheaves $D^b(X)$ on a smooth projective variety $X$ and an autoequivalence $\Phi$ of $D^b(X)$, the natural and fundamental question of how $\Phi$ affects slope stability or Gieseker stability for sheaves on $X$ has been studied in a multitude of works.  On threefolds,  there is Friedman-Morgan-Witten's spectral cover construction of stable sheaves on elliptic threefolds, which involves applying Fourier-Mukai transforms (FMTs) to stable sheaves supported on hypersurfaces (the `spectral covers'), which are rank-zero stable sheaves \cite{FMW}.  In Bridgeland-Maciocia's work \cite{BMef}, where they established the existence of Fourier-Mukai transforms on Calabi-Yau fibrations of relative dimension at most two, they  showed  that any connected component of the moduli of rank-one torsion-free sheaves on an elliptic threefold is isomorphic to a moduli of higher-rank Gieseker stable sheaves via a Fourier-Mukai transform.

In light of the connection between  the Bridgeland stability manifold  and  mirror symmetry \cite{SCK3,StabTC}, it has also become important to understand the action of autoequivalences on Bridgeland stability conditions.  Recently, the action of autoequivalences on specific moduli spaces of stable objects (e.g. stable pairs in the sense of Pandharipande-Thomas) has  proved relevant to understanding the structures of Pandharipande-Thomas (PT) invariants and Donaldson-Thomas (DT) invariants on elliptic threefolds, including their modularity  \cite{OS1,Dia15}.  Since slope stable torsion-free sheaves are basic building blocks of other stable objects such as Bridgeland stable objects and PT stable pairs, it is only natural to  study the Fourier-Mukai transforms of slope stable torsion-free sheaves on elliptic fibrations.

More specifically, Bayer-Macr\`{i}-Toda's construction of Bridgeland stability conditions on threefolds \cite{BMT1} (which has been verified on numerous types of threefolds, e.g.\ see \cite{li2019stability}) begins with a tilt of the category of coherent sheaves using slope stability for sheaves.  This means that in principle, in order to understand the action of the autoequivalence on the Bridgeland stability manifold, we need to first understand the autoequivalence group action on  slope stability for sheaves.  This is the primary motivation for Theorem \ref{thm:intro1}, our main result in this article.

  The elliptic surface analogue of Theorem \ref{thm:intro1} was established in the author's joint work with Liu and Martinez in \cite{LLM}, the results in which were then used to describe the actions of autoequivalences on Bridgeland stability conditions on elliptic surfaces \cite{Lo20}.

\subsection{Main results}  In this article, we focus on elliptic threefolds.  In particular, we consider  a class of  elliptic threefolds $p : X \to B$ that are Weierstra{\ss} and where $X$ is $K$-trivial.  Such threefolds admit  a Fourier-Mukai transform $\Phi : D^b(X) \overset{\thicksim}{\to} D^b(X)$ given by the relative Poincar\'{e} sheaf.

To state our main results, we need to recall that in Bayer-Macr\`{i}-Toda's conjectural construction of Bridgeland stability conditions on  threefolds, one needs an intermediary between slope stability for sheaves and the conjectured Bridgeland stability.  This intermediary is called tilt stability, and it is a notion of stability for certain 2-term complexes (see \ref{para:slopetiltstab} for a precise definition).  The definition of tilt stability involves an ample class $\omega$ on the threefold.  On an elliptic threefold $X$, if we deform $\omega$ along a hyperbola in the ample cone of $X$ so that $\omega$ approaches the  `fiber direction', we obtain a notion of stability  which we call   \emph{limit tilt stability}.  Limit tilt stability is similar to Bayer's notion of  polynomial stability condition.  Instead of having central charges that are polynomials, however,  we allow the central charge of limit tilt stability to be power series (see \ref{para:oZlstabasymppolystab}).


\begin{thm}[Theorem \ref{thm:paper14thm6-7analogue}]\label{thm:intro0}
Limit tilt stability has the Harder-Narasimhan  property.
\end{thm}

The Harder-Narasimhan (HN) property is a property expected of any reasonable notion of stability.  Theorem \ref{thm:intro0} ensures that standard properties that we have come to expect of stable objects, such as Hom-vanishing from a stable object of a larger phase to a stable object of a smaller phase, also apply to limit tilt stable objects.

Our   main theorem can  be paraphrased as follows:

\begin{thm}[Theorem \ref{thm:paper14thm5-1analogue}]\label{thm:intro1}
Let $X$ be a $K$-trivial Weierstra{\ss} elliptic threefold.
\begin{itemize}
\item[(A)] If $E$ is a slope stable torsion-free sheaf on $X$ such that certain morphisms from 1-dimensional sheaves into $E$ always vanish,  then $E$ is taken by $\Phi$ to a limit tilt stable object in $D^b(X)$.
\item[(B)] If $F$  is a  limit tilt semistable object in $D^b(X)$, then it can be modified in codimension 2 to $F'$ so that  the quasi-inverse of $\Phi$ takes $F'$ to a slope semistable torsion-free sheaf on $X$.
\end{itemize}
\end{thm}

The   vanishing condition in Theorem \ref{thm:intro1}(A) is always satisfied by a torsion-free reflexive sheaf (see Corollary \ref{cor:thm1reflexiveexample}).  Since a torsion-free sheaf $F$ differs from its double dual $F^{\ast \ast}$ (which is reflexive) only in codimension 2, we can also think of the first part of Theorem \ref{thm:paper14thm5-1analogue} as saying that every slope stable torsion-free sheaf is taken to a limit tilt stable object up to modification in codimension 2.

Informally, we can visualise the relation between slope stability for  sheaves and limit tilt stability in the diagram
\[
\xymatrix{
  \text{slope stable  (with Hom vanishing)} \ar@{=>}[r]^(.64){\text{FMT}} & \text{limit tilt stable} \ar@{=>}[d] \\
  \text{slope semistable} & \ar@{=>}[l]_{\text{inverse FMT}}^{\text{mod.\ in codim.\ 2}} \text{limit tilt semistable}
}
\]
which is reminiscent of the relation between slope stability and Gieseker stability for sheaves:
\[
\xymatrix{
  \text{slope stable} \ar@{=>}[r]  & \text{Gieseker stable} \ar@{=>}[d] \\
  \text{slope semistable} & \ar@{=>}[l] \text{Gieseker semistable}
}
\]
Therefore, one can  think of limit tilt stability as a ``refinement'' of slope stability up to the autoequivalence $\Phi$, just as Gieseker stability is a refinement of slope stability.  

\textbf{Main ideas of proofs.} Theorem \ref{thm:intro1} is a result that compares two notions of stability up to an autoequivalence: slope stability for sheaves, and limit tilt stability for complexes of sheaves.  Each of these two notions of stability is constructed by specifying  the heart of a t-structure and a central charge: In the case of slope stability, the heart is the category of coherent sheaves while the central charge is given by the rank and degree functions (see \ref{para:slopetiltstab}).  In the case of limit tilt stability, the heart $\Bc^l$ is constructed as a `limit' of the heart Bayer-Macr\`{i}-Toda constructed for tilt stability (Lemma \ref{lem:paper14lem4-6analogue}), while the central charge $\ol{Z}_\omega$ has the same form as tilt stability, but considered as a function that takes values in power series over $\mathbb{C}$ rather than just  in $\mathbb{C}$.  As a result, the proof of Theorem \ref{thm:intro1} is necessarily a comparison (up to an autoequivalence) of the hearts and the central charges of slope stability and limit tilt stability.  The details of this comparison are carried out in stages,  in Sections \ref{sec:cohomFMT} through \ref{sec:slopestabvslimittilstab}.

An innovation in the proofs of this article is finding an appropriate curve in the ample cone such that, when we deform an ample class $\omega$ along this curve, we obtain a reasonable notion of stability (limit tilt stability) that roughly matches  slope stability up to an autoequivalence.  The curve we found turns out to be a hyperbola (see \eqref{eq:limitcurve}).  This is in contrast to the large volume limit, which is a polynomial stability condition  obtained by deforming $\omega$ along a ray in the ample cone \cite[Section 6]{BMT1}.

The proof of Theorem \ref{thm:intro0}, on the other hand, consists of finding an `approximation' of the HN filtration first, followed by refining it to the HN filtration.  The approximation is given by a  torsion quadruple (see \ref{eq:paper14eq35}), which gives a 4-step filtration of any object in the derived category.  To refine this to the HN filtration, we adapt Toda's  argument for  refining a torsion pair (a 2-step filtration) into the HN filtration of a polynomial stability condition \cite{Toda1}.

\subsection{Relation to prior work} In joint work with Zhang \cite{LZ2},  the author considered the product threefold $X = C \times B$ where $C$ is a smooth elliptic curve and $B$ is a K3 surface of Picard rank 1.  The second projection $p : X \to B$ is  an elliptic fibration where all the fibers are copies of $C$, and hence are smooth, while  the Fourier-Mukai transform $D^b(C) \overset{\thicksim}{\to} D^b(C)$ given by the Poincar\'{e} line bundle can be lifted to a Fourier-Mukai transform $\Phi : D^b(X) \overset{\thicksim}{\to} D^b(X)$.  In  the predecessor  \cite{Lo14} to this article, the author proved a  coarser version of Theorem  \ref{thm:paper14thm5-1analogue} for the product threefold $X = C \times B$.  In the present article, more care is needed in dealing with technical issues such as a section of the fibration not being nef, or that there are nontrivial contributions from the base of the fibration towards cohomological Fourier-Mukai transforms.  Therefore, even though the results in the preceding article \cite{Lo14}  were less general, because of the simpler notation in the product case, the exposition there provides a more illuminating picture of the key ideas in this article.

\subsection{Outline of the paper}  In Section \ref{sec:prelim}, we lay out the assumptions we place on our elliptic threefold $p : X \to B$, and review preliminary concepts that will be used throughout the paper.  In Section \ref{sec:cohomFMT}, we explain how a matrix notation for Chern characters makes it easier to understand the cohomological Fourier-Mukai transform $\Phi$; we also solve a numerical equivalence that is necessary for comparing slope stability with tilt stability up to the autoequivalence $\Phi$.  In Section \ref{sec:limittiltstabconstruction}, we give the construction of limit tilt stability.  Section \ref{sec:phasecomputations} includes  computations of  phases of various objects with respect to limit tilt stability.  In Section \ref{sec:slopestabvslimittilstab}, we prove the main theorem, Theorem \ref{thm:paper14thm5-1analogue}, which is a comparison theorem between slope stability and limit tilt stability via the Fourier-Mukai transform $\Phi$.  We end the  paper with Section \ref{sec:HNpropertylimittiltstab}, in which we verify   the Harder-Narasimhan property of limit tilt stability.

\subsection{Acknowledgements}  The author thanks Wanmin Liu for  sharing his knowledge on elliptic fibrations through many helpful discussions, Ziyu Zhang for generously sharing his insight on  cohomological Fourier-Mukai transforms on elliptic fibrations, and Ching-Jui Lai for answering the author's  questions on elliptic fibrations.  He would also like to thank Antony Maciocia for a suggestion raised during the author's talk at Edinburgh in May 2017, which led to an improvement in the proofs of HN properties in this article.  In addition, he thanks Conan Leung, Jun Li and  Arend Bayer for conversations that provided the impetus for completing this work.  He thanks the Institute for Basic Science in Pohang, South Korea, for their support throughout his visit in June-July 2017, during which a substantial portion of this work was completed.  The author also thanks the referee for comments that led to substantial improvement of the article.  He was partially supported by NSF Grant DMS-2100906 and DARPA grant D21AP10109-00 during the final stage of  this project.

\section{Preliminaries}\label{sec:prelim}

\paragraph[Our elliptic fibration]  Throughout \label{para:ellipfibdef} this article, all the schemes will be over $\mathbb{C}$.  We will write $p : X \to B$ to denote a Weierstra{\ss}  threefold in the sense of \cite[Section 6.2]{FMNT} where $X$ and $B$ are both smooth projective varieties.  In particular, this means that $p$ is an elliptic threefold that is  a Weierstra{\ss} fibration.  By $p$ being an elliptic threefold, we mean that $X$ is a threefold and $B$ is a surface, while $p$ is a flat morphism whose fibers are Gorenstein curves of arithmetic genus 1; by $p$ also being Weierstra{\ss}, we mean that all the fibers of $p$ are geometrically integral and there exists a section $\sigma : B \hookrightarrow X$ such that the image  $\Theta = \sigma (B)$ does not intersect any singular point of the singular fibers.  The smoothness of $X$ implies that the generic fiber of $p$ is a smooth elliptic curve.  The existence of a section ensures that the singular fibers of $p$ can only be nodal or cuspidal curves.

We will assume the following on our elliptic fibration $p$ from Section \ref{sec:limittiltstabconstruction} through the end of the article:
\begin{enumerate}
  \item[(1)] $X$ is $K$-trivial, i.e.\ $\omega_X \cong \OO_X$.
  \item[(2)] The  cohomology ring over $\mathbb{Q}$ has a decomposition
  \begin{equation}\label{eq:assumptioncohomring}
    H^{2i}(X,\mathbb{Q}) = \Theta p^\ast H^{2i-2}(B,\mathbb{Q}) \oplus p^\ast H^{2i}(B,\mathbb{Q}).
  \end{equation}
  \item[(3)] There exists an ample class $H_B$ on $B$ satisfying:
  \begin{itemize}
  \item[(3a)] There exists $v' >0$ such that, for all $v > v'$, the divisor $\Theta + vp^\ast H_B$ is ample on $X$.
  \item[(3b)] $K_B \equiv hH_B$ for some  $h \in \mathbb{R}$, i.e.\ the canonical divisor of $B$ is numerically equivalent to some  real (possibly zero) multiple  of $H_B$.
  \end{itemize}
\end{enumerate}

We do not expect assumption (1) to be essential.  Assumption (2) is needed for computing intersection numbers; in principle, however,  it can be replaced by any explicit description of the structure of the cohomology ring of $X$.  Assumption (3) is the most crucial among the three, as it is needed for  comparing   stability  `before'   and  stability `after' applying the Fourier-Mukai transform.

We do not impose the vanishing $H^1(X,\OO_X)=0$ as in the case of \cite[Section 6.2.6]{FMNT} or \cite[Section 5]{MV}.  In particular, we allow the base $B$ to be  a K3 surface.

\smallskip

\textbf{Example.} Conditions (1) and (2) are assumed for the Calabi-Yau threefolds considered in \cite[Section 6.6.3]{FMNT}, where  only four possibilities of $B$ are considered: $B$ must be a Fano (i.e.\ del Pezzo) surface, a Hirzebruch surface, an Enriques surface, or the blowup of a Hirzebruch surface.  Condition (3a) is satisfied for all these four possibilities of $B$ (see \cite[Section 6.6.3]{FMNT}), while condition (3b) is satisfied when $B$ is Fano (in which case $H_B$ can be chosen to be $-K_B$ and  $h=-1$) or Enriques (in which case $K_B \equiv 0$ and $h=0$).

\textbf{Example.} Conditions (1), (2) and (3) are all satisfied for the elliptic threefolds considered by Diaconescu in \cite{Dia15}.  In the article \cite{Dia15}, it is assumed that  (1) holds and that the base  $B$ of the fibration  is Fano, and so condition (3b) holds if we choose $H_B=-K_B$ and $h=-1$.  A  decomposition of the form (2) is proved for the torsion-free part of the cohomology ring over $\mathbb{Z}$ in  \cite[Lemma 2.1]{Dia15}; by considering the cohomology ring over $\mathbb{Q}$, the torsion elements vanish and so condition (2)  holds.  By \cite[Corollary 2.2(i)]{Dia15}, if we choose $H_B=-K_B$ then  condition (3a) holds for $v'=1$.

\textbf{Example.} Conditions (1), (2) and (3) are all satisfied when $X = C \times S$ is the product of a smooth elliptic curve $C$ and a K3 surface $S$, and the fibration $p : X \to S$ is the second projection (i.e.\ $p$ is a trivial fibration) as in the case of the author's previous work \cite{Lo14, LZ2}.  In this case, condition (1) clearly holds,  condition (2) follows from \cite[Section 4.2]{LZ2},  condition (3a) follows from \cite[Lemma 4.7]{LZ2} by choosing $v'=0$, while condition (3b) holds by taking $h=0$ since $K_B = \mathcal{O}_B$.

\paragraph[Notation] We collect here some notions and notations that will be used throughout the article.

\subparagraph[Twisted Chern character] For any divisor $B$ on a smooth projective threefold $X$ and any $E \in D^b(X)$, the twisted Chern character $\ch^B(E)$ is defined as
\[
\ch^B(E) = e^{-B}\ch(E)
= (1-B+\tfrac{B^2}{2}-\tfrac{B^3}{6})\ch(E).
\]
  We write $\ch^B(E) = \sum_{i=0}^3 \ch_i^B(E)$ where
\begin{align*}
  \ch_0^B(E) &= \ch_0(E) \\
  \ch_1^B(E) &= \ch_1(E)-B\ch_0(E) \\
  \ch_2^B(E) &= \ch_2(E) -B\ch_1(E)+\tfrac{B^2}{2}\ch_0(E) \\
  \ch_3^B(E) &= \ch_3(E) -B\ch_2(E) + \tfrac{B^2}{2}\ch_1(E) - \tfrac{B^3}{6}\ch_0(E).
\end{align*}
We refer to $B$ as the `$B$-field' involved in the twisting of the Chern character.  In this article, there should be no risk of confusion as to whether  `$B$' refers to a $B$-field or the base of our elliptic threefold as in \ref{para:ellipfibdef}, as this will always be clear from the context.

\subparagraph Suppose $\Ac$ is an abelian category and $\Bc$ is the heart of a t-structure on $D^b(\Ac)$.  For any object $E \in D^b(\Ac)$, we will write $\mathcal{H}^i_{\Bc}(E)$ to denote the $i$-th cohomology object of $E$ with respect to the t-structure with heart $\Bc$.  When $\Bc = \Ac$, i.e.\ when the aforementioned t-structure is the standard t-structure on $D^b(\Ac)$, we will write $H^i(E)$ instead of $\mathcal{H}^i_{\Ac}(E)$.

Given a smooth projective variety $X$, the dimension of an object $E \in D^b(X)$ will be denoted by $\dimension E$, and refers to the dimension of its support, i.e.\
\[
  \dimension E = \dimension \bigcup_i \mathrm{supp}\, H^i(E).
\]
That is, for a coherent sheaf $E$, we have $\dimension E = \dimension \mathrm{supp} (E)$.

\subparagraph[Torsion pairs and tilting]  A \label{para:torsionpairtilting} torsion pair $(\Tc, \Fc)$ in an abelian category $\mathcal{A}$ is a pair of full subcategories $\Tc, \Fc$ such that
\begin{itemize}
\item[(i)] $\Hom_{\Ac}(E', E'')=0$ for all $E' \in \Tc, E'' \in \Fc$.
\item[(ii)] Every object $E \in \Ac$ fits in an $\Ac$-short exact sequence
\[
0 \to E' \to E \to E'' \to 0
\]
for some $E' \in \Tc, E'' \in \Fc$.
\end{itemize}
The decomposition of $E$ in (ii) is canonical \cite[Chapter 1]{HRS}, and we will occasionally refer to it as the $(\Tc,\Fc)$-decomposition of $E$ in $\Ac$.

Whenever we have a torsion pair $(\Tc, \Fc)$ in an abelian category $\mathcal{A}$, we will refer to $\Tc$ (resp.\ $\Fc$) as the torsion class (resp.\ torsion-free class) of the torsion pair.  The extension closure in $D^b(\Ac)$
\[
  \Ac' = \langle \Fc [1], \Tc \rangle
\]
is the heart of a t-structure on $D^b(\Ac)$, and hence an abelian subcategory of $D^b(\Ac)$.  We call $\Ac'$ the tilt of $\Ac$ at the torsion pair $(\Tc, \Fc)$.  More specifically, the category $\Ac'$ is the heart of the t-structure $(D^{\leq 0}_{\Ac'}, D^{\geq 0}_{\Ac'})$ on $D^b(\Ac)$ where
\begin{align*}
  D^{\leq 0}_{\Ac'} &= \{ E \in D^b(\Ac) : \mathcal{H}_{\Ac}^0 (E)\in \Tc, \mathcal{H}^i_{\Ac} (E)= 0 \, \forall\, i > 0 \}, \\
  D^{\geq 0}_{\Ac'} &= \{ E \in D^b(\Ac) : \mathcal{H}_{\Ac}^{-1} (E)\in \Fc, \mathcal{H}^i_{\Ac} (E)= 0 \,\forall\, i < -1 \}.
\end{align*}

A subcategory of $\Ac$ will be called a torsion class (resp.\ torsion-free class) if it is the torsion class (resp.\ torsion-free class) in some torsion pair in $\Ac$.  By a lemma of Polishchuk \cite[Lemma 1.1.3]{Pol}, if $\Ac$ is a noetherian abelian category, then every subcategory that is closed under extension and quotient in $\Ac$ is a torsion class in $\Ac$.  The reader may  refer to
\cite{HRS} for the basic properties of torsion pairs and tilting.

For any subcategory $\mathcal{C}$ of an abelian category $\mathcal{A}$, we will set
\[
  \mathcal{C}^\circ = \{ E \in \mathcal{A} : \Hom_{\mathcal{A}}(F,E)=0 \text{ for all } F \in \mathcal{C} \}
\]
when $\mathcal{A}$ is clear from the context.  Note that whenever $\mathcal{A}$ is noetherian and $\mathcal{C}$ is closed under extension and quotient in $\mathcal{A}$, the pair $(\mathcal{C},\mathcal{C}^\circ)$ gives a torsion pair in $\mathcal{A}$.

\subparagraph[Torsion $n$-tuples] A torsion $n$-tuple $(\Cc_1, \Cc_2,\cdots,\Cc_n)$ in an abelian category $\Ac$ as defined in \cite[Section 2.2]{Pol2} is a collection of full subcategories of $\Ac$ such that
\begin{itemize}
\item $\Hom_{\Ac} (C_i,C_j)=0$ for any $C_i \in \mathcal C_i, C_j \in \mathcal C_j$ where $i<j$.
\item Every object $E$ of $\Ac$ admits a filtration in $\Ac$
\[
  0=E_0 \subseteq E_1 \subseteq E_2 \subseteq \cdots \subseteq E_n = E
\]
where $E_i/E_{i-1} \in \mathcal C_i$ for each $1 \leq i \leq n$.
\end{itemize}
The same notion  also appeared in Toda's work \cite[Definition 3.5]{Toda2}.  Note that, given a torsion $n$-tuple in $\Ac$ as above, the pair $(\langle \Cc_1, \cdots, \Cc_i\rangle, \langle \Cc_{i+1}, \cdots, \Cc_n \rangle )$ is a torsion pair in $\Ac$ for any $1 \leq i \leq n-1$.

\subparagraph[Fourier-Mukai transforms] For any Weierstra{\ss} elliptic fibration $p : X \to B$ in the sense of \cite[Section 6.2]{FMNT} where $X$ is smooth (which implies that $B$ is Cohen-Macaulay \cite[Proposition C.1]{FMNT}), there is a pair of relative Fourier-Mukai transforms $\Phi, \whPhi : D^b(X) \overset{\thicksim}{\to} D^b(X)$ whose kernels are both sheaves on $X \times _B X$, satisfying
\begin{equation}\label{eq:PhiwhPhiisidshifted}
  \whPhi \Phi = \mathrm{id}_{D^b(X)}[-1] = \Phi \whPhi.
\end{equation}
In particular, the kernel of $\Phi$ is the relative Poincar\'{e} sheaf for the fibration $p$, which is a universal sheaf for the moduli problem that parametrises degree-zero, rank-one torsion-free sheaves on the fibers of $p$.  An object $E \in D^b(X)$ is said to be $\Phi$-WIT$_i$ if $\Phi E$ is a coherent sheaf sitting at degree $i$.  In this case, we write $\wh{E}$ to denote the coherent sheaf satisfying  $\Phi E \cong \wh{E} [-i]$ up to isomorphism.  The notion of $\whPhi$-WIT$_i$ can similarly be defined.  The identities \eqref{eq:PhiwhPhiisidshifted} imply that, if a coherent sheaf $E$ on $X$ is $\Phi$-WIT$_i$ for $i=0, 1$, then $\wh{E}$ is $\whPhi$-WIT$_{1-i}$.  For $i=0,1$, we will define the category
\[
  W_{i,\Phi} = \{ E \in \Coh (X) : E \text{ is $\Phi$-WIT$_i$} \}
\]
and similarly for $\whPhi$.  Due to the symmetry between $\Phi$ and $\whPhi$, the properties held by $\Phi$ also hold for $\whPhi$.     The reader may refer to \cite[Section 6.2]{FMNT} for more background on the functors $\Phi, \whPhi$.

\subparagraph[Subcategories of $\Coh (X)$] We fix our notation for various full subcategories of $\Coh (X)$ that will be used throughout this paper.  For any integers $d \geq e$, we set
\begin{align*}
\Coh^{\leq d}(X) &= \{ E \in \Coh (X): \dimension \mathrm{supp}(E) \leq d\} \\
\Coh^{= d}(X) &= \{ E \in \Coh (X): \dimension \mathrm{supp}(E) = d\} \\
\Coh^{\geq d}(X) &= \{ E \in \Coh (X): \Hom_{\Coh (X)} (F,E)=0 \text{ for all }F \in \Coh^{\leq d-1}(X)\} \\
\Coh (p)_{\leq d} &= \{ E \in \Coh (X): \dimension p (\mathrm{supp}(E)) \leq d\} \\
\Coh^d(p)_e &= \{ E \in \Coh (X): \dimension \mathrm{supp}(E) = d, \dimension p (\mathrm{supp}(E)) = e\} \\
\Coh (p)_0 &= \{ E \in \Coh (X) : \dimension p (\mathrm{supp}(E)) = 0\} \\
\{ \Coh^{\leq 0} \}^\uparrow &= \{ E \in \Coh (X): E|_b \in \Coh^{\leq 0} (X_b) \text{ for all closed points $b \in B$} \}
\end{align*}
where $\Coh^{\leq 0}(X_b)$ is the category of coherent sheaves supported in dimension 0 on the fiber $p^{-1}(b)=X_b$, for the closed point $b \in B$.  We will refer to coherent sheaves that are supported on a finite number of fibers of $p$ as fiber sheaves; then  $\Coh (p)_0$ is precisely the category of fiber sheaves on $X$.

\subparagraph[Torsion classes in $\Coh (X)$] For any integer $d$, the categories $\Coh^{\leq d}(X), \Coh (p)_{\leq d}, \Coh (p)_0$ as well as $\{\Coh^{\leq 0}\}^\uparrow, W_{0,\whPhi}$ are all torsion classes in $\Coh (X)$.  From \ref{para:torsionpairtilting}, each of these torsion classes determines a tilt of $\Coh (X)$, and hence determines a t-structure on $D^b(X)$. The behaviours of these t-structures under the Fourier-Mukai transforms $\Phi, \whPhi$ were studied by Zhang and the author  in \cite{Lo7, Lo11, LZ2}.  In particular, we have the torsion pairs $(W_{0,\whPhi}, W_{1,\whPhi})$ and $(\Coh^{\leq d}(X), \Coh^{\geq d+1}(X))$ in $\Coh (X)$.

\subparagraph[Slope stability and tilt stability]\label{para:slopetiltstab} Suppose \label{para:muomegaBslopefctndefn} $X$ is a smooth projective threefold with a fixed ample divisor   $\omega$ and a fixed divisor $B$.  For any coherent sheaf $E$ on $X$, we  define
\[
  \mu_{\omega,B} (E) = \begin{cases}
  \frac{\omega^2 \ch_1^B (E)}{\ch_0^B(E)} &\text{ if $\ch_0^B(E) \neq 0$} \\
  +\infty &\text{ if $\ch_0^B(E)=0$}\end{cases}.
\]
A coherent sheaf $E$ on $X$ is said to be $\mu_{\omega,B}$-stable or slope stable (resp.\ $\mu_{\omega,B}$-semistable or slope semistable) if, for every short exact sequence in $\Coh (X)$ of the form
\[
0 \to M \to E \to N \to 0
\]
where $M,N \neq 0$, we have $\mu_{\omega,B} (M) < (\text{resp.}\, \leq) \, \mu_{\omega,B} (N)$.  The slope function $\mu_{\omega,B}$ has the Harder-Narasimhan (HN) property, i.e. every coherent sheaf $E$ on $X$ has a filtration by coherent sheaves
\[
  E_0 \subsetneq E_1 \subsetneq \cdots \subsetneq E_n = E
\]
where each $E_i/E_{i-1}$ is $\mu_{\omega,B}$-semistable and $\mu_{\omega,B} (E_0) > \mu_{\omega,B} (E_1/E_0) > \cdots > \mu_{\omega,B} (E_n/E_{n-1})$.

For any  coherent sheaf $M$ with $\ch_0(M) \neq 0$,  we have
\[
\mu_{\omega,B} (M) = \frac{\omega^2 \ch_1^B(M)}{\ch_0(M)} = \frac{\omega^2\ch_1(M)-\omega^2 B\ch_0(M)}{\ch_0(M)} = \mu_\omega (M) -\omega^2B.
\]
Hence $\mu_{\omega,B}$-stability is equivalent to $\mu_\omega$-stability for  coherent sheaves.

The HN property of the slope function $\mu_{\omega,B}$ on $\Coh (X)$ implies  that the subcategories of $\Coh (X)$
\begin{align*}
  \Tc_{\omega,B} &= \langle E \in \Coh (X) : E \text{ is $\mu_{\omega,B}$-semistable}, \mu_{\omega,B} (E)> 0 \rangle, \\
  \Fc_{\omega,B} &= \langle E \in \Coh (X) : E \text{ is $\mu_{\omega,B}$-semistable}, \mu_{\omega,B} (E) \leq 0 \rangle
\end{align*}
form a torsion pair in $\Coh (X)$ and  give rise to the tilt of $\Coh (X)$
\[
  \Bc_{\omega,B} = \langle \Fc_{\omega,B} [1], \Tc_{\omega,B} \rangle.
\]
For any object $F \in \Bc_{\omega,B}$, we  set
\[
 \nu_{\omega,B} (F) = \begin{cases}
 \frac{\omega\ch_2^B(F) - \tfrac{\omega^3}{6}\ch_0^B(F)}{\omega^2 \ch_1^B(F)} &\text{ if $\omega^2 \ch_1^B(F) \neq 0$} \\
 +\infty &\text{ if $\omega^2 \ch_1^B(F) = 0$}\end{cases}.
\]
An object $F \in \Bc_{\omega,B}$ is said to be $\nu_{\omega,B}$-stable or tilt stable (resp.\ $\nu_{\omega,B}$-semistable or tilt semistable) if, for every short exact sequence in $\Bc_{\omega,B}$
\begin{equation}\label{eq:Bcomegases}
0 \to M \to F \to N \to 0
\end{equation}
where $M,N \neq 0$, we have $\nu_{\omega,B} (M) < (\text{resp.} \leq )\, \nu_{\omega,B} (N)$.  Tilt stability is a key notion in a now-standard construction of Bridgeland stability conditions on threefolds \cite{BMT1}.

When $B=0$, we  often drop the subscript $B$ in  $\mu_{\omega, B}, \nu_{\omega, B}$ and $\Tc_{\omega,B}, \Fc_{\omega,B},\Bc_{\omega,B}$ above; for instance, we simply write $\mu_\omega$ instead of $\mu_{\omega, 0}$.

\subparagraph[(Reduced) central charge]\label{para:tiltstab2} We will write
\[
\mathbb{H} = \{ re^{i\theta \pi} : r>0, \theta \in (0,1] \}
\]
to denote the strict upper-half complex plane together with the negative real axis, and write
\[
\mathbb{H}_0 = \mathbb{H} \cup \{0\}.
\]
When $X$ is a smooth projective threefold and $\omega$ is an ample divisor on $X$, we  define the reduced central charge $\ol{Z}_{\omega} : K(D^b(X)) \to \mathbb{C}$ where
\begin{equation*}
 \ol{Z}_{\omega} (F) = \tfrac{1}{2}\omega^2\ch_1(F) + i \left( \omega \ch_2(F) - \tfrac{\omega^3}{6} \ch_0(F)\right).
\end{equation*}
Then for any nonzero $F \in \Bc_\omega$, we have $\ol{Z}_\omega (F) \in -i\mathbb{H}_0$ \cite[Remark 3.3.1]{BMT1}.  We can also define the phase $\phi (F)$ of a nonzero object $F$ in $\Bc_\omega$ by setting $\phi (F) = \tfrac{1}{2}$ if $\ol{Z}_\omega (F)=0$, and requiring
\[
  \ol{Z}_\omega (F) \in \mathbb{R}_{>0}e^{i\phi (F) \pi} \text{ where $\phi (F) \in (-\tfrac{1}{2},\tfrac{1}{2}]$}
\]
if $\ol{Z}_\omega (F) \neq 0$.  An object $F \in \Bc_\omega$ is then said to be $\ol{Z}_\omega$-stable (resp.\ $\ol{Z}_\omega$-semistable) if, for every short exact sequence \eqref{eq:Bcomegases} in $\Bc_\omega$, we have $\phi (M) < (\text{resp.} \leq )\,  \phi (N)$.  Note that $\ol{Z}_\omega$-stability is equivalent to $\nu_\omega$-stability, i.e.\ tilt stability.

\subparagraph[Slope-like functions]  Suppose $\Ac$ is an abelian category.  We call a function $\mu$ on $\Ac$ a slope-like function if $\mu$ is defined by
\[
  \mu (F) = \begin{cases} \frac{C_1(F)}{C_0(F)} &\text{ if $C_0(F) \neq 0$} \\
  +\infty &\text{ if $C_0(F)=0$} \end{cases}
\]
where $C_0, C_1 : K(\Ac) \to \mathbb{Z}$ are a pair of group homomorphisms  satisfying: (i) $C_0(F) \geq 0 $ for any $F \in \Ac$; (ii) if $F \in \Ac$ satisfies $C_0(F)=0$, then $C_1 (F) \geq 0$.  The additive group $\mathbb{Z}$ in the definition of a slope-like function can be replaced by any  discrete additive subgroup of $\mathbb{R}$.  Whenever $\Ac$ is a noetherian abelian category, every slope-like function possesses the Harder-Narasimhan property \cite[Section 3.2]{LZ2}; we will then say an object $F \in \Ac$ is $\mu$-stable (resp.\ $\mu$-semistable) if, for every short exact sequence $0 \to M \to F \to N \to 0$ in $\Ac$ where $M,N \neq 0$, we have $\mu (M) < (\text{resp.} \leq \, ) \mu (N)$.

\section{Cohomological Fourier-Mukai transforms}\label{sec:cohomFMT}

In this section, we will assume conditions (1), (2) and (3a) in \ref{para:ellipfibdef}.  We will see how (3b) arises as a necessary condition for comparing slope stability and tilt stability under a Fourier-Mukai transform, and explain what is meant by taking a `limit' when we speak of  limit tilt stability.  The author thanks Ziyu Zhang for his  insight that the cohomological Fourier-Mukai transforms become easier to understand when the Chern characters are twisted before applying the transform.

\paragraph[The cohomological Fourier-Mukai transform formula for $\Phi$] Take any object $E \in D^b(X)$. By assumption \eqref{eq:assumptioncohomring}, we can write
\begin{align}
  \ch_{0}(E) &= n \notag\\
  \ch_{1}(E) &= x\Theta + p^\ast S \notag  \\
  \ch_{2}(E) &= \Theta p^\ast \eta + af \notag\\
  \ch_{3}(E) &= s \label{eq:chEtorsionfreepart}
\end{align}
for some $n,x \in \mathbb{Z}, a,s \in \mathbb{Q}$, $S, \eta \in A^1(B)_{\mathbb{Q}}$,  where $f$ denotes the class of a fiber of $p$.  Then
\begin{align*}
  \ch_0 (\Phi E) &= x \\
  \ch_1 (\Phi E) &= -n\Theta + p^\ast \eta - \tfrac{1}{2} xc_1 \\
  \ch_2 (\Phi E) &= (\tfrac{1}{2} n c_1 - p^\ast S)\Theta + (s - \tfrac{1}{2} c_1 \Theta p^\ast \eta + \tfrac{1}{12} xc_1^2 \Theta )f \\
  \ch_3 (\Phi E) &= -\tfrac{1}{6} n\Theta c_1^2 -a +\tfrac{1}{2} \Theta c_1 p^\ast S
\end{align*}
where $c_1 = - p^\ast (K_B)$ from the formula \cite[(6.26)]{FMNT}.  Rewriting in terms of $K_B$, we obtain
\begin{align}
  \ch_0 (\Phi E) &= x \notag\\
  \ch_1 (\Phi E) &= -n\Theta + p^\ast (\eta + \tfrac{1}{2} xK_B) \notag\\
  \ch_2 (\Phi E) &= -\Theta p^\ast (S+\tfrac{1}{2}nK_B) + (s+\tfrac{1}{2}\eta K_B + \tfrac{1}{8}xK_B^2)f -\tfrac{1}{24}xK_B^2f \notag\\
  \ch_3 (\Phi E) &= -(a+\tfrac{1}{2}SK_B + \tfrac{1}{8}nK_B^2) - \tfrac{1}{24} nK_B^2. \label{eq:chPhiE}
\end{align}

\paragraph[A matrix notation for Chern characters] We introduce a notation for Chern characters that makes the cohomological Fourier-Mukai transform formula as easy to understand as in the product case of \cite{Lo14}.  The assumption \eqref{eq:assumptioncohomring} on the cohomology ring of $X$ says that it is the direct sum of the following six vector spaces:
\[
\begin{matrix}
  H^0(X,\mathbb{Q}) & p^\ast H^2(B,\mathbb{Q}) & p^\ast H^4(B,\mathbb{Q}) \\
  \Theta p^\ast H^0(B,\mathbb{Q}) & \Theta p^\ast H^2(B,\mathbb{Q}) & H^6(X,\mathbb{Q})
\end{matrix}.
\]
We can therefore think of the Chern character $\ch(E)$ in \eqref{eq:chEtorsionfreepart} as a matrix
\[
\begin{pmatrix}
n & p^\ast S & af \\
x\Theta & \Theta p^\ast \eta & s
\end{pmatrix}.
\]
Let us write $B$ to denote the $B$-field $B=\tfrac{1}{2}c_1 = -\tfrac{1}{2}p^\ast K_B$.    In the above matrix notation, twisting $\ch(E)$ by this $B$-field gives
\begin{equation}\label{eq:chEtwistedtfpart}
\ch^B(E) =
\begin{pmatrix}
n & p^\ast (S + \tfrac{1}{2}nK_B) & (a+\tfrac{1}{2}SK_B + \tfrac{1}{8} nK_B^2)f \\
x\Theta & \Theta p^\ast (\eta + \tfrac{1}{2}xK_B) & s+\tfrac{1}{2}\eta K_B + \tfrac{1}{8}xK_B^2
\end{pmatrix}
\end{equation}
and
\[
\ch(\Phi E) =
\begin{pmatrix}
x & p^\ast (\eta + \tfrac{1}{2}xK_B) & (s+\tfrac{1}{2}\eta K_B + \tfrac{1}{8}xK_B^2)f- \tfrac{1}{24}xK_B^2 f \\
-n\Theta & -\Theta p^\ast (S+\tfrac{1}{2}nK_B) & -(a+\tfrac{1}{2}SK_B + \tfrac{1}{8}nK_B^2) - \tfrac{1}{24}nK_B^2
\end{pmatrix}
\]
Conceptually, this makes the cohomological Fourier-Mukai transform very similar to the case of the product threefold (see \cite[4.1]{Lo14}):  In the product case, the cohomological FMT simply swaps the two rows of $\ch(E)$ in matrix notation, and changes the signs of the lower row (ignoring $\Theta$ and $f$ in the expressions).  The situation for Weierstra{\ss} threefolds here is similar: up to adding the  terms $- \tfrac{1}{24}xK_B^2 f \in A^2(X)$ and $- \tfrac{1}{24}nK_B^2 \in A^3(X)$, the cohomological FMT  also swaps the two rows of $\ch^B (E)$ and changes the signs of the lower row.  Indeed, if $K_B =\mathcal{O}_B$, then the above cohomological FMT formula reduces to that of the product case in \cite{Lo14}.

\paragraph[Polarisations on $X$] From \label{para:polarisationsonX} the Nakai-Moishezon Criterion \cite[Theorem 1.42]{KM}, the section $\Theta$ is a $p$-ample divisor on the Weierstra{\ss} threefold $X$.  Then, for any ample divisor $H_B$ on $B$, there exists  some $v_0>0$ such that $\Theta + vp^\ast H_B$ is an ample divisor on $X$ for all $v > v_0$ \cite[Proposition 1.45]{KM}.  We will always use polarisations of this form on $X$.

\paragraph[A numerical equivalence of cycles] On \label{para:Chowringequation} the elliptic threefold $X$, in order for us to compare slope stability  for a coherent sheaf $E$  with respect to a polarisation $\bo$, and  limit tilt stability for $\Phi E [1]$ with respect to another polarisation $\omega$,  we need to find a way to deform the polarisation $\omega$ so that, under this deformation of $\omega$ (which we can think of as taking a limit in $\omega$),  there exists a positive constant $C$ satisfying
\begin{equation}\label{eq:ch1EImoZwPhiE1asympequiv}
 C \cdot\Im \oZw (\Phi E [1]) \to  \bo^2 \ch_1^B(E)
\end{equation}
for any $E \in D^b(X)$.  By \ref{para:polarisationsonX}, we can write the polarisations $\bo, \omega$ on $X$ as
\begin{align*}
  \bo &= \bo_1 + \bo_2 \text{\quad with \quad } \bo_1 = a\Theta, \bo_2 = p^\ast \overline{H_B} \\
  \omega &= \omega_1 + \omega_2 \text{\quad with \quad} \omega_1 = u\Theta, \omega_2 = p^\ast \wh{H_B}
\end{align*}
where $a, u \in \mathbb{R}_{>0}$ and $\overline{H_B}, \wh{H_B}$ are ample classes on $B$.  With $\ch(E)$ as in \eqref{eq:chEtorsionfreepart}, we will also write
\begin{align*}
  A_1 &= x\Theta \\
  A_2 &= p^\ast (S+\tfrac{1}{2}nK_B) \\
  A_3 &= (s+\tfrac{1}{2}\eta K_B + \tfrac{1}{8} xK_B^2)f - \tfrac{1}{24}xK_B^2f
\end{align*}
to simplify some of the calculations to come.  Then
\begin{align}
  \bo^2 \ch_1^B(E) &= \bo^2 A_1 + \bo^2 A_2 \notag\\
  &= \bo^2 A_1 + \bo_1 (\bo_1 + 2\bo_2)A_2 \notag\\
  &= \bo^2 \Theta x + \bo_1 (\bo_1 + 2\bo_2)A_2 \label{eq:boch1Be-v1}
\end{align}
while
\begin{align}
  \Im \oZw (\Phi E [1]) &= \omega \ch_2 (\Phi E [1]) - \tfrac{\omega^3}{6} \ch_0 (\Phi E [1]) \notag\\
  &= \omega(\Theta A_2 - A_3) + \tfrac{\omega^3}{6}x \notag\\
  &= \omega\Theta A_2 -\omega_1 A_3 +  \tfrac{\omega^3}{6}x.  \label{eq:ImoZwPhiE1-v1}
\end{align}
Later in the article, we will let $u \to 0$, in which case we need the following numerical equivalence for the `coefficients' of $A_2$ and $x$ to hold in $A^2(X)$ in order to have a solution to  \eqref{eq:ch1EImoZwPhiE1asympequiv}:
\begin{equation}\label{eq:Chowringequation}
\frac{\bo_1 (\bo_1 + 2\bo_2)}{\Theta \bo^2} \equiv \frac{\omega\Theta}{\omega^3/6}.
\end{equation}

\paragraph[Solving the numerical equivalence]  By\label{para:solveChoweq} adjunction, we have
\begin{equation}\label{eq:Thetasquared}
  \Theta^2 = \Theta p^\ast K_B
\end{equation}
\cite[(6.6), Section 6.2.6]{FMNT}. In  order for there to be a solution to \eqref{eq:Chowringequation}, the 1-cycles
\[
  \bo_1 (\bo_1 + 2\bo_2) = a^2 \Theta p^\ast K_B + 2a\Theta p^\ast \overline{H_B} = a\Theta (ap^\ast K_B + 2p^\ast \overline{H_B})
\]
and
\[
\omega\Theta = (u\Theta + p^\ast \wh{H_B})\Theta = \Theta (up^\ast K_B + p^\ast \wh{H_B})
\]
must be real scalar multiples of each other under numerical equivalence.  Two ways through which this can happen are:
\begin{itemize}
\item[(a)] $K_B, \overline{H_B}$ are not $\mathbb{R}$-multiples of each other under numerical equivalence as cycles on $B$, as is the case for $K_B, \wh{H_B}$; however, $ap^\ast K_B + 2p^\ast \overline{H_B}$ and $up^\ast K_B + p^\ast \wh{H_B}$ are $\mathbb{R}$-multiples of each other up to numerical equivalence;
\item[(b)] $K_B, \overline{H_B}, \wh{H_B}$ are  $\mathbb{R}$-scalar multiples of one another up to numerical equivalence as cycles on $B$, such as when $B$ is a Fano, Enriques or K3 surface.
\end{itemize}

In case (a), we can solve \eqref{eq:Chowringequation} by  setting
\[
  a\wh{H_B} = 2u\overline{H_B}
\]
and then taking $\overline{H_B}, \wh{H_B}$ to be suitable $\mathbb{R}$-multiples of each other up to numerical equivalence.
Since we would like to let $u\to 0$ as mentioned in \ref{para:Chowringequation}, this solution would force $a \to 0$ as well.  This amounts to taking limits in both $\bo$ and $\omega$, and corresponds to the approach taken by Yoshioka \cite{YosAS, YosPII} on elliptic surfaces and the author's joint work with Zhang in \cite{LZ2}.  However, we would like to find a solution to \eqref{eq:Chowringequation} for an \textit{arbitrary} polarisation $\bo$, and so  will not pursue this approach in this article.

In case (b), let us write
\begin{equation}\label{eq:KbequivhHb}
  K_B \equiv hH_B
\end{equation}
for some ample class $H_B$ on $B$ and some $h \in \mathbb{R}$.  We allow the possibility of $h$ being zero, as is the case when $B$ is an Enriques or a K3 surface.  Then we write
\[
  \overline{H_B} = bH_B, \, \wh{H_B} = vH_B \text{\quad for some $b,v>0$}.
\]
Then
\begin{equation}\label{eq:boandomega}
  \bo = a\Theta + bp^\ast H_B, \text{\quad} \omega = u\Theta + vp^\ast H_B.
\end{equation}
Since the fibration $p : X \to B$ is a local complete intersection \cite[Section 6.2.1]{FMNT}, the numerical equivalence \eqref{eq:KbequivhHb} on $B$ pulls back to the numerical equivalence
\[
  p^\ast K_B \equiv hp^\ast H_B
\]
on $X$ \cite[Example 19.2.3]{Fulton}.  Using these notations, we now have

\begin{lemsub}\label{eq:curveforomegalimit}
If the parameters $a,b,u,v>0$ satisfy
\begin{equation}\label{eq:limitcurve}
\frac{a(ha+2b)}{(ha+b)^2} = \frac{(hu+v)}{\tfrac{1}{6}u(h^2u^2 + 3huv + 3v^2)}.
\end{equation}
then the numerical equivalence \eqref{eq:Chowringequation} holds.
\end{lemsub}

\begin{proof}
Observe that
\begin{align*}
  \bo_1 (\bo_1 + 2\bo_2) &= a\Theta (ap^\ast K_B + 2p^\ast \overline{H_B}) \\
  &\equiv a(ha+2b)\Theta p^\ast H_B, \\
  \Theta \bo^2 &= \Theta (\bo_1 + \bo_2)^2 \\
  &= (h^2a^2 + 2hab + b^2)\Theta p^\ast H_B^2 \\
  &= (ha+b)^2 \Theta p^\ast H_B^2, \\
  \omega \Theta &\equiv hu \Theta p^\ast H_B + v\Theta p^\ast H_B \\
  &= (hu+v)\Theta p^\ast H_B, \\
  \tfrac{\omega^3}{6} &= \tfrac{1}{6} (\omega_1+\omega_2)^3 \\
  &= \tfrac{1}{6}\omega_1 (\omega_1^2 + 3\omega_1\omega_2 + 3\omega_2^2) \\
  &= \tfrac{1}{6}u(h^2u^2\Theta p^\ast H_B^2 + 3huv\Theta p^\ast H_B^2 + 3v^2 \Theta p^\ast H_B^2) \\
  &= \tfrac{1}{6}u(h^2u^2 + 3huv + 3v^2)\Theta p^\ast H_B^2.
\end{align*}
The numerical equivalence \eqref{eq:Chowringequation}  can now be rewritten as
\[
\frac{a(ha+2b)\Theta p^\ast H_B}{(ha+b)^2 \Theta p^\ast H_B^2} \equiv \frac{(hu+v)\Theta p^\ast H_B}{\tfrac{1}{6}u(h^2u^2 + 3huv + 3v^2)\Theta p^\ast H_B^2},
\]
which clearly holds if \eqref{eq:limitcurve} is satisfied.
\end{proof}

 For fixed $a,b>0$ and fixed $h \in \mathbb{R}$, if we ensure
 \begin{equation}\label{eq:constraint1}
 ha+2b>0 \text{\quad and\quad } ha+b \neq 0,
  \end{equation}
  and $u>0$ is bounded from above, then for any $v \gg 0$, we can always find some $u$ that satisfies \eqref{eq:limitcurve}.  Furthermore, as $v \to \infty$ we have $u=O(\tfrac{1}{v})$ subject to the constraint \eqref{eq:limitcurve}.  

\paragraph[Relation to the product threefold case]
When $h=0$, i.e.\ when the surface $B$ is  numerically $K$-trivial, the equation \eqref{eq:limitcurve} reduces to
\[
 \tfrac{b}{a} = uv,
\]
which is exactly the relation used in defining limit tilt stability on a product elliptic threefold $C \times B$ in \cite[2.4]{Lo14}, where $C$ is a smooth elliptic curve, $B$ a K3 surface, and $X$ is considered as a trivial fibration over $B$ via the second projection.

\section{Limit tilt stability}\label{sec:limittiltstabconstruction}

We define limit tilt stability in this section.  From now on through the end of the article, we will assume conditions (1), (2) and (3) in \ref{para:ellipfibdef}; in particular, we will fix $H_B$ and $h$ as in condition (3).

Recall from \eqref{eq:boandomega} that $u,v >0$ are two parameters in our notation for the polarisation
\[
\omega = u\Theta + vp^\ast H_B
\]
on the threefold $X$, where $H_B$ is an ample class on the base $B$.  In this section, we define a heart of t-structure that can be regarded as a `limit' of the heart $\Bc_\omega$ as $v \to \infty$ along the curve \eqref{eq:limitcurve} on the $vu$-plane.

\begin{lem}\label{lem:muwleftcomponent}
There exists an $m>0$ such that, for any $k\geq m$ and any effective divisor $D$ on $X$, we have $(\Theta p^\ast H_B + kf)D \geq 0$.
\end{lem}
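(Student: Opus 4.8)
The plan is to show that, for $k$ large, the $1$-cycle $\Theta p^\ast H_B + kf$ is \emph{nef}, meaning it pairs nonnegatively with every effective divisor on $X$; the cleanest route is to exhibit it as the intersection product of two nef divisor classes. The starting observation is the numerical identity $(p^\ast H_B)^2 = (H_B^2)\,f$, which holds because $p$ is flat and $p^\ast$ of the class of a point on $B$ is the fiber class $f$ (so $(p^\ast H_B)^2 = p^\ast(H_B^2) = H_B^2 f$). Consequently, for any real $s>0$,
\[
(\Theta + s\, p^\ast H_B)\cdot p^\ast H_B = \Theta p^\ast H_B + s\,(p^\ast H_B)^2 = \Theta p^\ast H_B + s (H_B^2)\, f .
\]
Choosing $s = k/H_B^2$ (note $H_B^2>0$ as $H_B$ is ample) rewrites the cycle in question as $\Theta p^\ast H_B + kf = (\Theta + \tfrac{k}{H_B^2} p^\ast H_B)\cdot p^\ast H_B$.

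Next I would invoke condition (3a): there is a $v'>0$ with $\Theta + v\, p^\ast H_B$ ample, hence nef, for all $v> v'$ (and nef for $v\geq v'$ by passing to the closure of the ample cone). Thus as soon as $k/H_B^2 \geq v'$, both factors $\Theta + \tfrac{k}{H_B^2} p^\ast H_B$ and $p^\ast H_B$ are nef divisor classes on the smooth projective threefold $X$. It then remains to apply the general fact that the intersection product $N_1 N_2$ of two nef divisors pairs nonnegatively with any effective divisor $D$ on $X$. By linearity this reduces to $D$ prime, where $N_1 N_2 D = (N_1|_D)(N_2|_D)$ is computed on (a resolution of) the surface $D$; restrictions of nef classes are nef, and the product of two nef divisors on a surface is nonnegative. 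Alternatively one approximates $N_1,N_2$ by ample $\mathbb{Q}$-divisors and uses $A_1 A_2 D > 0$ for ample $A_i$ and $D$ effective, then passes to the limit. Taking $m = v' H_B^2 > 0$ gives $(\Theta p^\ast H_B + kf) D \geq 0$ for all $k \geq m$ and all effective $D$, as desired.

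The main obstacle is precisely this positivity fact for the product of two nef divisors, where some care is needed when the prime divisor $D$ is singular; I would handle that by pulling back along the normalization $\nu : \widetilde D \to D$ (nef pulls back to nef, and the projection formula preserves the intersection number). As a fully self-contained alternative that avoids invoking the nef-product fact, I would instead decompose an arbitrary effective $D$ into prime components and treat vertical and horizontal primes separately. Writing $D \equiv \alpha \Theta + p^\ast G$ via assumption \eqref{eq:assumptioncohomring}, one has $\alpha = f\cdot D \geq 0$ because $f$ is a base-point-free (hence nef) curve class. For a vertical prime $D = p^\ast C$ the pairing equals $(H_B \cdot C)_B \geq 0$; for a horizontal prime $D \neq \Theta$ one restricts to the section $\Theta\cong B$, using $\Theta|_\Theta \equiv K_B$ (which follows from \eqref{eq:Thetasquared}) to see that $\alpha K_B + G \equiv \sigma^\ast D$ is effective on $B$, whence $(\Theta p^\ast H_B + kf)D \geq \alpha k \geq 0$; and the single remaining class $D=\Theta$ is handled directly by $(\Theta p^\ast H_B + kf)\Theta = (K_B\cdot H_B)_B + k$, which is nonnegative once $k \geq -(K_B\cdot H_B)_B$. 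Either route produces an explicit positive threshold $m$.
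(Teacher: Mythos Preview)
Your proof is correct. Both your main argument and the paper's run on the same underlying positivity principle---that a product of nef divisors intersected with an effective divisor is nonnegative---but you factor the $1$-cycle differently. The paper realises $\Theta p^\ast H_B + kf$ as a positive multiple of the \emph{square} $(\Theta + k' p^\ast H_B)^2$ for a suitable $k'$, which forces it to expand $\Theta^2 = \Theta p^\ast K_B \equiv h\,\Theta p^\ast H_B$ and then analyse the rational function $k' \mapsto \tfrac{(k')^2}{h+2k'}$ to see that its range covers $[m,\infty)$. Your factorisation $(\Theta + \tfrac{k}{H_B^2} p^\ast H_B)\cdot p^\ast H_B$ instead pairs an ample class with the fixed nef class $p^\ast H_B$, which sidesteps the $\Theta^2$ computation entirely and makes the threshold $m = v' H_B^2$ immediate; in this sense your route is the cleaner of the two. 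Your self-contained alternative via vertical/horizontal prime decomposition is also valid and gives an elementary proof that does not appeal to any general nef-product positivity; the key step there, that $\sigma^\ast D \equiv \alpha K_B + G$ is effective on $B$ for a horizontal prime $D \neq \Theta$, is exactly what makes $\Theta(p^\ast H_B)D = H_B\cdot \sigma^\ast D \geq 0$.
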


\begin{proof}
Choose any $m_0>0$ such that
\begin{itemize}
\item $m_0 > v'$ where $v'$ is as in assumption (3a) in \ref{para:ellipfibdef}, so that  $\Theta + kp^\ast H_B$ is ample on $X$ for any $k \geq m_0$.
\item  $h+2m_0>0$
\end{itemize}
  Then for any effective divisor $D$ on $X$ and any $k \geq m_0$,  we have $D(\Theta + kp^\ast H_B)^2 \geq 0$.  Moreover,
\begin{align*}
  (\Theta + kp^\ast H_B)^2 &= \Theta^2 + 2k\Theta p^\ast H_B + k^2 p^\ast H_B^2 \\
  &= (h+2k)\Theta p^\ast H_B + k^2 p^\ast H_B^2 \\
  &= (h+2k)(\Theta p^\ast H_B + \tfrac{k^2}{h+2k}p^\ast H_B^2).
\end{align*}
Note that $\tfrac{d}{dk} \, \tfrac{k^2}{h+2k} = \tfrac{2k(h+k)}{(h+2k)^2}$, so under our assumption on $k$, the derivative $\tfrac{d}{dk} \, \tfrac{k^2}{h+2k}$ is always positive.  Thus we can take $m$ in the lemma to be $\tfrac{m_0^2}{h+2m_0} H_B^2$.
\end{proof}

\begin{cor}\label{cor:petaeffective}
If $\eta \in A^1(B)$ and $p^\ast \eta$ is an effective divisor class in $A^1(X)$, then $H_B \eta \geq 0$, with equality if and only if $\eta =0$.
\end{cor}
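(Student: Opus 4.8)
The plan is to reduce the whole statement to positivity of $\eta$ on the base $B$ and the ampleness of $H_B$. The inequality $H_B\eta \geq 0$ can be read off directly from Lemma \ref{lem:muwleftcomponent} by a short intersection computation, whereas the equality statement requires the finer input that $p^\ast\eta$ being effective forces $\eta$ itself to be effective on $B$.

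First I would apply Lemma \ref{lem:muwleftcomponent} to the effective divisor $D = p^\ast\eta$ with any fixed $k \geq m$, obtaining $(\Theta p^\ast H_B + kf)\cdot p^\ast\eta \geq 0$. I then evaluate this number on $X$ using the decomposition \eqref{eq:assumptioncohomring} together with the standard facts $\Theta\cdot f = 1$ (the section meets each fiber once), $p^\ast H_B \cdot p^\ast\eta = (H_B\eta)f$, and $f\cdot p^\ast\eta = 0$; the last holds because $p^\ast\eta$ is pulled back from $B$ while $f$ is contracted by $p$. These collapse the expression to $H_B\eta$, so the Lemma yields $H_B\eta \geq 0$ at once. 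This step is a routine calculation.

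For the equivalence, only the implication $H_B\eta = 0 \Rightarrow \eta = 0$ needs work, since $\eta = 0 \Rightarrow H_B\eta = 0$ is immediate. Here I would show that $\eta$ is effective on $B$. Since $p$ is flat with integral fibers of arithmetic genus $1$, we have $p_\ast\OO_X = \OO_B$, so the projection formula gives $H^0(X, p^\ast\OO_B(\eta)) = H^0(B,\OO_B(\eta))$; as $p^\ast\eta$ is effective the left-hand side is nonzero, hence so is the right, and $\eta$ is effective. (Geometrically, any effective representative of $p^\ast\eta$ has zero intersection number with a fiber and therefore carries no horizontal component, so it is vertical and restricts along the section $\sigma$ to the effective class $\eta$.) Writing an effective representative as $\eta = \sum_i a_i C_i$ with $a_i > 0$ and $C_i$ prime, the ampleness of $H_B$ forces $H_B\cdot C_i > 0$ for each $i$, so $H_B\eta = 0$ is possible only when the sum is empty, i.e.\ $\eta = 0$.

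The part I expect to be the main obstacle is precisely this equality direction: the purely numerical Lemma sees only $H_B\eta \geq 0$ and cannot on its own distinguish $\eta = 0$ from a nonzero class orthogonal to $H_B$. The essential extra ingredient is the transfer of effectivity from $X$ down to $B$, for which $p_\ast\OO_X = \OO_B$ is the crucial fact. I note in passing that once effectivity of $\eta$ is established, the inequality $H_B\eta \geq 0$ also follows from ampleness alone, so both halves of the statement can in fact be unified through the single observation that $\eta$ is effective.
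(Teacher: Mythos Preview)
Your proof is correct. For the inequality $H_B\eta \geq 0$, your direct appeal to the conclusion of Lemma \ref{lem:muwleftcomponent}, combined with the vanishing $f \cdot p^\ast\eta = 0$, is effectively what the paper does as well: the paper reaches back into the computation inside that lemma's proof to identify $(\Theta + kp^\ast H_B)^2 \cdot p^\ast\eta$ as a positive multiple of $H_B\eta$, which is the same identity you are using one step removed.

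For the equality direction your route genuinely differs. The paper stays entirely within intersection theory: since $\Theta + kp^\ast H_B$ is ample and $p^\ast\eta$ is effective, the vanishing $(\Theta + kp^\ast H_B)^2 \cdot p^\ast\eta = 0$ forces $p^\ast\eta = 0$, and then the projection formula $p_\ast(\Theta \cdot p^\ast\eta) = \eta$ gives $\eta = 0$. Thus the inequality and the equality case both come out of a single ample-squared-times-effective positivity statement on $X$. Your argument instead transports effectivity from $X$ down to $B$ (via $p_\ast\OO_X = \OO_B$ and the resulting isomorphism on global sections, or alternatively via the vertical-divisor picture and restriction along $\sigma$), and then invokes ampleness of $H_B$ on the surface. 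What you gain is the intermediate statement that $\eta$ is itself effective on $B$; the paper does not establish this here, though it proves an analogous pushforward-to-$B$ effectivity result later (Lemma \ref{lem:1dimshch2etaeffective}) by an argument close in spirit to your geometric alternative. What the paper's route buys is robustness: it uses only numerical intersection data, so it is indifferent to whether one is working up to linear or numerical equivalence, whereas your sections argument implicitly uses that the effective representative is linearly equivalent to $p^\ast\eta$.
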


\begin{proof}
Suppose $\eta \in A^1(B)$ and $p^\ast \eta$ is an effective divisor class on $X$.  Fix any $k>0$ such that $h+2k>0$ and $\Theta + kp^\ast H_B$ is an ample class on $X$.  Then from the proof of Lemma \ref{lem:muwleftcomponent}, we know $(\Theta + kp^\ast H_B)^2 p^\ast \eta$ is a positive multiple of $(\Theta p^\ast H_B + \tfrac{k^2}{h+2k}p^\ast H_B^2)p^\ast \eta = \Theta (p^\ast H_B)(p^\ast \eta)=H_B\eta$.  Hence $H_B \eta \geq 0$, with equality iff $p^\ast \eta=0$.  Since $p^\ast \eta = 0$ implies $0=p_\ast (\Theta (p^\ast \eta))= \eta$ by the projection formula, the corollary follows.
\end{proof}

As a consequence of Lemma \ref{lem:muwleftcomponent}, we can find some $m>0$ such that the following slope-like function  on $\Coh (X)$ has the Harder-Narasimhan property:
\begin{equation}
  \mu_{\Theta p^\ast H_B + mf}(E) = \frac{(\Theta p^\ast H_B + mf)\ch_1(E)}{\ch_0(E)} \text{\quad for $E \in \Coh (X)$}.
\end{equation}
(From the proof of Lemma \ref{lem:muwleftcomponent}, this function is merely a multiple of the usual slope function with respect to some ample class on $X$.)

\begin{lem}\label{lem:4-1n4-3paper14analogue}
Take any $u_0>0$ and $F \in \Coh (X)$.  Suppose $m>0$ is as in Lemma \ref{lem:muwleftcomponent} and $\omega$ is of the form \eqref{eq:boandomega}.
\begin{itemize}
\item[(1)] The following are equivalent:
  \begin{itemize}
  \item[(a)] There exists $v_0>0$ such that $F \in \Fc_\omega$ for all $(v,u) \in (v_0,\infty) \times (0,u_0)$.
  \item[(b)] There exists $v_0>0$ such that, for every nonzero subsheaf $A \subseteq F$, we have $\mu_\omega (A) \leq 0$ for all $(v,u) \in (v_0,\infty)\times (0,u_0)$.
  \item[(c)] For every nonzero subsheaf $A \subseteq F$, either (i) $\mu_f (A) < 0$, or (ii) $\mu_f (A)=0$ and \\ $\mu_{\Theta p^\ast H_B + mf}(A) \leq 0$.
  \end{itemize}
\item[(2)] The following are equivalent:
  \begin{itemize}
  \item[(a)] There exists $v_0>0$ such that $F \in \Tc_\omega$ for all $(v,u) \in (v_0,\infty) \times (0,u_0)$.
  \item[(b)]  There exists $v_0>0$ such that, for every nonzero sheaf quotient $F \twoheadrightarrow A$, we have $\mu_\omega (A) > 0$ for all $(v,u) \in (v_0,\infty) \times (0,u_0)$.
  \item[(c)] For any nonzero sheaf quotient $F \twoheadrightarrow A$, either (i) $\mu_f (A)>0$, or (ii) $\mu_f(A)=0$ and $\mu_{\Theta p^\ast H_B + mf}(A) >0$.
  \end{itemize}
\end{itemize}
\end{lem}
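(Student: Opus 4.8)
The plan is to reduce both parts to the single intersection identity
\[
\omega^2\ch_1(A) = (hu^2+2uv)\,\bigl(\Theta p^\ast H_B\cdot\ch_1(A)\bigr) + v^2\bigl(f\cdot\ch_1(A)\bigr)H_B^2,
\]
which I would obtain by expanding $\omega = u\Theta + vp^\ast H_B$ from \eqref{eq:boandomega} and using $\Theta^2=\Theta p^\ast K_B\equiv h\Theta p^\ast H_B$ (from \eqref{eq:Thetasquared} and \eqref{eq:KbequivhHb}) together with $(p^\ast H_B)^2\Theta = H_B^2$ and $(p^\ast H_B)^2 p^\ast S=0$. Writing $P_A = f\cdot\ch_1(A)$ and $Q_A=\Theta p^\ast H_B\cdot\ch_1(A)$, this displays $\omega^2\ch_1(A)$ as a quadratic in $v$ whose leading coefficient is controlled by $\mu_f(A)=P_A/\rank A$ and whose subleading part (when $P_A=0$) is controlled by $Q_A$, visible through $\mu_{\Theta p^\ast H_B+mf}(A) = (Q_A+mP_A)/\rank A$. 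I would also record that $P_A,Q_A\in\ZZ$, since $f$ and $\Theta p^\ast H_B$ are integral cycles. The equivalence (a)$\iff$(b) then needs only the standard description of the torsion pair: $F\in\Fc_\omega$ exactly when its maximal $\mu_\omega$-destabilizing subsheaf has slope $\leq 0$, i.e.\ when every nonzero subsheaf $A$ satisfies $\mu_\omega(A)\leq 0$ (dually for $\Tc_\omega$ with quotients). As this holds for each fixed $(v,u)$, commuting ``$\exists v_0\,\forall(v,u)$'' with ``$\forall A$'' yields (a)$\iff$(b) at once.

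For (b)$\Rightarrow$(c) I would argue one subsheaf at a time. Assuming (b), a torsion subsheaf would force $\mu_\omega=+\infty>0$, so $F$ is torsion-free and every nonzero $A$ has $\rank A>0$. Fixing such an $A$ and letting $v\to\infty$ in the identity, the sign of $\omega^2\ch_1(A)$ is eventually that of $v^2 P_A H_B^2$, so $\mu_\omega(A)\leq 0$ for all large $v$ forces $P_A\leq 0$. If $P_A<0$ we are in case (i); if $P_A=0$ the identity collapses to $\omega^2\ch_1(A)=(hu^2+2uv)Q_A$, and positivity of $hu^2+2uv$ for large $v$ forces $Q_A\leq 0$, i.e.\ $\mu_{\Theta p^\ast H_B+mf}(A)=Q_A/\rank A\leq 0$, which is case (ii).

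The converse (c)$\Rightarrow$(b) is the crux, and the uniformity of $v_0$ across the infinitely many subsheaves $A$ is the main obstacle: the per-subsheaf limit argument only produces a threshold depending on $A$. The device I would use to gain uniformity is the Harder-Narasimhan property of the slope-like function $\mu_{\Theta p^\ast H_B+mf}$ (guaranteed by Lemma \ref{lem:muwleftcomponent}), which supplies a finite maximal slope $C_1:=\mu_{\Theta p^\ast H_B+mf,\max}(F)$ bounding all subsheaves: $Q_A+mP_A\leq (\rank A)C_1$, hence $Q_A\leq C_2-mP_A$ with $C_2:=\max\{0,(\rank F)C_1\}$ independent of $A$. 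Condition (c) gives $P_A\leq 0$ for every subsheaf, with $Q_A\leq 0$ whenever $P_A=0$. When $P_A=0$ the identity gives $\omega^2\ch_1(A)=(hu^2+2uv)Q_A\leq 0$ as soon as $v$ is large enough that $hu^2+2uv>0$ for all $u\in(0,u_0)$. When $P_A<0$, integrality forces $P_A\leq -1$, so substituting $Q_A\leq C_2-mP_A$ yields
\[
\omega^2\ch_1(A)\leq (hu^2+2uv)(C_2+m)-v^2 H_B^2;
\]
since $hu^2+2uv=O(v)$ uniformly in $u\in(0,u_0)$ while $v^2H_B^2$ is quadratic, the right-hand side is negative beyond an explicit threshold depending only on $C_2,m,H_B^2,h,u_0$. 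Taking $v_0$ to be this threshold (enlarged to also ensure $hu^2+2uv>0$) establishes (b) with a uniform $v_0$.

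Part (2) is then proved by the identical computation with all inequalities reversed and subsheaves replaced by quotients, using the minimal slope $\mu_{\Theta p^\ast H_B+mf,\min}(F)$ to bound $Q_A$ from below. Here torsion quotients automatically satisfy $\mu_\omega=+\infty>0$, and the only additional point is that the strict inequality in case (2)(ii) is robust: for a quotient with $P_A=0$ the integrality $Q_A\in\ZZ$ upgrades $\mu_{\Theta p^\ast H_B+mf}(A)>0$ to $Q_A\geq 1$, whence $\omega^2\ch_1(A)=(hu^2+2uv)Q_A>0$ uniformly for large $v$, while the case $P_A>0$ is dual to the estimate above.
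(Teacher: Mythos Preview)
Your proposal is correct and follows essentially the same route as the paper: expand $\omega^2$ using $\Theta^2\equiv h\Theta p^\ast H_B$, read off (b)$\Leftrightarrow$(c) from the asymptotics in $v$, and obtain the uniform threshold in (c)$\Rightarrow$(b) from the Harder--Narasimhan property of $\mu_{\Theta p^\ast H_B+mf}$ together with the integrality of $f\cdot\ch_1$. The only cosmetic difference is that the paper packages the computation as the single decomposition
\[
\mu_\omega(A)=u(hu+2v)\,\mu_{\Theta p^\ast H_B+mf}(A)+\bigl(v^2H_B^2-u(hu+2v)m\bigr)\mu_f(A),
\]
which makes the positivity of the coefficients for $v\gg 0$ immediate, whereas you work directly with $P_A,Q_A$ and recover the same bound via $Q_A+mP_A\leq(\rank A)C_1$; the two arguments are equivalent. (A minor remark: the integrality of $Q_A$ you invoke in part~(2) is not actually needed---once $P_A=0$ and $Q_A>0$, the inequality $(hu^2+2uv)Q_A>0$ already holds uniformly for $v$ large enough that $hu^2+2uv>0$ on $(0,u_0)$.)
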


\begin{proof}
The proofs for parts (1) and (2) are essentially the same as those for \cite[Lemmas 4.1, 4.3]{Lo14}, so we will only point out the modifications needed, using  part (1) as an example.

For part (1), the directions (a) $\Leftrightarrow$ (b) are clear.  Observe that
\begin{align}
\omega^2 &= (u\Theta + vp^\ast H_B)^2 \notag\\
&= u^2\Theta^2 + 2uv\Theta p^\ast H_B + v^2 p^\ast H_B^2 \notag\\
&\equiv hu^2 \Theta p^\ast H_B + 2uv \Theta p^\ast H_B + v^2 p^\ast H_B^2 \notag\\
&= u(hu+2v)\Theta p^\ast H_B + v^2 p^\ast H_B^2 \label{eq:omegasquared}\\
&= u(hu+2v)(\Theta p^\ast H_B +mf) + (v^2H_B^2 -u(hu+2v)m)f. \notag
\end{align}
Hence  for any $A \in \Coh (X)$
\begin{align}
 \mu_\omega (A) &= \tfrac{\omega^2\ch_1(A)}{\ch_0(A)} \notag\\
 &= u(hu+2v)\mu_{\Theta p^\ast H_B+mf}(A) + (v^2H_B^2-u(hu+2v)m)\mu_f(A) \label{eq:muomegadecomposition}\\
 &= O(v)\mu_{\Theta p^\ast H_B+mf}(A) + O(v^2)\mu_f(A) \text{\quad as $v \to\infty$ while $u$ is fixed}.\notag
\end{align}
Since the coefficients of $\mu_{\Theta p^\ast H_B + mf}(A)$ and $\mu_f(A)$ above are positive for $v \gg 0$, the implication (b) $\Rightarrow$ (c)  follows easily.  To see  (c) $\Rightarrow$ (b), use the same argument as in \cite[Lemma 4.1]{Lo14} and note that both $\mu_f, \mu_{\Theta p^\ast H_B + mf}$ have the HN property on $\Coh (X)$.
\end{proof}

\paragraph Since the conditions in parts (1)(c) and (2)(c) of Lemma \ref{lem:4-1n4-3paper14analogue} are independent of the polarisation $\omega = u\Theta + vp^\ast H_B$, we can define the following categories:
\begin{itemize}
  \item $\Tc^l$, the extension closure of all coherent sheaves satisfying  (2)(c) in Lemma \ref{lem:4-1n4-3paper14analogue},
  \item $\Fc^l$, the extension closure of all coherent sheaves satisfying  (1)(c) in Lemma \ref{lem:4-1n4-3paper14analogue},
\end{itemize}
and subsequently  the extension closure
\begin{equation*}
  \Bl = \langle \Fc^l [1], \Tc^l \rangle.
\end{equation*}
As in the case of the product elliptic threefold \cite[Remark 4.4(vi)]{Lo14}, it is easy to check that the categories $\Tc^l, \Fc^l$ can equivalently be defined as follows, for any fixed $a,b>$ satisfying \eqref{eq:constraint1} and for $v,u>0$:
\begin{itemize}
  \item $\Tc^l$: the extension closure of all coherent sheaves $F$ on $X$ such that $F \in \Tc_\omega$ for all $v \gg 0$ subject to \eqref{eq:limitcurve},
  \item $\Fc^l$: the extension closure of all coherent sheaves $F$ on $X$ such that $F \in \Fc_\omega$ for all $v \gg 0$ subject to \eqref{eq:limitcurve}
\end{itemize}

\paragraph The following \label{para:TlFlbasicproperties} properties  of $\Tc^l, \Fc^l$ and $\Bl$ are immediate.  We will omit their proofs since they are analogous  to their counterparts in the product case (see \cite[Remark 4.4]{Lo14}):
\begin{itemize}
\item[(i)] All the torsion sheaves on $X$ are contained in $\Tc^l$.
\item[(ii)] $\Fc^l$ is contained in the category of torsion-free sheaves on $X$.
\item[(iii)] $W_{0,\wh{\Phi}} \subset \Tc^l$.
\item[(iv)] $f\ch_1 (E) \geq 0$ for every $E \in \Bl$.
\item[(v)] $\Fc^l \subset W_{1,\wh{\Phi}}$; this follows from Lemma \ref{lem:paper14lem4-6analogue} below.
\end{itemize}

\begin{lem}\label{lem:paper14lem4-6analogue}
$(\Tc^l, \Fc^l)$ is a torsion pair in $\Coh (X)$, and the category $\Bl$ is the heart of a t-structure on $D^b(X)$.
\end{lem}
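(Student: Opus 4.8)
The plan is to prove the torsion pair statement first; the assertion that $\Bl = \langle \Fc^l [1], \Tc^l \rangle$ is the heart of a t-structure on $D^b(X) = D^b(\Coh (X))$ then follows at once from the Happel--Reiten--Smal\o\ tilting theory recalled in \ref{para:torsionpairtilting} (see \cite{HRS}), since $\Coh (X)$ is noetherian and $\Bl$ is by construction the tilt of $\Coh (X)$ at $(\Tc^l, \Fc^l)$. So all the content lies in verifying that $(\Tc^l, \Fc^l)$ is a torsion pair. Throughout I will exploit the two slope-like functions $\mu_f$ and $\mu_{\Theta p^\ast H_B + mf}$ on $\Coh (X)$, both of which have the Harder--Narasimhan property (the former because $f$ is nef, the latter by the choice of $m$ in Lemma \ref{lem:muwleftcomponent}), and the descriptions in Lemma \ref{lem:4-1n4-3paper14analogue}: $\Tc^l$ is generated by sheaves all of whose nonzero quotients $A$ satisfy condition (2)(c), and $\Fc^l$ by sheaves all of whose nonzero subsheaves $A$ satisfy (1)(c). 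Writing $r=\ch_0$, $d = f\ch_1$, $e = (\Theta p^\ast H_B + mf)\ch_1$, condition (2)(c) for a single object of positive rank reads $(d,e) >_{\mathrm{lex}} (0,0)$ and condition (1)(c) reads $(d,e) \leq_{\mathrm{lex}} (0,0)$, while on torsion sheaves (rank $0$) (2)(c) holds automatically and $d \geq 0$, $e \geq 0$ by Lemma \ref{lem:muwleftcomponent}.

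For the Hom-vanishing axiom I would reduce, using left-exactness of $\Hom$ in each variable (which propagates the vanishing through extensions), to checking $\Hom_{\Coh (X)}(T,F)=0$ for a generator $T$ of $\Tc^l$ and a generator $F$ of $\Fc^l$. If a nonzero map $T \to F$ existed, its image $A$ would be simultaneously a nonzero quotient of $T$ and a nonzero subsheaf of the torsion-free sheaf $F$; then (2)(c) for $T$ forces $(d_A,e_A) >_{\mathrm{lex}} (0,0)$ while (1)(c) for $F$ forces $(d_A, e_A) \leq_{\mathrm{lex}} (0,0)$, a contradiction. Hence $\Hom(\Tc^l, \Fc^l)=0$, which in particular gives $\Fc^l \subseteq (\Tc^l)^\circ$.

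For the decomposition axiom I would show that the class of sheaves satisfying (2)(c) is closed under quotients and extensions in $\Coh (X)$, so that it already coincides with its extension closure $\Tc^l$; Polishchuk's lemma \cite[Lemma 1.1.3]{Pol} then makes $\Tc^l$ a torsion class in the noetherian category $\Coh (X)$, producing the torsion pair $(\Tc^l, (\Tc^l)^\circ)$. Closure under quotients is immediate, since a quotient of a quotient is a quotient. Closure under extensions is the mediant computation: by additivity of $r,d,e$ along a short exact sequence, an extension of two objects each satisfying $(d,e) \geq_{\mathrm{lex}} (0,0)$ (with strict inequality in positive rank, and the nonnegativity of $d,e$ on torsion supplied by Lemma \ref{lem:muwleftcomponent}) again satisfies $(d,e) >_{\mathrm{lex}} (0,0)$ once it has positive rank. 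It then remains to prove $(\Tc^l)^\circ \subseteq \Fc^l$: given $F \in (\Tc^l)^\circ$, it is torsion-free (all torsion sheaves lie in $\Tc^l$), and from any subsheaf $A \subseteq F$ violating (1)(c) I would extract a nonzero subobject lying in $\Tc^l$, contradicting $F \in (\Tc^l)^\circ$.

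The main obstacle is exactly this extraction at the boundary $\mu_f(A)=0$. When $\mu_f(A)>0$, the maximal $\mu_f$-destabilising subsheaf of $A$ is $\mu_f$-semistable of positive slope, so every one of its quotients has positive fiber slope and it visibly satisfies (2)(c). The subtle case is $\mu_f(A)=0$ with $\mu_{\Theta p^\ast H_B + mf}(A)>0$: here one first reduces to $A$ being $\mu_f$-semistable of slope $0$ (else its top $\mu_f$-HN piece has positive slope and the previous case applies), then passes to the abelian subcategory $\mathcal{P}_0$ of $\mu_f$-semistable slope-$0$ sheaves, on which $\mu_{\Theta p^\ast H_B + mf}$ restricts to a genuine slope function with the HN property (its objects being torsion-free). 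The maximal $\mu_{\Theta p^\ast H_B + mf}$-destabilising subobject $A_1 \subseteq A$ in $\mathcal{P}_0$ is the desired member of $\Tc^l$, but checking this requires the delicate point that a $\Coh (X)$-quotient $B$ of $A_1$ whose fiber slope is $0$ is automatically $\mu_f$-semistable of slope $0$ (hence a $\mathcal{P}_0$-quotient of $A_1$), so that $\mu_{\Theta p^\ast H_B + mf}(B) \geq \mu_{\Theta p^\ast H_B + mf}(A_1)>0$; this is where the semistability of $A$ and the exactness of $\mathcal{P}_0 \hookrightarrow \Coh (X)$ are used. Equivalently, and perhaps more cleanly, one may assemble the torsion $4$-tuple $(\Tc_{>0}, \mathcal{P}_0^{>0}, \mathcal{P}_0^{\leq 0}, \Fc_{<0})$ obtained from the $\mu_f$-HN torsion triple by refining its slope-$0$ part $\mathcal{P}_0$ through $\mu_{\Theta p^\ast H_B + mf}$, and read off $(\Tc^l, \Fc^l)$ by grouping the first two classes against the last two as in \cite[Section 2.2]{Pol2}; in either packaging the $\mathcal{P}_0$-analysis is the crux, exactly as in the product case \cite{Lo14}.
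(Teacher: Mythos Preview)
Your proposal is correct and follows the same approach as the paper, which simply defers to \cite[Lemma 4.6]{Lo14} after recording the substitution $\mu^\ast \mapsto \mu_{\Theta p^\ast H_B + mf}$ (and the corresponding change of limit curve); your sketch faithfully unpacks that argument, including the delicate boundary case $\mu_f = 0$. One small caution: the phrase ``exactness of $\mathcal{P}_0 \hookrightarrow \Coh(X)$'' is not literally correct (cokernels in $\Coh(X)$ need not be torsion-free), but your alternative packaging via the torsion $4$-tuple $(\Tc_{>0}, \mathcal{P}_0^{>0}, \mathcal{P}_0^{\leq 0}, \Fc_{<0})$ and \cite{Pol2} is exactly the clean way to handle this and is what makes the argument go through.
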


\begin{proof}
For any $A \in \Coh (X)$, the decomposition of $\mu_\omega$ in \eqref{eq:muomegadecomposition} gives
\[
 \mu_\omega (A) = O(1) \mu_{\Theta p^\ast H_B+mf}(A) + O(v^2)\mu_f(A) \text{\quad as $v \to \infty$ along \eqref{eq:limitcurve}}.
\]
The proof of \cite[Lemma 4.6]{Lo14} carries over if we replace $s, t$ by $v, u$, replace the curve $ts = \alpha$ by \eqref{eq:limitcurve}, and replace the function $\mu^\ast$ by $\mu_{\Theta p^\ast H_B+mf}$ in that proof.
\end{proof}

\begin{lem}\label{lem:paper14lem4-7analogue}
For any  $a,b > 0$ satisfying \eqref{eq:constraint1}, and for any $E \in \Bl$, we have
\[
  \oZw (E) \in -i\mathbb{H}_0 \text{\quad for $v \gg 0$}
\]
whenever $u,v>0$ satisfy the constraint \eqref{eq:limitcurve}.
\end{lem}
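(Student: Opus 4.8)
The plan is to reduce the statement to the known containment $\oZw(\Bc_\omega) \subseteq -i\mathbb{H}_0$ for ordinary tilt hearts, proved in \cite[Remark 3.3.1]{BMT1}, by showing that each individual object of $\Bl$ already lies in the ordinary tilt $\Bc_\omega$ once $v$ is large enough along the curve \eqref{eq:limitcurve}. Since $\oZw$ involves only the untwisted Chern character (the $B$-field plays no role here) and $\Bc_\omega = \langle \Fc_\omega[1], \Tc_\omega\rangle$ is exactly the heart to which \cite[Remark 3.3.1]{BMT1} applies, the whole lemma follows once the inclusion $E \in \Bc_\omega$ is established for the given $E$ and all $v \gg 0$.

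First I would unwind the two hearts cohomologically. By Lemma \ref{lem:paper14lem4-6analogue}, $(\Tc^l, \Fc^l)$ is a torsion pair and $\Bl = \langle \Fc^l[1], \Tc^l\rangle$ is a tilted heart, so any $E \in \Bl$ satisfies $H^0(E) \in \Tc^l$, $H^{-1}(E) \in \Fc^l$, and $H^i(E)=0$ for $i \neq 0,-1$, where $H^i$ denotes cohomology with respect to the standard t-structure on $\Coh (X)$. On the other side, $\Bc_\omega$ is characterised by the identical shape: $E \in \Bc_\omega$ iff $H^0(E) \in \Tc_\omega$, $H^{-1}(E) \in \Fc_\omega$, and the remaining cohomology sheaves vanish. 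Thus it suffices to show that, for $v \gg 0$ on \eqref{eq:limitcurve}, the two specific sheaves $H^0(E) \in \Tc^l$ and $H^{-1}(E) \in \Fc^l$ land in $\Tc_\omega$ and $\Fc_\omega$ respectively.

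The key point is that membership in the extension-closed category $\Tc^l$ (resp.\ $\Fc^l$) already forces membership in $\Tc_\omega$ (resp.\ $\Fc_\omega$) for all sufficiently large $v$. Indeed, by the equivalent description of $\Tc^l, \Fc^l$ recorded after Lemma \ref{lem:4-1n4-3paper14analogue}, each generating sheaf $F$ of $\Tc^l$ lies in $\Tc_\omega$ for $v \gg 0$ subject to \eqref{eq:limitcurve}. Since $\Tc_\omega$ is a torsion class, hence closed under extensions, and since a finite set of thresholds can be combined by taking their maximum, the class of sheaves that lie in $\Tc_\omega$ for $v \gg 0$ is itself closed under extensions; therefore it coincides with $\Tc^l$, and every object of $\Tc^l$ lies in $\Tc_\omega$ once $v$ passes some object-dependent threshold. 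The same argument, with $\Fc_\omega$ a torsion-free class (closed under extensions and subobjects), gives $\Fc^l \subseteq \Fc_\omega$ for $v \gg 0$. Applying this to $H^0(E)$ and $H^{-1}(E)$ and taking the larger of the two thresholds yields $E \in \Bc_\omega$ for all $v \gg 0$ on \eqref{eq:limitcurve}, whence $\oZw(E) \in -i\mathbb{H}_0$ by \cite[Remark 3.3.1]{BMT1}.

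The main obstacle is precisely this threshold bookkeeping: the ``$v \gg 0$'' in the conclusion is permitted to depend on $E$, but I must be certain that passing from the generators of the extension closures $\Tc^l, \Fc^l$ to an arbitrary object costs only finitely many thresholds, which is exactly what the torsion/torsion-free class structure guarantees. If one preferred to avoid invoking the cohomological description of $\Bc_\omega$ directly, an equivalent route is to use additivity of $\oZw$ in $K$-theory on the triangle $H^{-1}(E)[1] \to E \to H^0(E)$, apply \cite[Remark 3.3.1]{BMT1} separately to $H^0(E) \in \Tc_\omega \subseteq \Bc_\omega$ and to $H^{-1}(E)[1] \in \Bc_\omega$, and then observe that $-i\mathbb{H}_0 = \{0\} \cup \{X+iY : X>0\} \cup \{iY : Y>0\}$ is closed under addition; this gives $\oZw(E) = \oZw(H^0(E)) + \oZw(H^{-1}(E)[1]) \in -i\mathbb{H}_0$ as well.
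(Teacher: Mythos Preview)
Your proof is correct and follows the same route as the paper: show that $E\in\Bl$ implies $E\in\Bc_\omega$ for $v\gg 0$ along the curve \eqref{eq:limitcurve}, then invoke \cite[Remark~3.3.1]{BMT1}. The paper compresses your cohomological unwinding and the extension-closure bookkeeping into a single citation of Lemma~\ref{lem:4-1n4-3paper14analogue}, but the underlying argument is identical; your alternative additivity route is also fine but unnecessary once $E\in\Bc_\omega$ is established.
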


\begin{proof}
Take any $E \in \Bl$.  From Lemma \ref{lem:4-1n4-3paper14analogue}, we know there exists some $v_0>0$ such that, for any $u>0, v>v_0$ satisfying \eqref{eq:limitcurve}, we have $E \in \Bc_\omega$, in which case $\oZw (E) \in -i\mathbb{H}_0$.
\end{proof}

\paragraph[$\oZl$-stability on $\Bl$ as asymptotically polynomial stability] Lemma \ref{lem:paper14lem4-7analogue} allows \label{para:oZlstabasymppolystab} us to define the notion of $\oZl$-stability on $\Bl$ for any fixed $a, b$ satisfying \eqref{eq:constraint1}.  Take any $F \in \Bl$, and suppose $u,v>0$ satisfy \eqref{eq:limitcurve}.  Supposing $\ch(F)$ is of the form
\begin{equation}\label{eq:chF}
  \ch (F) = \begin{pmatrix} \wt{n} & p^\ast \wt{S} & \wt{a}f \\
  \wt{x}\Theta & \Theta p^\ast \wt{\eta} & \wt{s}
  \end{pmatrix},
\end{equation}
we have
\begin{align*}
  \tfrac{\omega^2}{2} \ch_1(F) &= \tfrac{1}{2}(u(hu+2v)\Theta p^\ast H_B + v^2 p^\ast H_B^2)(\wt{x}\Theta + p^\ast \wt{S}) \text{\quad by \eqref{eq:omegasquared}}\\
  &= \tfrac{1}{2}(hu(hu+2v)+v^2)\wt{x}\Theta p^\ast H_B^2 + \tfrac{1}{2}u(hu+2v)\Theta p^\ast (H_B \wt{S}), \\
  \omega \ch_2(F) &= (u\Theta + vp^\ast H_B)(\Theta p^\ast \wt{\eta} + \wt{a}f) \\
  &= (hu+v)\Theta p^\ast (H_B\wt{\eta}) + u\wt{a}
\end{align*}
giving us
\begin{align}
  \oZw (F) &= \tfrac{\omega^2}{2}\ch_1(F) + i(\omega\ch_2(F) - \tfrac{\omega^3}{6}\ch_0(F))\notag \\
  &= \left( \tfrac{hu(hu+2v)+v^2}{2}H_B^2\wt{x}+\tfrac{u(hu+2v)}{2}H_B\wt{S}\right) \label{eq:oZwFcomplete}\\
  &\hspace{2cm} + i \left( (hu+v)H_B\wt{\eta} + u\wt{a} - \tfrac{u(h^2u^2+3huv+3v^2)}{6}H_B^2\wt{n}\right).
\end{align}
As $v \to \infty$ under the constraint \eqref{eq:limitcurve}, we have
\begin{equation}\label{eq:oZwFOvform}
  \oZw (F) = O(v^2)H_B^2\wt{x} + O(1)H_B\wt{S} + i \left( O(v)H_B\wt{\eta} + O(\tfrac{1}{v})\wt{a}-O(v)H_B^2\wt{n} \right).
\end{equation}
Therefore, for $v \gg 0$ we can define a function $\phi (F)(v)$ by the relation
\begin{equation}\label{eq:Fphasedefinition}
  \oZw (F) \in \mathbb{R}_{>0}e^{i\pi\phi (F)(v)} \text{\quad  if $\oZw (F) \neq 0$},
\end{equation}
while we set  $\phi (F)(v)=\tfrac{1}{2}$ in case of $\oZw (F) =0$.  Here, $\phi (F)(v)$ is a function germ
\[
  \phi (F) : \mathbb{R} \to (0,1]
\]
as in the definition of Bayer's polynomial stability  \cite[Section 2.1]{BayerPBSC}.  Even though $u$ is only an implicit function in $v$ under the relation \eqref{eq:limitcurve}, for $v \gg 0$ we can write $u$ as a function in $v$ by requiring $u>0$, in which case we have $u = O(\tfrac{1}{v})$ as mentioned in \ref{para:solveChoweq}.

In fact, we can write $u$ as a  power series in $\tfrac{1}{v}$ that is convergent on $v>v_0$, for some fixed $v_0 \in \mathbb{R}_{>0}$.  To see this, note that $u$ is considered as an implicit function in $v$ via \eqref{eq:limitcurve}, which can be rewritten as
\begin{equation}\label{eq2}
  \tfrac{A}{6h}u(h^2u^2w^2 + 2huw+3)-\tfrac{1}{h}w - w^2u=0
\end{equation}
if we set $A = \frac{a(ha+2b)}{(ha+b)^2}$ and make the change of variable $w=\tfrac{1}{v}$.  Writing $F(u,w)$ to denote the left-hand side of \eqref{eq2}, we see that $F(0,0)=0$ and $F(u,w)$ contains the term $\tfrac{3A}{6h}u$ which is a nonzero multiple of a positive power of $u$.  By \cite[Theorem 1.1, Remark 1.2(2)]{KollarLRS}, $u$ is thus a convergent Puiseux series in $w$, which in our case is a power series.  That is, $u$ is a convergent power series in $\tfrac{1}{v}$.

As a result, for any nonzero objects $E, F$ in $\Bl$, the polynomial phase functions $\phi (E), \phi (F)$ are comparable with respect to $\prec$ \cite[Lemma 4.3]{Lo20}: Given two objects $E, F \in \Bl$, we will write
$\phi (E) \prec \phi (F)$ (resp.\ $\phi (E) \preceq \phi (F)$) if $\phi (E)(v) < \phi (F)(v)$ (resp.\ $\phi (E)(v) \leq \phi (F)(v)$) as $v \to \infty$ along \eqref{eq:limitcurve}.  This allows  us  to define the following notion of stability, which we can think of as an `asymptotically polynomial stability'.

\begin{defn}
 We say an object $F \in \Bl$ is  $\oZl$-stable or limit tilt stable (resp.\ $\oZl$-semistable or limit tilt semistable) if, for  every $\Bl$-short exact sequence of the form
\[
0 \to M \to F \to N \to 0
\]
where $M,N\neq 0$, we have
\[
  \phi (M) \prec \phi (N) \text{\quad (resp.\ $\phi (M) \preceq \phi (N)$)}.
\]
\end{defn}

Even though  tilt stability and limit tilt stability both use the formula of   $\oZl$ for the central charge, there is a fundamental difference between them.  Tilt stability, initially defined in \ref{para:slopetiltstab}  and reformulated in \ref{para:tiltstab2}, uses a fixed ample class $\omega$ in the central charge $\oZl$.  Therefore, in the case of tilt stability, we consider the central charge  $\oZl$  as a group homomorphism $K(X) \to \mathbb{C}$.

In contrast, even though limit tilt stability also uses $\oZl$ as the central charge, we think of $\omega$ as being of the form $\omega = u\Theta + vp^\ast H_B$ where $u, v>0$ are parameters that must satisfy the relation \eqref{eq:limitcurve}.   Since $u$ can be solved as  a power series in $\tfrac{1}{v}$, we think of $\oZl$ as a group homomorphism $K(X) \to \mathbb{C}\llbracket \tfrac{1}{v}\rrbracket$ that takes values in the power series ring in $\tfrac{1}{v}$.  For limit tilt stability, we compare phases of objects for $v \gg 0$ (i.e.\ when $\omega$ approaches the fiber direction) rather than for a fixed value of $\omega$.  Lemma \ref{lem:limvsnonlimtiltstab} describes in a  precise manner a relation  between tilt stability and limit tilt stability.

\paragraph We set \label{para:TFlastdefinitions}
\begin{align*}
\Tc^{l,+} &= \langle F \in \Tc^l : F \text{ is $\mu_f$-semistable}, \infty > \mu_f (F) > 0\rangle\\
\Tc^{l,0} &= \{  F \in \Tc^l : F \text{ is $\mu_f$-semistable}, \mu_f(F)=0\}\\
\Fc^{l,0} &= \{ F \in \Fc^l : F \text{ is $\mu_f$-semistable}, \mu_f(F)=0\} \\
\Fc^{l,-} &= \langle F \in \Fc^l : F \text{ is $\mu_f$-semistable}, \mu_f(F)<0\rangle.
\end{align*}
These categories have the same basic properties as their counterparts in the product threefold case in \cite{Lo14}, and so we omit their proofs and only indicate where the counterparts appear in \cite{Lo14}:
\begin{itemize}
\item[(i)] $\Tc^{l,0} \subset W_{1,\wh{\Phi}}$  \cite[Remark 4.8(iii)]{Lo14}.
\item[(ii)]   $\wh{\Phi}\Bl \subset D^{[0,1]}_{\Coh (X)}$, which  gives us a torsion triple in $\Bl$
\begin{equation}\label{eq:Bltorsiontriple1}
( \Fc^l[1], \, W_{0,\wh{\Phi}}, \, W_{1,\whPhi} \cap \Tc^{l,0} )
\end{equation}
 \cite[Remark 4.9]{Lo14}.
\item[(iii)] We have a torsion quintuple in $\Bl$
\begin{equation*}
  ( \Fc^{l,0}[1],\, \Fc^{l,-}[1],\, \Coh^{\leq 2}(X),\, \Tc^{l,+},\, \Tc^{l,0} )
\end{equation*}
 \cite[Lemma 4.11]{Lo14}.
\item[(iv)] $W_{1,\wh{\Phi}} \cap \Tc^l = \langle W_{1,\wh{\Phi}} \cap \Coh^{\leq 2}(X), \Tc^{l,0} \rangle$, and every object in this category can be written as the extension of a sheaf in $\Tc^{l,0}$ by a sheaf in $W_{1,\wh{\Phi}} \cap \Coh^{\leq 2}(X)$.  Moreover, for every $F \in W_{1,\wh{\Phi}} \cap \Tc^l$, the transform $\wh{\Phi} F [1]$ is a torsion sheaf on $X$.  (The proof of these claims is the same as the proof of \cite[Lemma 4.10]{Lo14} - we can just replace $\ch_{10}$ in that proof by $f\ch_1$.)
\item[(v)] For every torsion-free sheaf $E$ on $X$, we have $\Phi E [1] \in \Bl$   \cite[Lemma 4.12]{Lo14}.
\end{itemize}

In the predecessor to this article, \cite{Lo14}, limit tilt stability (or $\oZl$-stability) was referred to as $\nu^l$-stability.

\section{Phases of  objects with respect to limit tilt stability}\label{sec:phasecomputations}

We  compute the phases of various objects in $\Bl$ in this section.  These computations will be needed in order to establish the Harder-Narasimhan property of $\oZl$-stability on $\Bl$.  Throughout this section, we will assume that  $\bo, \omega$ are ample classes of the form \eqref{eq:boandomega} with $a,b,u,v >0$ satisfying \eqref{eq:constraint1} and \eqref{eq:limitcurve} and where $a,b$ are fixed.

\paragraph Take \label{eq:ImoZwPhiE1bo2ch1BEcomparison} any object $E \in D^b(X)$ with $x = f\ch_1(E) \neq 0$.  Using the notation in \ref{para:Chowringequation}, we have
\begin{align*}
  \Im \oZw (\Phi E [1]) &= \tfrac{\omega^3}{6}x + \omega \Theta A_2    -  \omega_1 A_3 \text{\quad from \eqref{eq:ImoZwPhiE1-v1}}\\
  &= \tfrac{\omega^3}{6}x \left( 1 + \tfrac{\omega \Theta A_2}{(\omega^3/6)x} \right) - \omega_1 A_3 \\
  &= \tfrac{\omega^3}{6}x \left( 1 + \tfrac{\bo_1 ( \bo_1 + 2\bo_2)A_2}{\Theta \bo^2 x} \right) - \omega_1 A_3 \text{\quad by \eqref{eq:Chowringequation}} \\
  &= \tfrac{(\omega^3/6)x}{\Theta \bo^2 x} (\Theta \bo^2 x + \bo_1 (\bo_1 + 2\bo_2)A_2 ) - \omega_1 A_3 \\
  &= \tfrac{(\omega^3/6)}{\Theta \bo^2 }\, \bo^2 \ch_1^B(E) - \omega_1 A_3 \text{\quad by \eqref{eq:boch1Be-v1} and where $B=-\tfrac{1}{2}p^\ast K_B$} \\
  &= O(v)\bo^2\ch_1^B(E) - O(\tfrac{1}{v})\Theta A_3 \text{\quad as $v \to \infty$}.
\end{align*}

\paragraph Suppose \label{para:WIT1shfch1eqzero} $F$ is a $\wh{\Phi}$-WIT$_1$ sheaf on $X$ with $\ch_0(F)\neq 0$ and $f\ch_1(F)=0$.  Then $\wh{F} = \wh{\Phi} F[1]$ is a coherent sheaf supported in dimension 2, and we have $0 < \bo \ch_1(\wh{F})$.  From the computation in \ref{eq:ImoZwPhiE1bo2ch1BEcomparison}, for $v \gg 0$ we have $0<\Im \oZw (\Phi \wh{F} [1]) = \Im \oZw (\Phi \wh{\Phi} F [2] ) = \Im \oZw (F[1])$ and $\Im \oZw (F)$ is $O(v)$.

\paragraph[The cohomological Fourier-Mukai formula for $\wh{\Phi}$]  Suppose $F \in D^b(X)$ has Chern character of the form \eqref{eq:chF}, i.e.\
\begin{equation*}
  \ch (F) = \begin{pmatrix} \wt{n} & p^\ast \wt{S} & \wt{a}f \\
  \wt{x}\Theta & \Theta p^\ast \wt{\eta} & \wt{s}
  \end{pmatrix}.
\end{equation*}
Then the formula \cite[(6.27)]{FMNT} gives
\begin{align}
  \ch_0 (\wh{\Phi}F) &= \wt{x} \notag\\
  \ch_1 (\wh{\Phi}F) &= -\wt{n}\Theta + p^\ast \wt{\eta} - \tfrac{1}{2}\wt{x}p^\ast K_B \notag\\
  \ch_2 (\wh{\Phi}F) &= (\tfrac{1}{2}\wt{n}p^\ast K_B - p^\ast \wt{S})\Theta + (\wt{s} - \tfrac{1}{2}K_B\wt{\eta} + \tfrac{1}{12}\wt{x}K_B^2)f \notag\\
  \ch_3 (\wh{\Phi}F) &= -\tfrac{1}{6}\wt{n}K_B^2 - \wt{a} +\tfrac{1}{2}K_B\wt{S}. \label{eq:chwhPhiF}
\end{align}

The following is a generalisation of \cite[Remark 5.17]{LZ2} from the product elliptic threefold to a general Weierstra{\ss} elliptic threefold:
\begin{lem}\label{lem:LZ2Remark5-17analogue}
\begin{itemize}
\item[(a)] For $1 \leq d \leq 3$, if $F \in \Coh^d(p)_{d-1} \cap W_{1,\wh{\Phi}}$  then $\ch_{3-d+1}(F)p^\ast H_B^{d-1} \leq 0$.
\item[(b)] For $1 \leq d \leq 3$, if $F \in \Coh^d(p)_{d-1} \cap W_{0,\wh{\Phi}}$  then $\ch_{3-d+1}(F)p^\ast H_B^{d-1} > 0$.
\end{itemize}
\end{lem}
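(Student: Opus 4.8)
My plan is to reduce each target quantity $\ch_{3-d+1}(F)\,p^\ast H_B^{d-1}$ to a single Chern number of $F$, and then read off its sign from the \emph{leading} Chern character of the Fourier-Mukai transform of $F$, which is a genuine sheaf precisely because $F$ is WIT. First I would record the shape of $\ch(F)$ in the notation of \eqref{eq:chF}. Since $\dimension \supp F = d$ while $\dimension p(\supp F)=d-1$, the support is ``vertical'' over a $(d-1)$-dimensional locus of $B$, which kills the Chern components carrying a ``horizontal'' $\Theta$-factor in the relevant degree. Concretely: for $d=3$ one has $\wt n = \ch_0(F)>0$ and $\ch_1(F)p^\ast H_B^2 = \wt x H_B^2$ (the $p^\ast\wt S$ term dies as $\wt S H_B^2 \in A^3(B)=0$); for $d=2$ the support is of the form $p^{-1}(\text{curve})$, so $\ch_1(F)=p^\ast\wt S$ has no $\Theta$-component, forcing $\wt x = f\ch_1(F)=0$, and then $\ch_2(F)p^\ast H_B = H_B\wt\eta$ (the $\wt a f$ term dies as $f\cdot p^\ast H_B=0$); for $d=1$ the support is a vertical $1$-cycle, hence a multiple of $f$, so $\wt n=\wt x=0$, $\wt S=0$, $\wt\eta=0$ and $\ch_3(F)=\wt s$. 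In each case the target is, up to the positive factor $H_B^{d-1}$, the single number $\wt x$ ($d=3$), $H_B\wt\eta$ ($d=2$), or $\wt s$ ($d=1$).

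Next I would feed this reduced $\ch(F)$ into the cohomological formula \eqref{eq:chwhPhiF} for $\wh\Phi$. With the above vanishings, the only surviving term of the relevant degree is $\ch_0(\wh\Phi F)=\wt x$ for $d=3$, $\ch_1(\wh\Phi F)=p^\ast\wt\eta$ for $d=2$, and $\ch_2(\wh\Phi F)=\wt s f$ for $d=1$. Because $F$ is $\wh\Phi$-WIT$_i$, the object $\wh F := \wh\Phi F$ (if $i=0$) or $\wh\Phi F[1]$ (if $i=1$) is a coherent sheaf with $\ch(\wh F)=(-1)^i\ch(\wh\Phi F)$. Thus the reduced quantity is $\pm$ the first non-vanishing Chern character of the sheaf $\wh F$: its rank $\ch_0(\wh F)$ for $d=3$, the divisor class $\ch_1(\wh F)=\pm p^\ast\wt\eta$ for $d=2$, and the $1$-cycle $\ch_2(\wh F)=\pm\wt s f$ for $d=1$. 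Since these are leading Chern classes of a sheaf, they are non-negative/effective: a rank is $\geq 0$; and for $d=2,1$ the sheaf $\wh F$ is torsion, so $\ch_1(\wh F)$ (resp. $\ch_2(\wh F)$) is the effective fundamental cycle of its support.

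Part (a), the WIT$_1$ case $i=1$, now follows with no further work: the extra minus sign from $i=1$ turns effectivity into the desired $\leq 0$. Explicitly, for $d=3$ effectivity of the rank gives $-\wt x\geq 0$; for $d=2$ effectivity of $-p^\ast\wt\eta$ together with Corollary \ref{cor:petaeffective} gives $H_B(-\wt\eta)\geq 0$, i.e. $H_B\wt\eta\leq 0$; and for $d=1$ effectivity of $-\wt s f$ (an effective $1$-cycle inside fibres is a non-negative multiple of $f$) gives $-\wt s\geq 0$.

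For part (b), the WIT$_0$ case, effectivity only yields $\geq 0$, and the genuine content is the \emph{strict} inequality, i.e. that $\wh F$ does not drop dimension: $\ch_0(\wh F)\neq 0$ for $d=3$, $\wt\eta\neq 0$ for $d=2$, $\wt s\neq 0$ for $d=1$. This is where I expect the main obstacle to lie. The mechanism is that $\wh F$ is a \emph{nonzero} $\Phi$-WIT$_1$ sheaf, whereas every degenerate sheaf that would make the leading class vanish is $\Phi$-WIT$_0$. For $d=3$ this is clean: if $\ch_0(\wh F)=0$ then $\wh F$ is torsion with $\ch_1(\wh F)$ effective, yet $f\,\ch_1(\wh F)=\ch_0(\Phi\wh F)=\ch_0(F[-1])=-\wt n<0$ by \eqref{eq:PhiwhPhiisidshifted} and \eqref{eq:chPhiE}, contradicting $f\cdot(\text{effective})\geq 0$. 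For $d=1$, vanishing of $\ch_2(\wh F)$ would force $\dimension\supp\wh F=0$, but $0$-dimensional sheaves are $\Phi$-WIT$_0$ (fibrewise skyscrapers transform to degree-$0$ line bundles in degree $0$), contradicting $\wh F\in W_{1,\Phi}$. For $d=2$, vanishing of $\wt\eta$ forces $\dimension\supp\wh F\leq 1$; being $\Phi$-WIT$_1$, $\wh F$ admits no skyscraper quotient on any fibre, so its support is vertical and $\dimension p(\supp\wh F)=0$, while $\wh\Phi$ preserves the base-dimension of the support, so $\wh F\in\Coh(p)_1$ and $\dimension p(\supp\wh F)=1$ --- a contradiction. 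The structural inputs here (that $0$-dimensional and ``horizontal'' sheaves are $\Phi$-WIT$_0$, and that $\Phi,\wh\Phi$ preserve $\dimension p(\supp)$) are exactly the facts established in \cite{Lo7,Lo11,LZ2}, and this dimension bookkeeping for the transform is the crux of (b).
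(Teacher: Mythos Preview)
Your proposal is correct and follows essentially the same strategy as the paper: reduce $\ch_{3-d+1}(F)\,p^\ast H_B^{d-1}$ to the leading Chern class of the sheaf $\wh F$, use effectivity for part (a), and for the strictness in (b) rule out a dimension drop of $\wh F$ via its $\Phi$-WIT$_1$ property together with the structural inputs from \cite{Lo7,Lo11} and \cite[Proposition 6.38]{FMNT}. Your $d=3$ strictness argument via $f\cdot\ch_1(\wh F)=-\wt n<0$ is in fact more direct than the paper's (which instead invokes \cite[Lemma 2.6]{Lo7} to force $F\in\Coh(p)$); for $d=2$ your heuristic ``no skyscraper quotient on any fibre'' is slightly imprecise---the operative mechanism is that $\{\Coh^{\leq 0}\}^\uparrow\subset W_{0,\Phi}$, so a $\Phi$-WIT$_1$ sheaf in $\Coh^{\leq 1}(X)$ has no nonzero \emph{subsheaf} in $\{\Coh^{\leq 0}\}^\uparrow$ and hence must be a fiber sheaf---but this is exactly what your cited references supply.
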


\begin{proof}
Suppose $F \neq 0$ and $\ch(F)$ is as in \eqref{eq:chF}. Then
\begin{equation*}
  \ch_{3-d+1}(F) p^\ast H_B^{d-1} = \begin{cases}
  \ch_1(F)p^\ast H_B^2 =  \wt{x} H_B^2 &\text{ when $d=3$} \\
  \ch_2(F)p^\ast H_B = \wt{\eta} H_B &\text{ when $d=2$} \\
  \ch_3(F) = \wt{s} &\text{ when $d=1$}
  \end{cases}.
\end{equation*}

We consider parts (a) and (b) at the same time for each $d$:
\begin{itemize}
\item When $d=3$: $\ch(\wh{\Phi}F) = {\left( \begin{smallmatrix} \wt{x} & \ast & \ast \\
-\wt{n}\Theta & \ast & \ast \end{smallmatrix}\right)}$, so $\ch_1(F)p^\ast H_B^2 = \wt{x}H_B^2=\ch_0 (\wh{\Phi} F)H_B^2$, which is  nonpositive (resp.\ nonnegative) if $F$ is $\wh{\Phi}$-WIT$_1$ (resp.\  $\wh{\Phi}$-WIT$_0$).  Furthermore, if $F$ is $\wh{\Phi}$-WIT$_0$, then we cannot have $\ch_0(\wh{\Phi}F)=0$, or else $\wh{\Phi}F$ would be a $\Phi$-WIT$_1$ sheaf in $\Coh^{\leq 2}(X)$, implying $\wh{\Phi} F$ and hence $F$ itself lies in $\Coh (p)$ \cite[Lemma 2.6]{Lo7}, contradicting the assumption that $F$ is supported in dimension 3.  This proves the case $d=3$.
\item When $d=2$: $\wt{n}=\wt{x}=0$ by assumption and $\ch(\wh{\Phi}F) = {\left( \begin{smallmatrix} 0 & p^\ast \wt{\eta} & (\wt{s} - \tfrac{1}{2}K_B\wt{\eta} )f \\
    0 & -\Theta p^\ast \wt{S} & \ast \end{smallmatrix}\right)}$.  Then $\ch_2(F)p^\ast H_B = H_B\wt{\eta} = (p^\ast \wt{\eta})\Theta p^\ast H_B$, which has the same sign as the leading coefficient of $\omega^2 \ch_1 (\wh{\Phi}F)$ as $v \to \infty$.  Therefore, $\ch_2(F)p^\ast H_B$ is nonpositive (resp.\ nonnegative) if $F$ is $\wh{\Phi}$-WIT$_1$ (resp.\ $\wh{\Phi}$-WIT$_0$).  Furthermore, if $F$ is $\wh{\Phi}$-WIT$_0$, then we cannot have $\ch_2(F)p^\ast H_B=0$, or else $\ch_1(\wh{\Phi}F)=0$, implying $\wh{\Phi}F$ is a $\Phi$-WIT$_1$ coherent sheaf supported in dimension 1, in which case $\wh{\Phi} F$ and hence $F$ itself must be a fiber sheaf (by \cite[Remark 3.14, Lemma 3.15]{Lo7}), contradicting the assumption that $F$ is supported in dimension 2.
\item When $d=1$: $\wt{n}, \wt{x}, p^\ast \wt{S}, \Theta p^\ast \wt{\eta}$ all vanish, and $\ch(F) = {\left( \begin{smallmatrix} 0 & 0 & \wt{a}f \\ 0 & 0 & \wt{s}\end{smallmatrix}\right)}$.  When $F$ is $\wh{\Phi}$-WIT$_i$, we have that $\ch_2(\wh{\Phi}F[i])=\wt{s}f$ is effective, and so $(-1)^i \wt{s} \geq 0$.  Furthermore, if $F$ is $\wh{\Phi}$-WIT$_0$ then we must have $\wt{s}>0$, or else $\wh{\Phi} F$ is a $\Phi$-WIT$_1$ fiber sheaf with $\ch_2=0$, meaning $\whPhi F$ is $\Phi$-WIT$_1$ and yet is supported in dimension 0, which is impossible by \cite[Proposition 6.38]{FMNT}.
\end{itemize}
\end{proof}

\paragraph Adopting \label{para:boxdiagramsdefinition} the notation in \cite[Section 3]{Lo14}, we will define the following subcategories of $\Coh (X)$:
\begin{align*}
   \scalea{\gyoung(;;;,;;;+)}&= \Coh^{\leq 0}(X) \\
    \scalea{\gyoung(;;;+,;;;+)} &= \{ E \in \Coh^1(p)_0 :  \text{ all $\mu$-HN factors of $E$ have $\infty>\mu>0$}\} \\
  \scalea{\gyoung(;;;+,;;;0)} &=\{ E \in \Coh^1(p)_0 : \text{ all $\mu$-HN factors of $E$ have $\mu=0$}\} \\
  \scalea{\gyoung(;;;+,;;;-)} &= \{ E \in \Coh^1(p)_0 :  \text{ all $\mu$-HN factors of $E$ have $\mu<0$}\} \\
   \scalea{\gyoung(;;;*,;;+;*)} &= \Coh^1(p)_1 \cap \{\Coh^{\leq 0}\}^\uparrow\\
    \scalea{\gyoung(;;+;*,;;+;*)} &= \Coh^2(p)_1 \cap W_{0,\whPhi}. \\
\scalea{\gyoung(;;+;*,;;0;*)} &= \{ E \in \Phi (\{\Coh^{\leq 0}\}^\uparrow \cap \Coh^{\leq 1}(X)) : \dimension E = 2\} \\
\scalea{\gyoung(;;+;*,;;-;*)} &= \{ E \in \Coh^2(p)_1 \cap W_{1,\wh{\Phi}} : \Theta (p^\ast {\eta})\neq 0\} \text{ where $\eta$ is as in \eqref{eq:chEtorsionfreepart}}\\
\scalea{\gyoung(;;*;*,;+;*;*)}  &= \{ E \in \Coh^2(p)_2 \cap W_{0,\wh{\Phi}} : f\ch_1(E)>0\}
\end{align*}

The Fourier-Mukai transform $\Phi$ induces the following equivalences, as already observed in \cite[Remark 3.1]{Lo14}:
\[
  \xymatrix @-0.7pc{
\scalea{\gyoung(;;;,;;;+)} \ar[dr] & \scalea{\gyoung(;;;+,;;;+)} \ar@/^1.5pc/[dd] & \scalea{\gyoung(;;;*,;;+;*)} \ar[dr] & \scalea{\gyoung(;;+;*,;;+;*)} \ar@/^1.5pc/[dd] & \scalea{\gyoung(;;*;*,;+;*;*)} \ar[dr] & \scalea{\gyoung(;+;*;*,;+;*;*)} \ar@/^1.5pc/[dd] \\
 & \scalea{\gyoung(;;;+,;;;0)} & & \scalea{\gyoung(;;+;*,;;0;*)} & & \scalea{\gyoung(;+;*;*,;0;*;*)} \\
 & \scalea{\gyoung(;;;+,;;;-)} & & \scalea{\gyoung(;;+;*,;;-;*)} & & \scalea{\gyoung(;+;*;*,;-;*;*)}
 }
\]
The categories $\scalea{\gyoung(;+;*;*,;+;*;*)},\scalea{\gyoung(;+;*;*,;0;*;*)},\scalea{\gyoung(;+;*;*,;-;*;*)}$ are defined in \ref{para:paper14appendixanalogues}.  The categories denoted by diagrams of the form  $\scalea{\gyoung(;;;,;;;)}$  can similarly be defined using the Fourier-Mukai transform $\Phi$ instead of $\whPhi$; it  will always be clear from the context as to which functor is used in the definition. A concatenation of more than one such diagram will mean the extension closure of the categories involved; for example, the concatenation
\[
\xymatrix @-2.3pc{
\scalea{\gyoung(;;;,;;;+)} & \scalea{\gyoung(;;;+,;;;+)} \\
& \scalea{\gyoung(;;;+,;;;0)}
}
\]
is the extension closure of all slope semistable fiber sheaves of slope at least zero (including sheaves supported in dimension zero, which are slope semistable fiber sheaves of slope $+\infty$).

\paragraph Take \label{para:phasecomputations} any nonzero object $F \in \Bl$, and suppose $\ch (F)$ is of the form \eqref{eq:chF}. Recall the equations
\begin{align*}
  \oZw (F) &= \tfrac{\omega^2}{2}\ch_1(F) + i(\omega\ch_2(F) - \tfrac{\omega^3}{6}\ch_0(F)) \\
  &= O(v^2)H_B^2\wt{x} + O(1)H_B\wt{S} + i \left( O(v)H_B\wt{\eta} + O(\tfrac{1}{v})\wt{a}-O(v)H_B^2\wt{n} \right) \text{\quad from \eqref{eq:oZwFOvform}}.
\end{align*}

The following phase computations are more streamlined than their counterparts in \cite[4.6]{Lo14}:
\begin{itemize}
\item[(i)] Suppose $F \in \Coh^{\leq 1}(X)$.
  \begin{itemize}
  \item[(a)] If $F \in \Coh^{\leq 0}(X)$, then $\oZw (F)=0$ and $\phi (F) = \tfrac{1}{2}$ by definition.
  \item[(b)] If $\dimension F=1$, then $\omega^2 \ch_1(F)=0$, which means $\oZw (F)$ is purely imaginary and nonzero \cite[Remark 3.3.1]{BMT1}, and so $\phi (F) = \tfrac{1}{2}$.
  \end{itemize}
\item[(ii)] Suppose $F \in  \scalea{\gyoung(;;*;*,;+;*;*)}$.   Then $\wt{x} = f\ch_1(F) > 0$, and so $\phi (F) \to 0$.
\item[(iii)] Suppose $F \in \Coh^2(p)_1$: then $\wt{n}=0$ and $\wt{x}=f\ch_1(F)=0$, and $\ch_1(F)=p^\ast \wt{S}$ is effective.
    \begin{itemize}
    \item[(A)] If $F \in W_{0,\wh{\Phi}}$, then $F \in \scalea{\gyoung(;;+;*,;;+;*)}$, and from the case of $d=2$ in the proof of Lemma \ref{lem:LZ2Remark5-17analogue} we know $\ch_2 (F) p^\ast H_B = H_B \wt{\eta} > 0$ and so $\phi (F) \to \tfrac{1}{2}$.
    \item[(B)] If $F \in W_{1,\wh{\Phi}}$:
      \begin{itemize}
      \item[(a)] If $F \in \scalea{\gyoung(;;+;*,;;-;*)}$, then again from the proof of Lemma \ref{lem:LZ2Remark5-17analogue} we know  $H_B \wt{\eta} < 0$, which gives $\phi (F) \to -\tfrac{1}{2}$.
      \item[(b)] If $F \in \scalea{\gyoung(;;+;*,;;0;*)}$, then $\ch (F) = {\left( \begin{smallmatrix} 0 & \ast & \ast \\ 0 & 0 & \ast \end{smallmatrix}\right)}$ and
          \[
            \oZw (F) = \tfrac{\omega^2}{2}p^\ast \wt{S} + i\omega \wt{a} f = \tfrac{u(hu+2v)}{2} H_B\wt{S} + i u\wt{a}.
          \]
          Since $p^\ast \wt{S}$ is effective, we have $\Re \oZw (F)>0$ and is $O(1)$
          while $\Im \oZw (F)$ is $O(\tfrac{1}{v})$ as $v \to \infty$.  Hence  $\phi (F) \to 0$.  Note that, depending on the sign of $\wt{a}$, we could have $\phi (F) \to 0^+, \phi (F)=0$ or $\phi (F) \to 0^-$.
      \end{itemize}
    \end{itemize}
\item[(iv)] Suppose $F \in \Tc^{l,+}$.  Then $\wt{x}=f\ch_1(F)>0$ and $\phi (F) \to 0$.
\item[(v)] Suppose $F \in \Tc^{l,0}$.  Then $\ch_0(F)\neq 0, \wt{x}=f\ch_1(F)=0$ and $F$ is $\wh{\Phi}$-WIT$_1$ by \ref{para:TFlastdefinitions}(i).  From \ref{para:WIT1shfch1eqzero}, we know that $\oZw(F)$ is dominated by the imaginary part, which is $O(v)$ and negative as $v \to \infty$.  Hence $\phi (F) \to -\tfrac{1}{2}$.
\item[(vi)] Suppose $F=A[1]$ where $A \in \Fc^{l,0}$.  Then as in case (v), $\oZw (A)$ is dominated by its imaginary part,  which is negative as $v \to \infty$.  Hence $\phi (F) = \phi (A[1]) \to \tfrac{1}{2}$.
\item[(vii)] Suppose $F = A[1]$ where $A \in \Fc^{l,-}$.  Then $\wt{x} = f\ch_1(F)>0$ and $\phi (F) \to 0$.
\end{itemize}

\paragraph[The category $W_{1,\wh{\Phi}} \cap \mathcal{T}^{l,0}$] Given any object $M$ in $W_{1,\wh{\Phi}} \cap \Tc^{l,0}$, we know from \ref{para:TFlastdefinitions}(iv) that  $M$ has a filtration $M_0 \subseteq M_1 = M$ in $\Coh (X)$ where $M_0 \in W_{1,\wh{\Phi}} \cap \Coh^{\leq 2}(X)$ and $M_1/M_0 \in \Tc^{l,0}$.  We compute $\phi (M)$ in different cases:
\begin{itemize}
\item Suppose $M_1/M_0 \neq 0$.  Then $M$ is a $\wh{\Phi}$-WIT$_1$ sheaf of nonzero rank with $f\ch_1(F)=0$, meaning $\wh{M} = \whPhi M [1]$ is a coherent sheaf supported in dimension 2.  By \ref{para:WIT1shfch1eqzero}, we know $\Im \oZw (M)$ is negative and $O(v)$ as $v \to \infty$, while $\Re \oZw (M)$ is at most $O(1)$.  Hence $\phi (M) \to  -\tfrac{1}{2}$.
\item Suppose $M_1/M_0=0$.  Then $M=M_0$ lies in $W_{1,\wh{\Phi}} \cap \Coh^{\leq 2}(X)$, and so $M \in \Coh(p)_{\leq 1}$.  Let us write $\ch (M) = {\left( \begin{smallmatrix} 0 & p^\ast \overline{S} & \overline{a}f \\
    0 & \Theta p^\ast \overline{\eta} & \overline{s} \end{smallmatrix}\right)}$.
    \begin{itemize}
    \item If $\dimension M=2$, then by Lemma \ref{lem:LZ2Remark5-17analogue} we have $\ch_2(M)p^\ast H_B = H_B \overline{\eta} \leq 0$.
        \begin{itemize}
        \item If $H_B \overline{\eta} < 0$, then $\phi (M) \to -\tfrac{1}{2}$.
        \item If $H_B \overline{\eta} = 0$, then $\oZw (M) = O(1)H_B \overline{S} + iO(\tfrac{1}{v})\overline{a}$.  Since $H_B \overline{S}$ is the leading coefficient of $\omega^2 \ch_1(M)$ as $v \to \infty$, we have $H_B \overline{S} > 0$, and so $\phi (M) \to 0$.
                \end{itemize}
    \item If $\dimension M=1$, then $M$ is a fiber sheaf and $\phi (M) = \tfrac{1}{2}$.
    \end{itemize}
\end{itemize}

\section{Slope stability vs limit tilt stability}\label{sec:slopestabvslimittilstab}

In this section, we will prove a  comparison theorem between slope stability and limit tilt stability via the Fourier-Mukai transform $\Phi$,  generalising the main result in the product threefold case in the predecessor to this article  \cite[Theorem 5.1]{Lo14}.  Part (A) of  Theorem \ref{thm:paper14thm5-1analogue}  describes a vanishing condition under which a $\mu_\bo$-stable torsion-free sheaf is taken by $\Phi$ to a $\oZl$-stable object, while part (B) of the theorem will explain how a $\oZl$-semistable object can be modified in codimension 2 so that it is taken by $\whPhi$ to a $\mu_\bo$-semistable torsion-free sheaf.  We will then discuss  scenarios under which the  vanishing condition in Theorem \ref{thm:paper14thm5-1analogue}(A) hold.  One of these scenarios is when we have a torsion-free reflexive sheaf.  Since every torsion-free sheaf on a smooth projective threefold differs from its double dual (which is reflexive) only in codimension 2, we can interpret Theorem \ref{thm:paper14thm5-1analogue} as follows: $\mu_\bo$-stability and $\oZl$-stability coincide up to modification in codimension 2 in the derived category.

\paragraph[Positivity of Chern classes]

\begin{lemsub}\label{lem:1dimshch2etaeffective}
Suppose $E \in \Coh^{\leq 1}(X)$ and  $\ch(E) =  {\left( \begin{smallmatrix} 0 & 0 & \ol{a}f \\ 0 & \Theta p^\ast \ol{\eta} & \ol{s}  \end{smallmatrix}\right)}$.  Then $\ol{\eta}$ is an effective divisor class on $B$.
\end{lemsub}

\begin{proof}
Since $E$ is supported in dimension at most 1, the 1-cycle $\Theta p^\ast \ol{\eta} + \ol{a}f$ is effective.  Thus the pushforward $p_\ast (\Theta p^\ast \ol{\eta} + \ol{a}f)$ is also effective.  However, the projection formula gives $p_\ast (\Theta p^\ast \ol{\eta} + \ol{a}f)= (p_\ast \Theta)\ol{\eta}=\ol{\eta}$.  The lemma then follows.
\end{proof}

\begin{lemsub}\label{lem:cohleq10dimfiberpositivity}
For any nonzero $E$ in $\Coh^{\leq 1}(X)$, we have $(p^\ast H_B) \ch_2(E) \geq 0$.  In the event that $(p^\ast H_B) \ch_2(E)=0$ and $E \in \Coh^{\leq 1}(X) \cap \{\Coh^{\leq 0}\}^\uparrow$, the sheaf $E$ is supported in dimension 0 and $\ch_3 (E) > 0$.
\end{lemsub}

\begin{proof}
Take any $E \in \Coh^{\leq 1}(X)$ and suppose $\ch(E) =  {\left( \begin{smallmatrix} 0 & 0 & \ol{a}f \\ 0 & \Theta p^\ast \ol{\eta} & \ol{s}  \end{smallmatrix}\right)}$.  By Lemma \ref{lem:1dimshch2etaeffective}, we know $\ol{\eta}$ is an effective divisor class on $B$, and so
\[
  (p^\ast H_B) \ch_2(E) = (p^\ast H_B) \Theta (p^\ast \ol{\eta}) = H_B \ol{\eta} \geq 0
\]
since $H_B$ is ample on $B$.  Now suppose $(p^\ast H_B) \ch_2(E)=0$ and $E \in \Coh^{\leq 1}(X) \cap \{\Coh^{\leq 0}\}^\uparrow$.  Since $\ol{\eta}$ is effective on $B$, this means $\ol{\eta}=0$ in $A^1(B)$.  Thus $E$ is supported on a finite number of fibers of $p$.  That $E \in \{\Coh^{\leq 0}\}^\uparrow$ then forces $E$ to be supported in dimension 0, and so $\ch_3 (E) > 0$.
\end{proof}

\subparagraph[An auxiliary function $\mu_\ast$ on $\Coh^{\leq 1}(X)$] We \label{para:mulowerastfunctiondefinition} define a function $\mu_\ast$ on $\Coh^{\leq 1}(X)$ by setting, for any $E \in \Coh^{\leq 1}(X)$,
\begin{equation*}
  \mu_\ast (E) = \begin{cases} \tfrac{\ch_3(E)}{(p^\ast H_B) \ch_2(E)} &\text{ if $(p^\ast H_B)\ch_2(E) \neq 0$} \\
  +\infty &\text{ if $(p^\ast H_B)\ch_2(E)=0$}
  \end{cases}.
\end{equation*}
By Lemma \ref{lem:cohleq10dimfiberpositivity}, for any $E \in \Coh^{\leq 1}(X)$ we have $(p^\ast H_B)\ch_2(E) \geq 0$.  Note that if $E \in \Coh^{\leq 1}(X)$ and $(p^\ast H_B)\ch_2(E)=0$, it does not necessarily mean $\ch_3(E)\geq 0$.  For example, for any $E \in  \scalea{\gyoung(;;;+,;;;-)}$ we have $(p^\ast H_B)\ch_2(E)=0$ and $\ch_3(E)< 0$.  Nonetheless, if $E \in \Coh^{\leq 1}(X) \cap W_{0,\Phi}$, then in case of $(p^\ast H_B)\ch_2(E)=0$ we do have $\ch_3(E)\geq 0$: the vanishing $(p^\ast H_B)\ch_2(E)=0$ implies that $E$ is a fiber sheaf as in the proof of Lemma  \ref{lem:cohleq10dimfiberpositivity}.  Moreover, any $\Phi$-WIT$_0$ fiber sheaf lies in the extension closure $\langle \Coh^{\leq 0}(X),  \scalea{\gyoung(;;;+,;;;+)}\rangle$ and so we have  $\ch_3(E) \geq 0$, with equality if and only if $E$ is zero.  (See \ref{para:muloweraststabdefinition} below.)

\subparagraph[$\mu_\ast$-semistability] For \label{para:muloweraststabdefinition} convenience, we will say a sheaf $E \in \Coh^{\leq 1}(X)$ is $\mu_\ast$-semistable if, for every short exact sequence in $\Coh (X)$ of the form
\[
0 \to M \to E \to N \to 0
\]
where $M,N \neq 0$, we have $\mu_\ast (M) \leq \mu_\ast (N)$.  \textit{A priori}, we do not have the Harder-Narasimhan property for $\mu_\ast$ because $(p^\ast H_B)\ch_2(E)=0$ does not guarantee $\ch_3(E) \geq 0$ as mentioned in \ref{para:mulowerastfunctiondefinition}.  Even if we restrict to the category $\Coh^{\leq 1}(X) \cap W_{0,\Phi}$, we cannot apply \cite[Proposition 3.4]{LZ2} directly because this category is not a Serre subcategory of $\Coh (X)$.  However, if we restrict to the case $E \in \Coh^{\leq 1}(X) \cap \{\Coh^{\leq 0}\}^\uparrow$, then the vanishing $(p^\ast H_B)\ch_2(E)=0$ does imply $\ch_3(E)\geq 0$ by Lemma \ref{lem:cohleq10dimfiberpositivity}.  Since $\Coh^{\leq 1}(X) \cap \{\Coh^{\leq 0}\}^\uparrow$ is a Serre subcategory of $\Coh (X)$, and hence a noetherian abelian category in its own right, the function $\mu_\ast$ is a slope-like function on $\Coh^{\leq 1}(X) \cap \{\Coh^{\leq 0}\}^\uparrow$ in the sense of \cite[Sectino 3.2]{LZ2} and has the Harder-Narasimhan property as a result of \cite[Proposition 3.4]{LZ2}.

\subparagraph  We \label{para:muastBdefiition} can also define a `twisted' version of $\mu_\ast$ using the twisted Chern character.  For any $B \in A^1(X)$ we set
\begin{equation*}
  \mu_{\ast,B}(E) = \begin{cases}
    \frac{\ch_3^B(E)}{(p^\ast H_B) \ch_2^B(E)} &\text{ if $(p^\ast H_B)\ch_2^B(E) \neq 0$}\\
    +\infty &\text{ if $(p^\ast H_B)\ch_2^B(E)=0$}
    \end{cases}.
\end{equation*}
  For the specific $B$-field  $B = -\tfrac{1}{2}p^\ast K_B$ and any object $E \in D^b(X)$ with  Chern character $\ch(E) =  {\left( \begin{smallmatrix} 0 & 0 & \ol{a}f \\ 0 & \Theta p^\ast \ol{\eta} & \ol{s}  \end{smallmatrix}\right)}$ where $(p^\ast H_B)\ch_2(E) = H_B \ol{\eta} \neq 0$, we have  $\ch^B(E) = {\left( \begin{smallmatrix} 0 & 0 & \ol{a}f \\ 0 & \Theta p^\ast \ol{\eta}\,\,\, & \ol{s}+\tfrac{1}{2}K_B\ol{\eta}  \end{smallmatrix}\right)}$ and thus
\begin{align*}
  \mu_{\ast,B} (E) &= \frac{\ch_3^B(E)}{(p^\ast H_B) \ch_2^B(E)} \\
  &= \frac{\ol{s} + \tfrac{1}{2}K_B\ol{\eta}}{(p^\ast H_B)\ch_2(E)} \\
  &= \frac{\ol{s}}{H_B\ol{\eta}} + \frac{1}{2}\cdot \frac{hH_B \ol{\eta}}{H_B\ol{\eta}} \\
  &= \frac{\ol{s}}{H_B\ol{\eta}} + \frac{h}{2}.
\end{align*}

\subparagraph Note \label{para:1dimsheaffibercriterion} that, for any $E \in \Coh^{\leq 1}(X)$, we have $\mu_{\ast,B}(E)=\infty$ if and only if $E$ is a fiber sheaf.  This is because
\begin{align*}
  \mu_{\ast,B}(E)= \infty &\Leftrightarrow (p^\ast H_B)\ch_2^B(E) = 0 \text{ from the definition of $\mu_{\ast,B}$} \\
  &\Leftrightarrow H_B \ol{\eta}=0 \text{\quad  (with $\ch(E)$ written as in \ref{para:muastBdefiition})} \\
  &\Leftrightarrow \ol{\eta}=0 \text{ by Lemma \ref{lem:1dimshch2etaeffective}} \\
  &\Leftrightarrow \mathrm{supp}(E) \text{ is contained in a finite union of fibers of $p$},
\end{align*}
i.e.\ $E$ is a fiber sheaf.

We now come to the main theorem of this article.   Theorem \ref{thm:paper14thm5-1analogue}(A) says that every torsion-free sheaf on $X$ satisfying the vanishing condition (V)  are taken by the Fourier-Mukai transform $\Phi$ to a limit tilt stable object.  We will give examples where the condition (V) holds in  Corollary \ref{cor:TodaLinvobjconnection}, Remark \ref{rem:TodaHallAlgconnection} and Corollary \ref{cor:thm1reflexiveexample}.  In particular, Corollary \ref{cor:thm1reflexiveexample} says every torsion-free reflexive sheaf  is taken by $\Phi$ to a limit tilt stable object.

\begin{thm}[$\mu_{\bo,B}$-stability versus $\oZl$-stability]\label{thm:paper14thm5-1analogue}
Let $p : X \to B$ be a Weierstra{\ss} elliptic threefold satisfying assumptions (1)-(3) in \ref{para:ellipfibdef}.  Fix  any $a,b>0$ satisfying \eqref{eq:constraint1}, and fix the $B$-field $B=-\tfrac{1}{2}p^\ast K_B$.
\begin{itemize}
\item[(A)] Suppose $E$ is a  $\mu_{\bo,B}$-stable torsion-free sheaf $E$ on $X$ satisfying the following  condition:
\begin{itemize}
\item[(V)] Given any $T \in \Coh^{\leq 1}(X) \cap W_{0,\Phi}$ satisfying
\begin{equation}\label{eq:vanishingV2eqn}
  \mu_{\ast,B}(T) \geq\frac{2}{a(ha+2b)H_B^2} \cdot \mu_{\bo,B} (E)
\end{equation}
that comes with some $\alpha \in \Hom (T,E[1])$ such that $\Phi \alpha$ is a $\Bl$-injection, $T$ must be zero.
\end{itemize}

Then $\Phi E[1] \in \Bl$ is a $\oZl$-stable object.
\item[(B)] Suppose $F \in \Bl$ is a $\oZl$-semistable object with $f\ch_1(F) \neq 0$.  Consider the $\Bl$-short exact sequence
\[
  0 \to F' \to F \to F'' \to 0
\]
where $F' \in \langle \Fc^l[1], W_{0,\whPhi}\rangle$ and $F'' \in W_{1,\whPhi} \cap \Tc^l$, which exists by the torsion triple \eqref{eq:Bltorsiontriple1}.  Then $\whPhi F'$ is a torsion-free $\mu_{\bo,B}$-semistable sheaf, and $\whPhi F'' [1] \in \Coh^{\leq 1}(X)$.
\end{itemize}
\end{thm}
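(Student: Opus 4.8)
\emph{Part (A).} Since $E$ is torsion-free, $\Phi E[1] \in \Bl$ by \ref{para:TFlastdefinitions}(v), and the transform formula \eqref{eq:chPhiE} gives $f\ch_1(\Phi E[1]) = \ch_0(E) = \rank E > 0$, so by \ref{para:phasecomputations}(iv) we have $\phi(\Phi E[1]) \to 0$, the least limiting phase available in $\Bl$ among objects of positive fibre degree. I would argue by contradiction, assuming a $\Bl$-short exact sequence $0 \to M \to \Phi E[1] \to N \to 0$ with $M,N \neq 0$ and $\phi(M) \succeq \phi(N)$, so that $\phi(M) \succeq \phi(\Phi E[1]) \succeq \phi(N)$ by the seesaw property. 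The plan is to transport this sequence to the $E$-side and contradict the $\mu_{\bo,B}$-stability of $E$.

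\emph{Transfer and the role of (V).} Applying $\whPhi$ and using $\whPhi\Phi = \mathrm{id}_{D^b(X)}[-1]$ together with $\whPhi\Bl \subseteq D^{[0,1]}_{\Coh(X)}$ (\ref{para:TFlastdefinitions}(ii)), I obtain a triangle $\whPhi M \to E \to \whPhi N \to \whPhi M[1]$. Because $E$ is a sheaf, the cohomology long exact sequence collapses $\whPhi N$ to a sheaf in degree $0$ and yields
\[
0 \to H^0(\whPhi M) \to E \to \whPhi N \to H^1(\whPhi M) \to 0 ,
\]
so $H^0(\whPhi M)$ is a subsheaf of the torsion-free sheaf $E$. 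By the numerical equivalence \eqref{eq:ch1EImoZwPhiE1asympequiv}, worked out in \ref{eq:ImoZwPhiE1bo2ch1BEcomparison}, the leading imaginary part of $\oZw$ is (up to the positive constant $C = (\Theta\bo^2)/(\omega^3/6)$) the slope numerator $\bo^2\ch_1^B$; consequently, for objects of positive fibre degree the comparison $\phi(M) \succeq \phi(\Phi E[1])$ translates at order $O(\tfrac1v)$ into a $\mu_{\bo,B}$-slope inequality for $H^0(\whPhi M) \subseteq E$ that contradicts the $\mu_{\bo,B}$-stability of $E$ --- unless the two imaginary parts agree to leading order, in which case the destabilization is detected only by $\ch_3$, i.e.\ by the finer invariant $\mu_\ast$. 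In that single degenerate case the $\Phi$-WIT$_0$ torsion part $T \in \Coh^{\leq 1}(X)\cap W_{0,\Phi}$ of $\whPhi N$, equipped with the morphism $\alpha \colon T \hookrightarrow \whPhi N \to E[1]$ coming from the rotated triangle, satisfies the threshold \eqref{eq:vanishingV2eqn} (the constant $\tfrac{2}{a(ha+2b)H_B^2}$ being exactly what the leading coefficients of \eqref{eq:oZwFcomplete} and the constant $C$ dictate), and $\Phi\alpha$ is a $\Bl$-injection; condition (V) then forces $T = 0$, eliminating the degenerate case. Hence no destabilizing sequence exists and $\Phi E[1]$ is $\oZl$-stable.

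\emph{Part (B).} The decomposition is the torsion pair $(\langle\Fc^l[1], W_{0,\whPhi}\rangle,\, W_{1,\whPhi}\cap\Tc^{l,0})$ induced by the triple \eqref{eq:Bltorsiontriple1}. For $\whPhi F''[1]$: property \ref{para:TFlastdefinitions}(iv) already shows it is a torsion sheaf. Since $f\ch_1(F) > 0$ gives $\phi(F) \to 0$, the $\oZl$-semistability of $F$ forces the quotient $F''$ to have $\phi(F'') \succeq 0$; by the phase computations in the paragraph on $W_{1,\whPhi}\cap\Tc^{l,0}$ this kills every component of limiting phase $-\tfrac12$, namely the nonzero-rank part and the $2$-dimensional pieces with $H_B\ol\eta < 0$. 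Each surviving $2$-dimensional piece then has $\ol\eta = 0$ by Corollary \ref{cor:petaeffective}, hence is vertical, and \eqref{eq:chwhPhiF} shows its transform has vanishing $\ch_0$ and $\ch_1$; therefore $\whPhi F''[1] \in \Coh^{\leq 1}(X)$. For $\whPhi F'$: since $\Fc^l \subseteq W_{1,\whPhi}$ (\ref{para:TlFlbasicproperties}(v)), both $\whPhi(\Fc^l[1])$ and $\whPhi(W_{0,\whPhi})$ lie in degree $0$, so $\whPhi F'$ is a genuine sheaf, and torsion-freeness follows because a torsion subsheaf would transform into a subobject of $F' \subseteq F$ of limiting phase $\tfrac12$, contradicting $\phi(F) \to 0$. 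Finally, $\mu_{\bo,B}$-semistability is obtained by transferring a hypothetical destabilizing subsheaf $G \subset \whPhi F'$ through $\Phi$: one checks $\Phi G[1]$ embeds into $\Phi(\whPhi F')[1] = F' \hookrightarrow F$, and then \eqref{eq:ch1EImoZwPhiE1asympequiv} turns $\mu_{\bo,B}(G) > \mu_{\bo,B}(\whPhi F')$ into $\phi(\Phi G[1]) \succ \phi(F)$, contradicting the $\oZl$-semistability of $F$.

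\emph{Main obstacle.} The crux is the degenerate case in Part (A): once the leading $O(v)$ imaginary parts of $\oZw(M)$ and $\oZw(\Phi E[1])$ coincide, one must descend to the sub-leading data measured by $\mu_\ast$, verify that the obstructing subquotient genuinely lands in $\Coh^{\leq 1}(X)\cap W_{0,\Phi}$, produce the morphism $\alpha$ whose image under $\Phi$ is an honest $\Bl$-injection, and match the exact threshold \eqref{eq:vanishingV2eqn}. The remaining steps are leading-order asymptotics in $v$ of \eqref{eq:oZwFcomplete} combined with the WIT bookkeeping of \ref{para:TFlastdefinitions} and the phase catalogue of \ref{para:phasecomputations}.
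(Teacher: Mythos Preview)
Your overall strategy for Part~(A) --- transport a destabilising $\Bl$-sequence $0\to M\to \Phi E[1]\to N\to 0$ through $\whPhi$ and contradict the slope stability of $E$ --- matches the paper's, and your Part~(B) sketch is close to the content of Lemmas~\ref{lem:paper14lem5-8analogue} and~\ref{lem:paper14lem5-9analogue}. However, your handling of the degenerate case in~(A) contains a genuine error in the identification of $T$ and $\alpha$.

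\textbf{The object $T$ is not a subsheaf of $\whPhi N$.} In the paper's proof, the degenerate case arises precisely when $H^0(\whPhi M)=\whPhi^0 M=0$ and $\dim\wh{M}\leq 1$ (where $\wh{M}=\whPhi M[1]=H^1(\whPhi M)$). In that situation $T=\wh{M}\in\Coh^{\leq 1}(X)\cap W_{0,\Phi}$, and the morphism $\alpha:T\to E[1]$ is simply the shift of $\whPhi M\to E$. Then $\Phi\alpha:\Phi T\to\Phi E[1]$ is exactly the original $\Bl$-injection $M\hookrightarrow \Phi E[1]$, so the hypothesis ``$\Phi\alpha$ is a $\Bl$-injection'' in~(V) is satisfied tautologically. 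Your construction ``$\alpha:T\hookrightarrow\whPhi N\to E[1]$'' cannot work: there is no map $\whPhi N\to E[1]$ in the triangle (the maps go $\whPhi M\to E\to\whPhi N\to\whPhi M[1]$), and the torsion \emph{subsheaf} of $\whPhi N$ has no reason to be $\Phi$-WIT$_0$ or to satisfy the threshold~\eqref{eq:vanishingV2eqn}. The object $\wh{M}$ appears as the \emph{quotient} $\whPhi N/E$ in the four-term exact sequence, not as a subsheaf.

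\textbf{The non-degenerate cases do not all reduce to a slope inequality for $H^0(\whPhi M)\subset E$.} The paper runs a case analysis on $\ch_0(\image(E\to\whPhi N))$ and on $\dim\whPhi^1 M$. Several branches (e.g.\ $\ch_0(\image\alpha)=0$, $\ch_1(\image\alpha)=0$, $\dim\whPhi^1 M\leq 1$) are settled by showing the quotient $N$ is a $\whPhi$-WIT$_0$ fiber sheaf with $\phi(N)=\tfrac12$, not by comparing slopes of subsheaves of $E$. Your description ``unless the imaginary parts agree to leading order'' collapses these distinct mechanisms and does not pin down the actual trigger for~(V), which is $\whPhi^0 M=0$ together with $\dim\wh{M}\leq 1$. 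Once that case is isolated, the threshold~\eqref{eq:vanishingV2eqn} emerges from comparing $\oZw(M)\sim uH_B\ol\eta\,(v+i\mu_{\ast,B}(\wh{M}))$ with $\oZw(\Phi E[1])\sim\tfrac12 H_B^2\ch_0(E)\,(v^2+i\tfrac{2}{a(ha+2b)H_B^2}\mu_{\bo,B}(E)v)$ as in~\eqref{eq:thm1-oZwG}--\eqref{eq:thm1-oZwF}.
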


In the proof of Theorem \ref{thm:paper14thm5-1analogue}(A), we will often consider the twisted slope function $\mu_{\bo,B}$ instead of $\mu_\bo$.

\subparagraph[Overview of the rest of this section] The rest of this section is devoted to the proof of Theorem \ref{thm:paper14thm5-1analogue}, as well as  examples of sheaves and objects that satisfy the conditions in Theorem \ref{thm:paper14thm5-1analogue}.  The strategy of the proof of Theorem \ref{thm:paper14thm5-1analogue}(A) is simple: Given  a $\mu_{\bo,B}$-stable torsion-free sheaf $E$ satisfying the vanishing condition (V), we consider a $\Bl$-subobject $G$ of $F$.  We then show that $\phi (G) \prec \phi (F/G)$ except in the case where $\dimension \wh{G} \leq 1$, which can be ruled out by the vanishing condition.

After we present the proof of Theorem \ref{thm:paper14thm5-1analogue}(A), we give some examples of slope semistable sheaves that satisfy the vanishing condition (V).  These include torsion-free reflexive sheaves (Corollary \ref{cor:thm1reflexiveexample}).  That is, given any torsion-free reflexive sheaf $E$ on a Weierstra{\ss} elliptic threefold, even though its transform $\Phi E$ may be  a 2-term complex in general, but it is always a $\oZl$-stable (i.e.\ limit tilt stable) object in $\Bl$.

In Corollary \ref{cor:TodaLinvobjconnection} and Remark \ref{rem:TodaHallAlgconnection}, we point out that given a variation of Pandharipande-Thomas' stable pairs that appears in Toda's higher-rank DT/PT correspondence formula \cite{Toda2}, the left cohomology of the stable pair is always a slope-semistable torsion-free sheaf that satisfies the vanishing condition (V).  This means that given a  stable pair $E \in D^b(X)$ considered in \cite{Toda2},  if we modify $E$ in codimension 2, then $E$ is taken by $\Phi$ to a $\oZl$-semistable object.

As for the proof of Theorem \ref{thm:paper14thm5-1analogue}(B), its outline is the same as that of  \cite[Theorem 5.1B]{Lo14}, while the technical ingredients are given in Lemmas \ref{lem:paper14lem5-4analogue} through \ref{lem:paper14lem5-9analogue}.  The strategy of proof for part (B) is as follows: Having written a $\oZl$-semistable object $F$ as an extension of an  object $F''$ by another object $F'$, we show that $\whPhi$ takes $F''$ to a torsion sheaf supported in codimension at least 2, while $\whPhi$ takes $F'$ to a slope semistable torsion-free sheaf.

\begin{proof}[Proof of Theorem \ref{thm:paper14thm5-1analogue}(A)]
We follow the outline of the proof of \cite[Theorem 5.1(A)]{Lo14}.  Let $E$ be as described in the hypotheses, and write $F = \Phi E[1]$.    Since $\ch_0(E)>0$, we have $f\ch_1(F)>0$, and so $\phi (F) \to 0$.  We have $F \in \Bl$ from \ref{para:TFlastdefinitions}(v).

Take any $\Bl$-short exact sequence of the form
\begin{equation}
0 \to G \to F \to F/G \to 0
\end{equation}
where $G \neq 0$.  This gives an exact sequence of sheaves
\begin{equation}\label{eq:les1}
0 \to \wh{\Phi}^0 G \to E \overset{\alpha}{\to} \wh{\Phi}^0 (F/G) \to \wh{\Phi}^1G \to 0
\end{equation}
and $\wh{\Phi}^1 (F/G)=0$.  From the torsion triple \eqref{eq:Bltorsiontriple1} in $\Bl$, we also have the decomposition of $G$ by an exact triangle
\[
  \Phi (\wh{\Phi}^0 G)[1] \to G \to \Phi (\wh{\Phi}^1 G) \to \Phi (\wh{\Phi}^0 G) [2]
\]
where $\Phi (\wh{\Phi}^0 G)[1] \in \langle \Fc^l [1], W_{0,\wh{\Phi}} \rangle$ and $\Phi (\wh{\Phi}^1 G) \in W_{1,\wh{\Phi}} \cap \Tc^l$.

\textbf{Suppose $\ch_0 (\image \alpha) =0$.}  Then $\ch_0 (\wh{\Phi}^0 G) = \ch_0(E)>0$, giving $f\ch_1 (\Phi (\wh{\Phi}^0 G)[1]) > 0$.  We divide further into two cases:
\begin{itemize}
\item[(a)] $\ch_1 (\image \alpha) \neq 0$.  In this case, $\mu_{\bo,B} (\wh{\Phi}^0G) < \mu_{\bo,B}(E)$, which gives $\phi (\Phi (\wh{\Phi}^0 G)[1]) \prec \phi (F)$.  On the other hand, we know $\Phi (\wh{\Phi}^1 G) \in W_{1,\wh{\Phi}} \cap \Tc^l$ from above.  By \ref{para:TFlastdefinitions}(iv), we have $\whPhi^1 G \in \Coh^{\leq 2}(X)$.
    \begin{itemize}
    \item[(i)] If $\dimension \whPhi^1 G = 2$, then $\bo \ch_1^B(\whPhi^1 G) = \bo \ch_1 (\whPhi^1 G) > 0$.  From the computation in \ref{eq:ImoZwPhiE1bo2ch1BEcomparison} we know $\Im \oZw (\Phi (\whPhi^1 G)[1])$ is $O(v)$ and is positive as $v \to \infty$.  Thus $\Im \oZw (\Phi (\whPhi^1 G))$ is $O(v)$ and is negative as $v \to \infty$.  Overall, we have $\phi (G) \prec \phi (F)$.
    \item[(ii)] If $\dimension \whPhi^1 G \leq 1$, then $\ch (\whPhi^1 G) = {\left( \begin{smallmatrix} 0 & 0 & \ast \\ 0 & \ast & \ast \end{smallmatrix}\right)}$ and so  $\ch (\Phi (\whPhi^1 G)) = {\left( \begin{smallmatrix} 0 & \ast & \ast \\ 0 & 0 & \ast \end{smallmatrix}\right)}$.  This means that $\oZw (\Phi (\whPhi^1 G))$ itself has order of magnitude at most $O(1)$ as $v \to \infty$ while $\Phi (\whPhi^0 G)[1]$ has order of magnitude $O(v^2)$.  Overall, we have $\phi (G) \prec \phi (F)$.
    \end{itemize}
\item[(b)] $\ch_1 (\image \alpha) =0$.  Then $\image \alpha \in \Coh^{\leq 1}(X)$, and $\ch_0^B, \ch_1^B$ of $\whPhi^0 G, E$ agree, and so $\Phi (\whPhi^0 G)[1], F$ agree in the following components of $\ch$ marked with $\ast$'s: ${\left( \begin{smallmatrix} \ast & \cdot & \cdot \\ \ast & \ast & \cdot \end{smallmatrix}\right)}$.  As a consequence, the central charges $\oZw (\Phi (\whPhi^0 G)[1]), \oZw (F)$ agree in the terms with $O(v^2)$ and  $O(v)$  coefficients, which are the highest orders of magnitudes that occur in $\oZw$.

    On the other hand, we have $\whPhi^1 G \in \Coh^{\leq 2}(X)$ as in (a).
    \begin{itemize}
    \item[(i)] If $\dimension \whPhi^1 G = 2$, then by the same argument as in (a)(i), we know $\oZw (\Phi (\whPhi^1 G))$ is $O(v)$, and the $O(v)$ terms are all part of $\Im \oZw (\Phi (\whPhi^1 G))$, which is negative for $v \gg 0$.  As a result, $\phi (G) \prec \phi (F)$.
    \item[(ii)] If $\dimension \whPhi^1 G \leq 1$, then $\whPhi^0 (F/G)$ also lies in $\Coh^{\leq 1}(X)$.  Thus $\whPhi^0 (F/G)$ is a $\Phi$-WIT$_1$ sheaf in $\Coh^{\leq 1}(X)$, forcing $\whPhi^0 (F/G)$ to be a $\Phi$-WIT$_1$ fiber sheaf \cite[Remark 3.14, Lemma 3.15]{Lo11}.  Hence $F/G$ is a $\whPhi$-WIT$_0$ fiber sheaf, and $\phi (F/G)=\tfrac{1}{2}$.  On the other hand, since $f\ch_1 (G)=f\ch_1 (F)>0$, we have $\phi (G) \to 0$.  Hence $\phi (G) \prec \phi (F/G)$ overall.
    \end{itemize}
\end{itemize}

\textbf{Suppose $\ch_0 (\image \alpha) > 0$.} If $\whPhi^0 G \neq 0$, then $0 < \ch_0 (\whPhi^0 G) < \ch_0 (E)$, which gives $\mu_{\bo,B}(\whPhi^0 G) < \mu_{\bo,B} (E)$.  We still have $\whPhi^1 (G) \in \Coh^{\leq 2}(X)$, and the same argument as in part (a) above would give $\phi (G) \prec \phi (F)$.  Therefore, let us  assume $\whPhi^0 G=0$ from now on, in which case the exact sequence \eqref{eq:les1} reduces to
\[
0 \to E \to \whPhi^0 (F/G) \to \whPhi^1 G \to 0.
\]
Since $\whPhi^0 G=0$, we have $G = \Phi (\whPhi^1 G) \in W_{1,\whPhi} \cap \Tc^l$ and $\wh{G} = \whPhi G [1] \in \Coh^{\leq 2}(X) \cap W_{0,\Phi}$.
\begin{itemize}
\item[(i)] If $\dimension \wh{G}=2$, then $\bo^2 \ch_1^B(\wh{G})>0$ and the computation in (a)(i) gives $\phi (G) \to -\tfrac{1}{2}$, which implies $\phi (G) \prec \phi (F)$.
\item[(ii)] If $\dimension \wh{G} \leq 1$, let us write $\ch (\wh{G}) = {\left( \begin{smallmatrix} 0 & 0 & \ol{a}f \\ 0 & \Theta p^\ast \ol{\eta} & \ol{s}\end{smallmatrix}\right)}$.  Then $\ch^B(\wh{G}) = {\left( \begin{smallmatrix} 0 & 0 & \ol{a}f \\ 0 & \Theta p^\ast \ol{\eta}\,\,\, & \ol{s}+\tfrac{1}{2}K_B\ol{\eta}\end{smallmatrix}\right)}$ and hence
     $\ch (G) = {\left( \begin{smallmatrix} 0 & p^\ast \ol{\eta}\,\,\, & (\ol{s}+\tfrac{1}{2}K_B\ol{\eta})f \\ 0 & 0 & -\ol{a}\end{smallmatrix}\right)}$. We consider the following two possibilities:

    Case (1):  $p^\ast \ol{\eta}=0$.    Then $G$ is a $\whPhi$-WIT$_1$ fiber sheaf  and $\phi (G) = \tfrac{1}{2} \succ \phi (F)$.  In this case
    \begin{equation}\label{eq:mainthmAHomisomstrings}
    \Hom (G,F)\cong\Hom (\Phi \wh{G}, \Phi E[1]) \cong \Hom (\wh{G},E[1])
     \end{equation}
     where $\wh{G}$ is a $\Phi$-WIT$_0$ fiber sheaf, in which case $\mu_{\ast,B}(\wh{G})=\infty$.  Thus the vanishing condition (V) prevents this case from occurring since we are assuming $G$ is nonzero.

    Case (2):  $p^\ast \ol{\eta} \neq 0$.  Then $p^\ast\ol{\eta} = \ch_1 (G)$ is effective since $G$ is a coherent sheaf.  Since $\wh{G} \in \Coh^{\leq 1}(X)$, the proof of Lemma \ref{lem:cohleq10dimfiberpositivity} gives $H_B \ol{\eta} > 0$.  Now
    \begin{align}
      \oZw (G) &= \tfrac{\omega^2}{2}\ch_1(G) + i\omega \ch_2 (G) \notag\\
       &=   \tfrac{u(hu+2v)}{2}H_B\ol{\eta} + i    u(\ol{s} + \tfrac{1}{2}K_B\ol{\eta}) \notag \\
      &= uvH_B\ol{\eta} + O(\tfrac{1}{v^2})  + i u(\ol{s} + \tfrac{1}{2}K_B\ol{\eta}) \notag\\
      &= uH_B\ol{\eta} \left( v + O(\tfrac{1}{v}) + i \cdot \tfrac{\ol{s}+(1/2)hH_B\ol{\eta}}{H_B\ol{\eta}} \right) \notag\\
      &=    uH_B\ol{\eta} \left( v + O(\tfrac{1}{v}) + i \left( \tfrac{\ol{s}}{H_B\ol{\eta}} + \tfrac{h}{2}\right)\right) \notag\\
      &=    uH_B\ol{\eta} \left( v + O(\tfrac{1}{v}) + i \mu_{\ast,B}(\wh{G})\right). \label{eq:thm1-oZwG}
    \end{align}

    On the other hand,
    \begin{equation*}
      \oZw (F) = \oZw (\Phi E [1])
      = \Re \oZw (\Phi E [1]) + i \Im \oZw (\Phi E [1])
    \end{equation*}
    where, for $v \gg 0$ under the constraint \eqref{eq:limitcurve},
    \begin{align*}
      \Re \oZw (\Phi E [1]) &= \tfrac{v^2}{2}H_B^2 \ch_0(E) + O(1) \text{\quad from \ref{eq:oZwFcomplete}}, \\
      \Im \oZw (\Phi E [1]) &= \left( \tfrac{(\omega^3/6)}{\Theta \bo^2 }\cdot \bo^2 \ch_1^B(E) - \omega_1 A_3 \right) \text{\quad from \ref{eq:ImoZwPhiE1bo2ch1BEcomparison}} \\
      &= \left( \tfrac{(\omega^3/6)}{\Theta \bo^2 }\cdot \bo^2 \ch_1^B(E) - O(\tfrac{1}{v}) \right)
    \end{align*}
We also have $\Theta \bo^2 = (ha+b)^2H_B^2$ from \ref{para:solveChoweq} while
    \begin{align*}
    \tfrac{\omega^3}{6} &= \tfrac{1}{6}u(h^2u^2 +3huv+3v^2)H_B^2 \text{\quad also from \ref{para:solveChoweq}} \\
    &= \tfrac{(hu+v)(ha+b)^2}{a(ha+2b)}H_B^2 \text{\quad by \eqref{eq:limitcurve}},
    \end{align*}
    giving us
    \[
    \tfrac{(\omega^3/6)}{\Theta \bo^2 } = \tfrac{hu+v}{a(ha+2b)}.
    \]
    Overall,
    \begin{align}
      \oZw (F) &= \tfrac{v^2}{2}H_B^2\ch_0(E) + O(1) + i \left( \tfrac{1}{a(ha+2b)}\bo^2\ch_1^B(E)v + O(\tfrac{1}{v})\right) \notag\\
      &= \tfrac{1}{2}H_B^2\ch_0(E) \left( v^2 + O(1) + i \left( \tfrac{2}{a(ha+2b)H_B^2} \mu_{\bo,B} (E) v + O(\tfrac{1}{v})\right)\right). \label{eq:thm1-oZwF}
    \end{align}
    Comparing \eqref{eq:thm1-oZwG} with \eqref{eq:thm1-oZwF} and using the string of isomorphisms \eqref{eq:mainthmAHomisomstrings}, we see that the vanishing  (V) ensures we  have $\mu_{\ast,B}(\wh{G}) < \tfrac{2}{a(ha+2b)H_B^2} \mu_{\bo,B} (E)$ and hence  $\phi (G) \prec \phi (F)$ in this case.
\end{itemize}
Hence $F$ is $\oZl$-stable.
\end{proof}

\begin{remsub}
One can reasonably expect to obtain a finer version of Theorem \ref{thm:paper14thm5-1analogue}(A) by studying the $O(\tfrac{1}{v^2})$ term in $\oZw (G)$ and the $O(v)$ term in $\oZw (F)$ in its proof more closely, although we will not pursue this in this article.
\end{remsub}

\paragraph For \label{para:1dimEslopefunction} any polarisation $\bo$ on $X$ and any $B$-field $B \in A^1(X)$, we will consider another slope function $\ol{\mu}_{\bo,B}$ on $\Coh^{\leq 1}(X)$ by setting, for any $E \in \Coh^{\leq 1}(X)$,
\[
  \ol{\mu}_{\bo,B} (E) = \begin{cases}
  \tfrac{\ch_3^B(E)}{\bo\ch_2^B(E)} &\text{ if $\bo\ch_2^B(E)\neq 0$} \\
  \infty &\text{ if $\bo\ch_2^B(E)=0$}
  \end{cases}.
\]
It is clear that $\ol{\mu}_{\bo,B}$ is a slope function on $\Coh^{\leq 1}(X)$ with the Harder-Narasimhan property.  Let us build on the notation in \cite[Section 3.3]{Toda2} and define
\[
\mathcal{C}^B_{(0,\infty]} = \langle A \in \Coh^{\leq 1}(X): \text{ $A$ is $\ol{\mu}_{\bo,B}$-semistable and $\ol{\mu}_{\bo,B}(A) \in (0,\infty]$}\rangle.
\]
When the $B$-field is zero, we will simply write $\mathcal{C}_{(0,\infty]}$ for $\mathcal{C}^B_{(0,\infty]}$.

\begin{lemsub}\label{lem:C0inftyinWIT0Phi}
For any $B \in A^1(X)$ we have $\mathcal{C}^B_{(0,\infty]} \subseteq \Coh^{\leq 1}(X) \cap W_{0,\Phi}$.
\end{lemsub}

\begin{proof}
Take any $E \in \mathcal{C}^B_{(0,\infty]}$ and consider the short exact sequence in $\Coh (X)$
\[
 0 \to E' \to E \to E'' \to 0
\]
where $E'$ is the maximal subsheaf lying in $\{\Coh^{\leq 0}\}^\uparrow$.  Then  $E''$ is necessarily  a fiber sheaf all of whose $\ol{\mu}_{\bo,B}$-HN factors have $\ch_3^B =\ch_3>0$ (note that $\ch_3^B=\ch_3$ for fiber sheaves).  This implies $E''$ is $\Phi$-WIT$_0$ \cite[Proposition 6.38]{FMNT}.  On the other hand, $E'$ is $\Phi$-WIT$_0$ by \cite[Remark 3.14]{Lo11}.  Hence $E$ itself is $\Phi$-WIT$_0$.
\end{proof}

\begin{lem}\label{para:vanishingV1plus2variant1}
 Suppose $a,b>0$ satisfy \eqref{eq:constraint1} and we take the $B$-field to be $B=-\tfrac{1}{2}p^\ast K_B$.  Suppose $E$ is a torsion-free sheaf on $X$ with $\mu_{\bo,B}(E)>0$.  Then
\begin{align*}
\{ T \in \Coh^{\leq 1}(X) \cap W_{0,\Phi} : \, &T \text{ satisfies \eqref{eq:vanishingV2eqn}}\} \\
&\subseteq
\{ T \in \Coh^{\leq 1}(X) \cap W_{0,\Phi} : \mu_{\ast,B} (T) > 0 \} \\
&= \{ T \in \Coh^{\leq 1}(X) \cap W_{0,\Phi} : \text{ $T$ is a fiber sheaf or $\ch_3^B(T)>0$}\} \\
&= \{ T \in \Coh^{\leq 1}(X) \cap W_{0,\Phi} : \ol{\mu}_{\bo,B}(T)>0\}.
\end{align*}
\end{lem}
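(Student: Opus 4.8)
The plan is to verify the displayed chain in three independent pieces --- the single inclusion, the first equality, and the second equality --- working throughout inside $\Coh^{\leq 1}(X)\cap W_{0,\Phi}$. For such a $T$, writing $\ch_2(T)=\Theta p^\ast\ol{\eta}+\ol{a}f$ and $\ch_3(T)=\ol{s}$, I record from \ref{para:muastBdefiition} that $\ch_2^B(T)=\ch_2(T)$, that $(p^\ast H_B)\ch_2^B(T)=H_B\ol{\eta}$, and that $\ch_3^B(T)=\ol{s}+\tfrac12 K_B\ol{\eta}$.

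The inclusion is a pure sign check. By \eqref{eq:constraint1} we have $ha+2b>0$, so with $a>0$ and $H_B^2>0$ the factor $\tfrac{2}{a(ha+2b)H_B^2}$ is strictly positive; since $\mu_{\bo,B}(E)>0$ by hypothesis, the right-hand side of \eqref{eq:vanishingV2eqn} is positive, whence any $T$ satisfying \eqref{eq:vanishingV2eqn} has $\mu_{\ast,B}(T)>0$.

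For the first equality I would split on whether $T$ is a fiber sheaf. By \ref{para:1dimsheaffibercriterion}, $\mu_{\ast,B}(T)=+\infty$ precisely when $T$ is a fiber sheaf, so every fiber sheaf lies in both sets. If $T$ is not a fiber sheaf then $\ol{\eta}\neq 0$ is effective by Lemma \ref{lem:1dimshch2etaeffective}, so $(p^\ast H_B)\ch_2^B(T)=H_B\ol{\eta}>0$ by Lemma \ref{lem:cohleq10dimfiberpositivity} and ampleness of $H_B$; the denominator being positive, $\mu_{\ast,B}(T)>0$ is then equivalent to $\ch_3^B(T)>0$, which is exactly membership in the middle set. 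This settles the equality in both directions.

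The substantive step is the second equality, and the obstacle is that the two slope functions have \emph{different} $+\infty$ loci: $(p^\ast H_B)\ch_2^B(T)$ vanishes on every fiber sheaf, including $1$-dimensional ones, whereas $\bo\,\ch_2^B(T)=\bo\,\ch_2(T)$ vanishes only when $\dim T=0$, since for $\dim T=1$ the class $\ch_2(T)$ is a nonzero effective $1$-cycle and $\bo$ is ample. The fact that reconciles this is precisely the positivity recorded in \ref{para:mulowerastfunctiondefinition}: every nonzero $\Phi$-WIT$_0$ fiber sheaf has $\ch_3(T)=\ch_3^B(T)>0$, and this is where the hypothesis $T\in W_{0,\Phi}$ enters. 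Granting it, if $T$ lies in the middle set and $\dim T=0$ then $\bo\,\ch_2^B(T)=0$ and $\ol{\mu}_{\bo,B}(T)=+\infty$, while if $\dim T=1$ then $\bo\,\ch_2^B(T)>0$ and $\ch_3^B(T)>0$ holds either by assumption or, when $T$ is a fiber sheaf, by the positivity fact, so $\ol{\mu}_{\bo,B}(T)>0$. Conversely, $\ol{\mu}_{\bo,B}(T)>0$ forces either $\bo\,\ch_2^B(T)=0$, hence $\dim T=0$ and $T$ a fiber sheaf, or $\bo\,\ch_2^B(T)>0$ together with $\ch_3^B(T)>0$; in either case $T$ lies in the middle set. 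The entire argument stands or falls on having this $\ch_3$-positivity of $\Phi$-WIT$_0$ fiber sheaves available, so that is the step I would be most careful to invoke correctly.
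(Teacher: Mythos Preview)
Your proof is correct and follows essentially the same approach as the paper's own proof: the inclusion via the sign check on $\tfrac{2}{a(ha+2b)H_B^2}\mu_{\bo,B}(E)>0$, and both equalities via the fiber-sheaf/non-fiber-sheaf dichotomy, with the key input being the positivity $\ch_3^B(T)=\ch_3(T)>0$ for nonzero $\Phi$-WIT$_0$ fiber sheaves. The paper organises the case analysis slightly differently (separating $0$-dimensional and $1$-dimensional fiber sheaves explicitly in the second equality), but the logic is the same.
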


\begin{proof}
The inclusion in the lemma is clear.  To see why the first equality  is true, take any $T \in \Coh^{\leq 1}(X) \cap W_{0,\Phi}$.
\begin{itemize}
\item If $T$ is a fiber sheaf, then $\mu_{\ast,B}(T)=\infty$; if $T$ is not a fiber sheaf but $\ch_3^B(T)>0$, then $(p^\ast H_B)\ch_2^B(T)>0$ by \ref{para:1dimsheaffibercriterion} and hence $\mu_{\ast,B}(T)>0$.  This gives the inclusion $\supseteq$ in the first equality in the lemma.
\item If $\mu_{\ast,B}(T)>0$ and  $T$ is not a fiber sheaf, then  $(p^\ast H_B)\ch_2^B(T)>0$ by \ref{para:1dimsheaffibercriterion} as before, and we have $\ch_3^B(T)>0$.  This gives the inclusion $\subseteq$ in the first equality.
\end{itemize}

To see why the second equality  is true, take any $0 \neq T \in \Coh^{\leq 1}(X) \cap W_{0,\Phi}$.
\begin{itemize}
\item If $T$ is a 0-dimensional fiber sheaf, then $\ol{\mu}_{\bo,B}(T)=\infty$.  If $T$ is a 1-dimensional fiber sheaf, then $\ch_3^B(T)=\ch_3(T)>0$ because $T$ is $\Phi$-WIT$_0$ \cite[Proposition 6.38]{FMNT}, and hence $\ol{\mu}_{\bo,B}(T)>0$.  If $T$ is not a fiber sheaf but $\ch_3^B(T)>0$, then $T$ must be supported in dimension 1 and $\ol{\mu}_{\bo,B}(T)>0$.  This proves the inclusion $\subseteq$ in the second equality in the lemma.
\item If $\ol{\mu}_{\bo,B}(T)>0$ and $T$ is not a fiber sheaf, then $T$ must be supported in dimension 1, i.e.\ $\bo \ch_2^B(T)>0$, giving us $\ch_3^B(T)>0$.  This proves the inclusion $\supseteq$ in the second equality.
\end{itemize}
\end{proof}

\begin{remsub}\label{rem:VprimeimpliesV}
By Lemma \ref{para:vanishingV1plus2variant1}, if $E$ is a torsion-free sheaf on $X$ with $\mu_{\bo,B}(E)>0$, then the vanishing condition     (V)  in Theorem \ref{thm:paper14thm5-1analogue}(A) follows from the  vanishing condition
\begin{itemize}
\item[(V')] Given any $T \in  \Coh^{\leq 1}(X)\cap W_{0,\Phi}$ satisfying $\ol{\mu}_{\bo,B}(T)>0$, that comes with some morphism $\alpha \in \Hom (T,E[1])$ such that $\Phi \alpha$ is a $\Bl$-injection, $T$ must be zero.
\end{itemize}
\end{remsub}

\begin{cor}\label{cor:TodaLinvobjconnection}
Let $p : X \to B$ be a Weierstra{\ss} elliptic threefold satisfying assumptions (1)-(3) in \ref{para:ellipfibdef}.  Fix  any $a,b>0$ satisfying \eqref{eq:constraint1}, and fix the $B$-field $B=-\tfrac{1}{2}p^\ast K_B$.  Then for any $\mu_{\bo,B}$-stable torsion-free sheaf $E$ satisfying $\mu_{\bo,B}(E)>0$ and the vanishing
\begin{equation}\label{eq:paper14thm5-1Aanacor1vanishing}
  \Hom (\mathcal{C}^B_{(0,\infty]},E[1])=0,
\end{equation}
the object $\Phi E [1]$ lies in $\Bl$ and is  $\oZl$-stable.
\end{cor}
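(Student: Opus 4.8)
The plan is to deduce the corollary from Theorem~\ref{thm:paper14thm5-1analogue}(A). Since $E$ is a $\mu_{\bo,B}$-stable torsion-free sheaf with $\mu_{\bo,B}(E)>0$, by Remark~\ref{rem:VprimeimpliesV} the condition (V) of that theorem is implied by the condition (V'), so the only thing to check is (V'); once (V') (hence (V)) holds, Theorem~\ref{thm:paper14thm5-1analogue}(A) gives at once that $\Phi E[1]\in\Bl$ and is $\oZl$-stable. Thus the entire content of the proof is to show that the hypothesis $\Hom(\mathcal{C}^B_{(0,\infty]},E[1])=0$ forces (V').

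To verify (V') I would argue by contradiction. Suppose $T\in\Coh^{\leq 1}(X)\cap W_{0,\Phi}$ is nonzero with $\ol{\mu}_{\bo,B}(T)>0$ and carries some $\alpha\in\Hom(T,E[1])$ with $\Phi\alpha$ a $\Bl$-injection. The subtlety to overcome is that the vanishing hypothesis only controls morphisms out of the torsion class $\mathcal{C}^B_{(0,\infty]}$, whereas (V') allows a general $T$ of positive slope, which need \emph{not} lie in $\mathcal{C}^B_{(0,\infty]}$ (its $\ol{\mu}_{\bo,B}$-Harder--Narasimhan filtration may have factors of non-positive slope). Since $\ol{\mu}_{\bo,B}$ has the HN property, $\mathcal{C}^B_{(0,\infty]}$ is a torsion class in $\Coh^{\leq 1}(X)$; let $T^+\subseteq T$ be the associated maximal subsheaf, so $T^+\in\mathcal{C}^B_{(0,\infty]}$. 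If $T^+=0$ then every $\ol{\mu}_{\bo,B}$-HN factor of $T$ has slope $\leq 0$, forcing $\ol{\mu}_{\bo,B}(T)\leq 0$, a contradiction; hence $T^+\neq 0$. By Lemma~\ref{lem:C0inftyinWIT0Phi}, $T^+\in W_{0,\Phi}$, and from the short exact sequence $0\to T^+\to T\to T/T^+\to 0$ together with $T,T^+\in W_{0,\Phi}$ one reads off from the long exact sequence that $T/T^+\in W_{0,\Phi}$ as well. Because all three sheaves lie in $\Coh^{\leq 1}(X)$ we have $\ch_0(\Phi(-))=f\ch_1(-)=0$ on each, so $\wh{T^+}=\Phi T^+$, $\wh{T}=\Phi T$ and $\wh{T/T^+}=\Phi(T/T^+)$ are all torsion sheaves, and hence all lie in $\Tc^l\subseteq\Bl$ by \ref{para:TlFlbasicproperties}(i).

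Consequently $0\to\wh{T^+}\to\wh{T}\to\wh{T/T^+}\to 0$ is an exact sequence in $\Coh(X)$ all of whose terms lie in $\Tc^l\subseteq\Bl$, so it is automatically a short exact sequence in the heart $\Bl$; in particular $\wh{T^+}\to\wh{T}$ is a $\Bl$-injection. Composing it with the given $\Bl$-injection $\Phi\alpha\colon\wh{T}\to\Phi E[1]$ yields a $\Bl$-injection $\wh{T^+}\to\Phi E[1]$ out of the nonzero object $\wh{T^+}$, which is therefore nonzero; as $\Phi$ is an equivalence and this composite equals $\Phi(\alpha|_{T^+})$, we conclude $\alpha|_{T^+}\neq 0$. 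This exhibits a nonzero element of $\Hom(T^+,E[1])$ with $T^+\in\mathcal{C}^B_{(0,\infty]}$, contradicting the hypothesis. Hence $T=0$, (V') holds, and the corollary follows. I expect the main obstacle to be exactly this reduction: passing from the positive-slope sheaf $T$ of (V') to a genuine subobject in $\mathcal{C}^B_{(0,\infty]}$ on which $\alpha$ stays nonzero, and checking that the required monomorphism persists after applying $\Phi$ and passing to the tilted heart $\Bl$. The observation that every transform in sight is a torsion sheaf, hence lies in $\Tc^l$, is what makes the $\Coh(X)$-exactness descend to $\Bl$-exactness and closes the argument.
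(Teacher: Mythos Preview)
Your proof is correct and follows essentially the same approach as the paper's: reduce to verifying (V') via Remark~\ref{rem:VprimeimpliesV}, pick a nonzero subsheaf of $T$ lying in $\mathcal{C}^B_{(0,\infty]}$, transport the resulting short exact sequence through $\Phi$ into $\Bl$, and compose $\Bl$-injections to contradict the vanishing hypothesis. The only cosmetic difference is that the paper takes $T'$ to be the left-most $\ol{\mu}_{\bo,B}$-HN factor of $T$ rather than your full torsion part $T^+$, and justifies membership in $\Bl$ via $\Coh(p)_{\leq 1}\subset\Bl$ rather than via ``torsion sheaves lie in $\Tc^l$''; both choices work equally well.
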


\begin{proof}
By Remark \ref{rem:VprimeimpliesV} and  Theorem \ref{thm:paper14thm5-1analogue}(A), it suffices to  show that the vanishing \eqref{eq:paper14thm5-1Aanacor1vanishing} implies the vanishing (V') for any  $E$ as described.

Take any $T \in  \Coh^{\leq 1}(X)\cap W_{0,\Phi} $ with $\ol{\mu}_{\bo,B}(T) > 0$, and let $T'$ be the left-most $\ol{\mu}_{\bo,B}$-HN factor of $T$.  Then $T'$, if nonzero,  is a  $\ol{\mu}_{\bo,B}$-semistable sheaf with $\ol{\mu}_{\bo,B}(T')>0$, and so $T'$ lies in $\mathcal{C}_{(0,\infty]}^B$.  From Lemma \ref{lem:C0inftyinWIT0Phi}, we know $T'$ is $\Phi$-WIT$_0$.  Thus all the terms are $\Phi$-WIT$_0$ in the short exact sequence of sheaves
\[
0 \to T' \overset{\gamma}{\to} T \to T/T' \to 0,
\]
and applying $\Phi$ gives the exact sequence of sheaves in $\Coh (p)_{\leq 1}$
\[
0 \to \wh{T'} \overset{\Phi \gamma}{\longrightarrow} \wh{T} \to \wh{T/T'} \to 0
\]
which is also a $\Bl$-short exact sequence since $\Coh (p)_{\leq 1}$ is contained in $\Bl$ by \ref{para:TlFlbasicproperties}(i).  In particular, $\wh{T'}$ is a $\Bl$-subobject of $\wh{T}$.

Now suppose we have a morphism $\alpha \in \Hom (T,E[1])$ such that $\Phi \alpha$ is a $\Bl$-injection.  From the previous paragraph, we have $\Bl$-injections
\[
  \wh{T'} \overset{\Phi\gamma}{\longrightarrow} \wh{T} \overset{\Phi \alpha}{\longrightarrow} \Phi E [1].
\]
The composition $(\Phi \alpha)(\Phi \gamma)=\Phi (\alpha \gamma)$ is a morphism $\wh{T'} \to \Phi E[1]$, i.e.\ $\alpha \gamma$ is a morphism $T' \to E[1]$, which must be zero by the vanishing \eqref{eq:paper14thm5-1Aanacor1vanishing}.  This implies that $\wh{T'}$ must be zero, i.e.\ $T'$ must be zero, i.e.\ $T$ must be zero.  Hence $E$ satisfies the vanishing (V') as desired, finishing the proof of this corollary.
\end{proof}

\begin{rem}\label{rem:TodaHallAlgconnection}
If we replace $\mathcal{C}^B_{(0,\infty]}$ with $\mathcal{C}^B_{[0,\infty]}$ (the additional objects being $\ol{\mu}_{\bo,B}$-semistable sheaves $A$ with $\ol{\mu}_{\bo,B}(A)=0$) and choose the $B$-field to be zero, then the vanishing condition \eqref{eq:paper14thm5-1Aanacor1vanishing} is satisfied by the left cohomology $H^{-1}(E)$, for any $E$ lying in the category $\mathrm{Coh}^L_\mu (X)$ studied by Toda \cite[Section 4]{Toda2}.  The objects $E$ in $\mathrm{Coh}^L_\mu (X)$ are 2-term complexes that give rise to  `$L$-invariants', and their right cohomology $H^0(E)$ lie in $\mathcal{C}_{[0,\infty]}$.
\end{rem}

\begin{cor}\label{cor:thm1reflexiveexample}
Let $p : X \to B$ be a Weierstra{\ss} elliptic threefold satisfying assumptions (1)-(3) in \ref{para:ellipfibdef}.  Fix  any $a,b>0$ satisfying \eqref{eq:constraint1}, and fix the $B$-field $B=-\tfrac{1}{2}p^\ast K_B$.  Suppose  $E$ is a $\mu_{\bo,B}$-stable (torsion-free) reflexive sheaf.  Then $\Phi E [1]$ lies in $\Bl$ and is a $\oZl$-stable object.
\end{cor}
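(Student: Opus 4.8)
The plan is to deduce the corollary directly from Theorem \ref{thm:paper14thm5-1analogue}(A) by checking that a reflexive sheaf automatically satisfies the vanishing condition (V). Since a reflexive sheaf is in particular torsion-free, the object $\Phi E[1]$ already lies in $\Bl$ by \ref{para:TFlastdefinitions}(v), and $E$ is $\mu_{\bo,B}$-stable by hypothesis, so the only point requiring work is (V). I would reduce (V) to the single Ext-vanishing claim that for every $T \in \Coh^{\leq 1}(X)$ one has $\Hom (T,E[1]) = \Ext^1(T,E) = 0$. Granting this, (V) holds for a trivial reason: if $T \in \Coh^{\leq 1}(X)\cap W_{0,\Phi}$ comes with $\alpha \in \Hom(T,E[1])$, then necessarily $\alpha = 0$, so $\Phi\alpha = 0$; as $\Phi$ is an equivalence, the zero morphism $\Phi\alpha$ is a $\Bl$-injection only when $T=0$, which is exactly the conclusion demanded by (V).

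The substance of the argument is thus the Ext-vanishing, which I would establish using the fact that on the smooth threefold $X$ a reflexive sheaf is a second syzygy: dualising a finite presentation of $E^\vee$ by locally free sheaves produces a short exact sequence $0 \to E \to F_0 \to Q \to 0$ in $\Coh(X)$ with $F_0$ locally free and $Q$ torsion-free (realise $Q$ as the image of a map $F_0 \to F_1$ between locally free sheaves). Applying $\Hom(T,-)$ gives the exact sequence
\[
\Hom(T,Q) \to \Ext^1(T,E) \to \Ext^1(T,F_0).
\]
The left-hand term vanishes because $T$ is torsion while $Q$ is torsion-free. For the right-hand term, $T$ is supported in dimension $\leq 1$, hence in codimension $\geq 2$, so $\EExt^j(T,\OO_X) = 0$ for $j < 2$; the local-to-global spectral sequence then yields $\Ext^1(T,\OO_X) = 0$, whence $\Ext^1(T,F_0) = 0$ since $F_0$ is locally free. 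Therefore $\Ext^1(T,E) = 0$, as claimed.

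With the vanishing in hand, condition (V) is verified for $E$, and Theorem \ref{thm:paper14thm5-1analogue}(A) immediately gives that $\Phi E[1] \in \Bl$ is $\oZl$-stable. The main (and essentially only) obstacle is the Ext-vanishing step: although classical, it is important to invoke the right characterisation of reflexivity on a regular scheme (the second-syzygy presentation, equivalently the $S_2$ condition) together with the codimension bound on $\EExt^\bullet(T,\OO_X)$, and to notice that this argument uses no positivity hypothesis, so that it proves (V) without any assumption on the sign of $\mu_{\bo,B}(E)$ — in contrast to the route through Corollary \ref{cor:TodaLinvobjconnection}, which required $\mu_{\bo,B}(E)>0$.
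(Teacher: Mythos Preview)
Your proof is correct and follows essentially the same route as the paper: both reduce to the vanishing $\Hom(\Coh^{\leq 1}(X),E[1])=0$ for reflexive $E$ on a smooth threefold, from which condition (V) follows immediately. The only difference is that the paper obtains this vanishing by citing \cite[Lemma 4.20]{CL}, whereas you supply a direct second-syzygy argument; your explicit unpacking of why $\alpha=0$ forces $T=0$ via the $\Bl$-injection condition is a helpful elaboration of what the paper leaves implicit.
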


\begin{proof}
For any torsion-free reflexive sheaf $E$ on a threefold $X$, we have the vanishing
\[
\Hom (\Coh^{\leq 1}(X), E[1])=0
\]
by \cite[Lemma 4.20]{CL}, and so $E$ satisfies  the vanishing condition (V).
\end{proof}

\paragraph In preparation for proving Theorem \ref{thm:paper14thm5-1analogue}(B), we need to understand $\oZl$-semistable objects a little more.  All the following lemmas have appeared in \cite[Section 5]{Lo14} before, where they were proved for the case of the product elliptic threefold.

\begin{lemsub}\label{lem:paper14lem5-4analogue}
$\Coh^{\leq 0}(X), \Coh (p)_0, \Coh^{\leq 1}(X)$ are all Serre subcategories of $\Bl$.
\end{lemsub}

\begin{proof}
By parts (1)(a) and (2)(a) of Lemma \ref{lem:4-1n4-3paper14analogue}, we can interpret $\Bl$ as the category of all $E \in D^b(X)$ such that $E \in \Bc_\omega$ for all $v \gg 0$ while $u,v>0$ are subject to the constraint \eqref{eq:limitcurve}.  Since $\Coh^{\leq 0}(X), \Coh (p)_0, \Coh^{\leq 1}(X)$ are all Serre subcategories of $\Bc_\omega$ for any ample class $\omega$, the lemma follows.
\end{proof}

\begin{lemsub}\label{lem:paper14lem5-5analogue}
The category $\langle \Coh^{\leq 1}(X), \Fc^{l,0}[1]\rangle$ is a torsion class in $\Bl$.
\end{lemsub}

\begin{proof}
The proof of \cite[Lemma 5.5]{Lo14} carries over, with Lemma \ref{lem:paper14lem5-4analogue} playing the role of \cite[Lemma 5.4]{Lo14}.
\end{proof}

We now define
\begin{align*}
  \Ac_\bullet &= \Coh^{\leq 0}(X) \\
  \Ac_{\bullet,1/2} &= \langle \Coh^{\leq 1}(X), W_{0,\whPhi} \cap \Coh (p)_{\leq 1}, \Fc^{l,0}[1]\rangle \\
  &= \langle
\xymatrix @-2.3pc{
\scalea{\gyoung(;;;,;;;+)} & \scalea{\gyoung(;;;+,;;;+)} & \scalea{\gyoung(;;;*,;;+;*)} & \scalea{\gyoung(;;+;*,;;+;*)} \\
& \scalea{\gyoung(;;;+,;;;0)} & &  \\
& \scalea{\gyoung(;;;+,;;;-)} & &
}, \Fc^{l,0}[1]\rangle.
\end{align*}

\begin{lemsub}\label{lem:paper14lem5-7analogue}
$\Ac_{\bullet,1/2}$ is closed under quotient in $\Bl$, and every object in this category satisfies $\phi \to \tfrac{1}{2}$.
\end{lemsub}

\begin{proof}
The lemma follows from the same argument as in the proof of \cite[Lemma 5.7]{Lo14}, with $\ch_{10}$ replaced with $f\ch_1$,  Lemma \ref{lem:paper14lem5-5analogue} playing the role of \cite[Lemma 5.5]{Lo14}, and \ref{para:TlFlbasicproperties}(iv) playing the role of \cite[Remark 4.4(iv)]{Lo14}; the phase computations needed in the argument come from \ref{para:phasecomputations}.
\end{proof}

\begin{lemsub}\label{lem:paper14lem5-8analogue}
Suppose $F \in \Bl$ is a $\oZl$-semistable object and $f\ch_1(F) > 0$.  Then the vanishing $\Hom (\Coh^{\leq 2}(X),\whPhi F)=0$ holds.  In particular, if $F=\Phi E[1]$ for some coherent sheaf $E$, then $E$ is a torsion-free sheaf.
\end{lemsub}

\begin{proof}
The argument in the proof of \cite[Lemma 5.8]{Lo14} still works, with $\ch_{10}$ replaced with $f\ch_1$, and using \ref{para:TFlastdefinitions}(iv) and Lemma \ref{lem:paper14lem5-7analogue} instead of \cite[Lemma 4.10]{Lo14} and \cite[Lemma 5.7]{Lo14}, respectively.
\end{proof}

\begin{lemsub}\label{lem:paper14lem5-9analogue}
Suppose $F \in \Bl$ is $\oZl$-semistable and $f\ch_1 (F) > 0$.  Let $F_1$ denote the $\whPhi$-WIT$_1$ part of $H^0(F)$.  Then $\wh{F_1} = \whPhi F_1 [1]$ lies in $\Coh^{\leq 1}(X)$.
\end{lemsub}

\begin{proof}
The same argument as in the proof of \cite[Lemma 5.9]{Lo14} shows $F_1 \in W_{1,\whPhi} \cap \Coh (p)_{\leq 1}$, and that we have a $\Bl$-surjection $F \twoheadrightarrow F_1$.  Let us write $\ch (F_1) = {\left( \begin{smallmatrix} 0 & p^\ast \ol{S} & \ol{a}f \\ 0 & \Theta p^\ast \ol{\eta} & \ol{s} \end{smallmatrix}\right)}$.  By Lemma \ref{lem:LZ2Remark5-17analogue}, we have $\Theta p^\ast (H_B\ol{\eta}) = H_B\ol{\eta}\leq 0$.

If $H_B\ol{\eta} < 0$, then  $\phi (F_1) \to -\tfrac{1}{2}$, meaning $F_1$ destabilises $F$.  Hence $H_B\ol{\eta}=0$ must hold.  On the other hand, since $F_1$ is $\whPhi$-WIT$_1$,  it follows that $p^\ast \ol{\eta}$ is an effective divisor class on $X$.  Now, for any ample divisor of the form $\Theta + vp^\ast H_B$ we have
\begin{align*}
  (\Theta + vp^\ast H_B)^2 p^\ast \ol{\eta} &= \Theta p^\ast (K_B\ol{\eta}) + 2v \Theta p^\ast (H_B\ol{\eta}) \\
  &= (h+2v)\Theta p^\ast (H_B\ol{\eta}) \\
  &=0,
\end{align*}
which means $p^\ast \ol{\eta}=0$ in $A^1(X)$.  Thus $\ch (\whPhi F_1) = {\left( \begin{smallmatrix} 0 & 0 & \ast \\ 0 & \ast & \ast \end{smallmatrix}\right)}$, i.e.\ $\whPhi F_1 [1]$ lies in $\Coh^{\leq 1}(X)$ as claimed.
\end{proof}

\begin{proof}[Proof of Theorem \ref{thm:paper14thm5-1analogue}(B)]
The proof of \cite[Theorem 5.1B]{Lo14} still applies, so we merely give an outline here:  In the decomposition $0 \to F' \to F \to F'' \to 0$, the transform $\whPhi F'$ is a coherent sheaf sitting at degree 0 by construction.  Since $F$ is $\oZl$-semistable, we have the vanishing $\Hom (\Coh^{\leq 2}(X), \whPhi F)=0$ from Lemma \ref{lem:paper14lem5-8analogue}, which implies $\Hom (\Coh^{\leq 2}(X), \whPhi F')=0$, proving $\whPhi F'$ is a torsion-free sheaf.

To see that $\whPhi F'$ is $\mu_{\bo,B}$-semistable, take any short exact sequence of sheaves $0 \to M \to \whPhi F' \to N \to 0$ where both $M, N$ are torsion-free.  One can then show that $\Phi M [1]$ is a $\Bl$-subobject of $F$, and that the $\oZl$-semistability of $F$ implies $\mu_{\bo,B}(M) \leq \mu_{\bo,B} (\whPhi F)=\mu_{\bo,B}(\whPhi F')$, where the last equality uses the fact that $\whPhi F''$ lies in $\Coh^{\leq 1}(X)$, which follows from Lemma \ref{lem:paper14lem5-9analogue}.
\end{proof}

\section{Harder-Narasimhan property of limit tilt stability}\label{sec:HNpropertylimittiltstab}

We establish the Harder-Narasimhan property of $\oZl$-stability on $\Bl$ in this section.  The idea is to decompose the heart $\Bl$ into a torsion quadruple \eqref{eq:paper14eq35} first, and then show that each of the four components satisfies a finiteness property (Proposition \ref{prop:paper14pro1}).

\begin{lem}\label{lem:paper14lem6-1analogue}
\begin{itemize}
\item[(a)] $\Ac_\bullet$ and $\Coh (p)_0$ are both Serre subcategories of $\Bl$ and torsion classes in $\Bl$.
\item[(b)] $\Ac_{\bullet, 1/2}$ is a torsion class in $\Bl$.
\end{itemize}
\end{lem}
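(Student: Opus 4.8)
The plan is to obtain all three assertions from the standard principle that, in an abelian category, a full subcategory closed under quotients and extensions in which every object admits a maximal subobject of the prescribed type is automatically a torsion class; this is the mechanism behind Polishchuk's lemma \cite{Pol}, where the maximal-subobject condition is guaranteed by noetherianity. First I would record the closure properties and reduce to this existence statement. For part (a), the assertion that $\Ac_\bullet=\Coh^{\leq 0}(X)$ and $\Coh (p)_0$ are Serre subcategories of $\Bl$ is exactly Lemma \ref{lem:paper14lem5-4analogue}, so that half of (a) needs no new argument; in particular both are closed under quotients and extensions in $\Bl$. For part (b), $\Ac_{\bullet,1/2}$ is closed under extensions by construction (it is an extension closure) and under quotients in $\Bl$ by Lemma \ref{lem:paper14lem5-7analogue}. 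Thus in every case only the existence, for each $E\in\Bl$, of a maximal subobject lying in the subcategory remains.

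I emphasise that this existence must be proved \emph{without} invoking noetherianity of $\Bl$, since the finiteness of $\Bl$ is precisely what the rest of the section (the torsion quadruple and Proposition \ref{prop:paper14pro1}) is devoted to, and using it here would be circular. Instead I would argue at the level of cohomology sheaves. For any $\Bl$-subobject $T\hookrightarrow E$ the long exact sequence in standard cohomology attached to $0\to T\to E\to E/T\to 0$ begins $0\to H^{-1}(T)\to H^{-1}(E)\to H^{-1}(E/T)\to H^0(T)\to H^0(E)\to H^0(E/T)\to 0$. For part (a) the object $T$ is a sheaf, so $H^{-1}(T)=0$ and the sequence exhibits $K:=\ker(T\to H^0(E))$ as the cokernel of $H^{-1}(E)\hookrightarrow H^{-1}(E/T)$; since $H^{-1}(E/T)\in\Fc^l$ is torsion-free and $K$ has support of codimension at least two, we get $H^{-1}(E)\subseteq H^{-1}(E/T)\subseteq H^{-1}(E)^{\ast\ast}$, so $K$ embeds into the fixed sheaf $H^{-1}(E)^{\ast\ast}/H^{-1}(E)$, while $\mathrm{im}(T\to H^0(E))$ embeds into the fixed sheaf $H^0(E)$. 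Hence the length of $T$ (when $T\in\Ac_\bullet$), or its $\ch_2$ and $\ch_3$ (when $T\in\Coh (p)_0$), are bounded purely in terms of $E$; as a $\Bl$-monomorphism between the sheaves in question is a genuine sheaf inclusion, any ascending chain of such subobjects has strictly increasing bounded invariants and therefore terminates, producing the maximal subobject and establishing (a).

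The main obstacle is part (b), where the members of $\Ac_{\bullet,1/2}$ are genuine two-term complexes because of the summand $\Fc^{l,0}[1]$, so no single length bound applies. Here I would use that, along any ascending chain $T_1\subseteq T_2\subseteq\cdots$ of $\Ac_{\bullet,1/2}$-subobjects of $E$, the left cohomologies $H^{-1}(T_i)$ form an ascending chain of subsheaves of the fixed coherent sheaf $H^{-1}(E)$ and hence stabilise by noetherianity of $\Coh (X)$; beyond that point the successive quotients are concentrated in degree zero and lie in $\langle\Coh^{\leq 1}(X),\,W_{0,\whPhi}\cap\Coh (p)_{\leq 1}\rangle$, reducing matters to the sheaf-theoretic finiteness of part (a), controlled by comparison with $H^0(E)$ and $H^{-1}(E)^{\ast\ast}/H^{-1}(E)$. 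The delicate step is bounding the two-dimensional contributions (the $p^\ast A^1(B)$-component of $\ch_1$) of these degree-zero subquotients; this is where I expect to invoke that they are $\whPhi$-WIT$_0$ fiber-type sheaves together with the phase computations of \ref{para:phasecomputations} and the torsion triple \eqref{eq:Bltorsiontriple1}, exactly as in the product case \cite{Lo14}, from which the argument carries over. Once the maximal subobject exists, closure under quotients and extensions forces its quotient to contain no nonzero object of the subcategory, which completes the torsion-pair verification in both parts.
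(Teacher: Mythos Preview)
Your strategy coincides with the paper's: reduce to closure properties plus termination of ascending chains of subobjects, with the paper simply citing \cite[Lemma~6.1(b)]{Lo14} for the mechanics while you reconstruct them. Your argument for (a) is correct; more precisely than ``strictly increasing bounded invariants'', what you actually have is that the kernels $K_i$ and images $I_i$ themselves form compatible ascending chains inside the fixed coherent sheaves $H^{-1}(E)^{\ast\ast}/H^{-1}(E)$ and $H^0(E)$, which forces termination by noetherianity of $\Coh(X)$.

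For (b) your outline is right, but one passage needs reordering. After $H^{-1}(T_i)$ stabilise and you are left with a chain of sheaves $U_i$ in $\langle\Coh^{\leq 1}(X),\,W_{0,\whPhi}\cap\Coh(p)_{\leq 1}\rangle$, the phrase ``reducing matters to the sheaf-theoretic finiteness of part (a), controlled by comparison with $H^0(E)$ and $H^{-1}(E)^{\ast\ast}/H^{-1}(E)$'' is premature: these $U_i$ can be genuinely $2$-dimensional (via the summand $W_{0,\whPhi}\cap\Coh^2(p)_1$), so the relevant kernel has support of codimension only $1$ and the reflexive-hull bound from (a) does not yet apply. The missing step is precisely the one you allude to with ``$\whPhi$-WIT$_0$'': since $\whPhi\Bl\subset D^{[0,1]}_{\Coh(X)}$ (item~(ii) in \ref{para:TFlastdefinitions}), the functor $\whPhi^0$ is left exact on $\Bl$, hence $\whPhi^0 U_i$ injects into a fixed coherent sheaf and stabilises. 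This is what controls the $2$-dimensional layer, and only afterwards are the residual $\whPhi$-WIT$_1$ quotients fiber sheaves, to which the $\Coh(p)_0$ case of (a) applies. The $\whPhi^0$-embedding step is exactly the mechanism made explicit in the parallel argument for Lemma~\ref{lem:Bc1torsionclass} later in the paper. In short, both of your hints are correct but must be applied in series---$\whPhi^0$ first to strip off the $2$-dimensional part, then the reflexive hull for what remains---rather than as the single reduction your wording suggests.
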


\begin{proof}
(a) That $\Ac_\bullet, \Coh (p)_0$ are Serre subcategories of $\Bl$ was already proved in \ref{lem:paper14lem5-4analogue}.  The argument in \cite[Lemma 6.1(b)]{Lo14} shows that, given an ascending chain in $\Bl$
\[
  U_1 \subseteq U_2 \subseteq \cdots \subseteq U_m \subseteq \cdots \subseteq E'
\]
for some fixed $E'$, where the $U_i$  all lie in $\Ac_\bullet$ (resp.\  $\Coh (p)_0$), the $U_i$ must stabilise; this  means that any object in $\Bl$ has a maximal subobject in $\Ac_\bullet$ (resp.\ $\Coh (p)_0$).  Hence  $\Ac_\bullet, \Coh (p)_0$ are both torsion classes in $\Bl$.

(b) The argument in the proof of \cite[Lemma 6.1(b)]{Lo14} carries over, with Lemma \ref{lem:paper14lem5-7analogue} playing the role of \cite[Lemma 5.7]{Lo14}.
\end{proof}

We now define
\begin{align*}
  \mathcal A_{\bullet,1/2,0} &= \langle \mathcal A_{\bullet,1/2}, \scalea{\gyoung(;;+;*,;;0;*)},  \scalea{\gyoung(;;*;*,;+;*;*)},\scalea{\gyoung(;+;*;*,;+;*;*)},\Fc^{l,-}[1]\rangle \notag\\
  &= \langle \Fc^l[1],
  \xymatrix @-2.3pc{
\scalea{\gyoung(;;;,;;;+)} & \scalea{\gyoung(;;;+,;;;+)} & \scalea{\gyoung(;;;*,;;+;*)} & \scalea{\gyoung(;;+;*,;;+;*)} & \scalea{\gyoung(;;*;*,;+;*;*)} &  \scalea{\gyoung(;+;*;*,;+;*;*)}  \\
& \scalea{\gyoung(;;;+,;;;0)} & & \scalea{\gyoung(;;+;*,;;0;*)} & & \\
& \scalea{\gyoung(;;;+,;;;-)} & & & &
}  \rangle. \label{eq49}
\end{align*}
From the computations in \ref{para:phasecomputations}, all the objects in $\Ac_{\bullet,1/2,0}$ have either $\phi \to \tfrac{1}{2}$ or $\phi \to 0$.  (The computation in \ref{para:phasecomputations}(iv) applies to $ \scalea{\gyoung(;+;*;*,;+;*;*)}$ as well.)

\paragraph We  want to show that $\Ac_{\bullet,1/2,0}$ is a torsion class in $\Bl$.  In order to do this, we need to make sure the results on decomposing categories and torsion classes in \cite[Appendix A]{Lo14} all hold for our Weierstra{\ss} threefolds.

\begin{lemsub}\label{lem:paper14lemA1analogue}
\begin{equation}\label{eq:Cohpileq1decomposition}
\Coh (p)_{\leq 1} = \left \langle \Coh^{\leq 1}(X), \vcenter{\vbox{
\xymatrix @-2.3pc{
 \scalea{\gyoung(;;+;*,;;+;*)} \\
 \scalea{\gyoung(;;+;*,;;0;*)} \\
 \scalea{\gyoung(;;+;*,;;-;*)}
}
}} \right\rangle.
\end{equation}
\end{lemsub}

\begin{proof}
Take any $F \in \Coh (p)_{\leq 1}$; we want to show that $F$ lies in the extension closure on the right-hand side of \eqref{eq:Cohpileq1decomposition}.  To this end, we can assume that $F$ has no subsheaves in $\langle \Coh^{\leq 1}(X), \scalea{\gyoung(;;+;*,;;+;*)}\rangle$, i.e.\ we can assume $F$ is a pure 2-dimensional $\whPhi$-WIT$_1$ sheaf.  Suppose $\ch (F) =  {\left( \begin{smallmatrix} 0 & p^\ast \ol{S} & \ol{a}f \\ 0 & \Theta p^\ast \ol{\eta} & \ol{s} \end{smallmatrix}\right)}$.  Then $(p^\ast H_B) \ch_2(F)=H_B\ol{\eta} \leq 0$ by Lemma \ref{lem:LZ2Remark5-17analogue}.  If $H_B \ol{\eta}<0$, then $F$ would lie in $\scalea{\gyoung(;;+;*,;;-;*)}$, and so we can assume $H_B \ol{\eta}=0$.  The  argument  in the second half of the proof of Lemma \ref{lem:paper14lem5-9analogue} now shows $p^\ast \ol{\eta}=0$ in $A^1(X)$, i.e.\ $\whPhi F[1] \in \Coh^{\leq 1}(X) \cap W_{0,\Phi}$.  The argument in the proof of \cite[Lemma A.1]{Lo14} then shows  $F$ must lie in the right-hand side of \eqref{eq:Cohpileq1decomposition}.
\end{proof}

\subparagraph Adding \label{para:paper14appendixanalogues} to the definitions in \ref{para:boxdiagramsdefinition}, let us set
\begin{align*}
 \scalea{\gyoung(;+;*;*,;+;*;*)}  &= \Coh^3(p)_2 \cap W_{0,X}  \\
  \scalea{\gyoung(;+;*;*,;0;*;*)}  &= \{ E \in \Coh^3(p)_2 \cap W_{1,X} : f\ch_{1}(E)=0\} \\
  \scalea{\gyoung(;+;*;*,;-;*;*)}  &= \{ E \in \Coh^3 (p)_2 \cap W_{1,X} : f\ch_{1}(E)<0\}.
\end{align*}
Using Lemma \ref{lem:LZ2Remark5-17analogue}, the following claims all have the same proofs as their counterparts in \cite[Appendix A]{Lo14}:
\begin{itemize}
\item[(i)] \cite[Lemma A.2]{Lo14}: $\Coh^{\leq 2}(X) = \left\langle \Coh (p)_{\leq 1}(X), \scalea{
  \gyoung(;;*;*,;+;*;*)} \right\rangle$.
\item[(ii)]  \cite[Lemma A.3]{Lo14}: $\Coh (X) = \left\langle \Coh^{\leq 2}(X),  \vcenter{\vbox{
  \xymatrix @-2.3pc{
   \scalea{\gyoung(;+;*;*,;+;*;*)} \\
   \scalea{\gyoung(;+;*;*,;0;*;*)} \\
   \scalea{\gyoung(;+;*;*,;-;*;*)}
   }
   }} \right\rangle$.
\item[(iii)] \cite[Lemma A.4]{Lo14}: Every category in the nested sequence

\begin{align*}
&\scalebox{0.5}{
\xymatrix{
\ysize\gyoung(;;;,;;;+)
}
} \label{eq46}
\\
\subset
&\scalebox{0.5}{
\xymatrix @-2.5pc{
\ysize\gyoung(;;;,;;;+) & \ysize\gyoung(;;;+,;;;+)
}
}
\subset
\scalebox{0.5}{
\xymatrix @-2.5pc{
\ysize\gyoung(;;;,;;;+) & \ysize\gyoung(;;;+,;;;+) \\
& \ysize\gyoung(;;;+,;;;0)
}
}
\subset
\scalebox{0.5}{
\xymatrix @-2.5pc{
\ysize\gyoung(;;;,;;;+) & \ysize\gyoung(;;;+,;;;+) \\
& \ysize\gyoung(;;;+,;;;0) \\
& \ysize\gyoung(;;;+,;;;-)
}
}
\subset
\scalebox{0.5}{
\xymatrix @-2.5pc{
\ysize\gyoung(;;;,;;;+) & \ysize\gyoung(;;;+,;;;+) & \ysize\gyoung(;;;*,;;+;*) \\
& \ysize\gyoung(;;;+,;;;0) & \\
& \ysize\gyoung(;;;+,;;;-) &
}
} \notag
\\
\subset
&\scalebox{0.5}{
\xymatrix @-2.5pc{
\ysize\gyoung(;;;,;;;+) & \ysize\gyoung(;;;+,;;;+) & \ysize\gyoung(;;;*,;;+;*) & \ysize\gyoung(;;+;*,;;+;*) \\
& \ysize\gyoung(;;;+,;;;0) & &  \\
& \ysize\gyoung(;;;+,;;;-) & &
}
}
\subset
\scalebox{0.5}{
\xymatrix @-2.5pc{
\ysize\gyoung(;;;,;;;+) & \ysize\gyoung(;;;+,;;;+) & \ysize\gyoung(;;;*,;;+;*) & \ysize\gyoung(;;+;*,;;+;*) \\
& \ysize\gyoung(;;;+,;;;0) & & \ysize\gyoung(;;+;*,;;0;*) \\
& \ysize\gyoung(;;;+,;;;-) & &
}
}
\subset
\scalebox{0.5}{
\xymatrix @-2.5pc{
\ysize\gyoung(;;;,;;;+) & \ysize\gyoung(;;;+,;;;+) & \ysize\gyoung(;;;*,;;+;*) & \ysize\gyoung(;;+;*,;;+;*) \\
& \ysize\gyoung(;;;+,;;;0) & & \ysize\gyoung(;;+;*,;;0;*) \\
& \ysize\gyoung(;;;+,;;;-) & & \ysize\gyoung(;;+;*,;;-;*)
}
}
\subset
\scalebox{0.5}{
\xymatrix @-2.5pc{
\ysize\gyoung(;;;,;;;+) & \ysize\gyoung(;;;+,;;;+) & \ysize\gyoung(;;;*,;;+;*) & \ysize\gyoung(;;+;*,;;+;*) & \ysize\gyoung(;;*;*,;+;*;*) \\
& \ysize\gyoung(;;;+,;;;0) & & \ysize\gyoung(;;+;*,;;0;*) &\\
& \ysize\gyoung(;;;+,;;;-) & & \ysize\gyoung(;;+;*,;;-;*) &
}
} \notag
\\
\subset
&\scalebox{0.5}{
\xymatrix @-2.5pc{
\ysize\gyoung(;;;,;;;+) & \ysize\gyoung(;;;+,;;;+) & \ysize\gyoung(;;;*,;;+;*) & \ysize\gyoung(;;+;*,;;+;*) & \ysize\gyoung(;;*;*,;+;*;*) & \ysize\gyoung(;+;*;*,;+;*;*)\\
& \ysize\gyoung(;;;+,;;;0) & & \ysize\gyoung(;;+;*,;;0;*) & & \\
& \ysize\gyoung(;;;+,;;;-) & & \ysize\gyoung(;;+;*,;;-;*) & &
}
}
\subset
\scalebox{0.5}{
\xymatrix @-2.5pc{
\ysize\gyoung(;;;,;;;+) & \ysize\gyoung(;;;+,;;;+) & \ysize\gyoung(;;;*,;;+;*) & \ysize\gyoung(;;+;*,;;+;*) & \ysize\gyoung(;;*;*,;+;*;*) & \ysize\gyoung(;+;*;*,;+;*;*)\\
& \ysize\gyoung(;;;+,;;;0) & & \ysize\gyoung(;;+;*,;;0;*) & & \ysize\gyoung(;+;*;*,;0;*;*)\\
& \ysize\gyoung(;;;+,;;;-) & & \ysize\gyoung(;;+;*,;;-;*) & &
}
}
\subset
\scalebox{0.5}{
\xymatrix @-2.5pc{
\ysize\gyoung(;;;,;;;+) & \ysize\gyoung(;;;+,;;;+) & \ysize\gyoung(;;;*,;;+;*) & \ysize\gyoung(;;+;*,;;+;*) & \ysize\gyoung(;;*;*,;+;*;*) & \ysize\gyoung(;+;*;*,;+;*;*)\\
& \ysize\gyoung(;;;+,;;;0) & & \ysize\gyoung(;;+;*,;;0;*) & & \ysize\gyoung(;+;*;*,;0;*;*)\\
& \ysize\gyoung(;;;+,;;;-) & & \ysize\gyoung(;;+;*,;;-;*) & & \ysize\gyoung(;+;*;*,;-;*;*)
}
} \notag
\end{align*}
\item[(iv)] \cite[Lemma A.5]{Lo14}: Every category in the nested sequence 

\begin{equation*}
\scalebox{0.5}{\xymatrix @-2.5pc{
\ysize{\gyoung(;;;,;;;+)} & \ysize{\gyoung(;;;+,;;;+)} & \ysize{\gyoung(;;;*,;;+;*)} & \ysize{\gyoung(;;+;*,;;+;*)}  \\
& \ysize{\gyoung(;;;+,;;;0)} & &  \\
& \ysize{\gyoung(;;;+,;;;-)} & &
}
}
\subset
\scalebox{0.5}{\xymatrix @-2.5pc{
\ysize{\gyoung(;;;,;;;+)} & \ysize{\gyoung(;;;+,;;;+)} & \ysize{\gyoung(;;;*,;;+;*)} & \ysize{\gyoung(;;+;*,;;+;*)} & \ysize{\gyoung(;;*;*,;+;*;*)}  \\
& \ysize{\gyoung(;;;+,;;;0)} & &  &  \\
& \ysize{\gyoung(;;;+,;;;-)} & & &
}
}
\subset
\scalebox{0.5}{\xymatrix @-2.5pc{
\ysize{\gyoung(;;;,;;;+)} & \ysize{\gyoung(;;;+,;;;+)} & \ysize{\gyoung(;;;*,;;+;*)} & \ysize{\gyoung(;;+;*,;;+;*)} & \ysize{\gyoung(;;*;*,;+;*;*)} \\
& \ysize{\gyoung(;;;+,;;;0)} & & \ysize{\gyoung(;;+;*,;;0;*)}  \\
& \ysize{\gyoung(;;;+,;;;-)} & &
}
}
\subset
\scalebox{0.5}{\xymatrix @-2.5pc{
\ysize{\gyoung(;;;,;;;+)} & \ysize{\gyoung(;;;+,;;;+)} & \ysize{\gyoung(;;;*,;;+;*)} & \ysize{\gyoung(;;+;*,;;+;*)} & \ysize{\gyoung(;;*;*,;+;*;*)} &  \ysize{\gyoung(;+;*;*,;+;*;*)}  \\
& \ysize{\gyoung(;;;+,;;;0)} & & \ysize{\gyoung(;;+;*,;;0;*)} & & \\
& \ysize{\gyoung(;;;+,;;;-)} & & & &
}
}
\end{equation*}
is a torsion class in $\Coh (X)$.
\item[(v)] \cite[Remark A.6]{Lo14}: $W_{0,\whPhi} \subset \Ac_{\bullet,1/2,0}$ and  $\Ac_{\bullet,1/2,0}^\circ \subset W_{1,\whPhi} \cap \Tc^l$.
\end{itemize}
This series of claims gives, with the same proof as \cite[Lemma 6.2]{Lo14}, the following lemma:

\begin{lemsub}\label{lem:paper14lem6-2analogue}
$\Ac_{\bullet,1/2,0}$ is a torsion class in $\Bl$.
\end{lemsub}

\paragraph By Lemmas \ref{lem:paper14lem6-1analogue} and \ref{lem:paper14lem6-2analogue}, we now have a nested sequence of torsion classes in $\Bl$
\[
  \Ac_\bullet \subset \Ac_{\bullet, 1/2} \subset \Ac_{\bullet,1/2,0}
\]
which gives the torsion quadruple in $\Bl$
\begin{equation}\label{eq:paper14eq35}
  \left( \Ac_\bullet,\,\,\, \mathcal A_{\bullet,1/2} \cap \Ac_\bullet^\circ, \,\,\, \mathcal A_{\bullet,1/2,0} \cap  \mathcal A_{\bullet,1/2}^\circ, \,\,\, \mathcal A_{\bullet,1/2,0}^\circ \right).
\end{equation}
That is, every object $E \in \Bl$ has a filtration
\[
  E_0 \subseteq E_1 \subseteq E_2 \subseteq E_3 = E
\]
in $\Bl$ where
\begin{itemize}
\item $E_0 \in \Ac_\bullet$,
\item $E_1/E_0 \in  \mathcal A_{\bullet,1/2} \cap \Ac_\bullet^\circ$,
\item $E_2/E_1 \in \mathcal A_{\bullet,1/2,0} \cap  \mathcal A_{\bullet,1/2}^\circ$
\item $E_3/E_2 \in  \mathcal A_{\bullet,1/2,0}^\circ$.
\end{itemize}
In the remainder of this section, we will show that this filtration in $\Bl$ refines to the Harder-Narasimhan filtration with respect to $\oZl$-stability.

\begin{prop}\label{prop:paper14pro1}
The following finiteness properties hold:
\begin{itemize}
\item[(1)] For $\Ac = \mathcal A_{\bullet,1/2}\cap \Ac_\bullet^\circ$:
\begin{itemize}
\item[(a)] There is no infinite sequence of strict monomorphisms in $\Ac$
\begin{equation}\label{eq25}
  \cdots \hookrightarrow E_n \hookrightarrow \cdots \hookrightarrow E_1 \hookrightarrow E_0
\end{equation}
where $\phi (E_{i+1}) \succ \phi (E_i)$ for all $i$.
\item[(b)] There is no infinite sequence of strict epimorphisms in $\Ac$
\begin{equation}\label{eq26}
  E_0 \twoheadrightarrow E_1 \twoheadrightarrow \cdots \twoheadrightarrow E_n \twoheadrightarrow \cdots.
\end{equation}
\end{itemize}
\item[(2)] For $\Ac = \Ac_{\bullet,1/2,0} \cap \Ac_{\bullet,1/2}^\circ$:
    \begin{itemize}
    \item[(a)] There is no infinite sequence of strict monomorphisms \eqref{eq25} in $\Ac$.
    \item[(b)] There is no infinite sequence of strict epimorphisms \eqref{eq26} in $\Ac$.
    \end{itemize}
\item[(3)] For $\Ac = \Ac_{\bullet,1/2,0}^\circ$:
  \begin{itemize}
  \item[(a)] There is no infinite sequence of strict monomorphisms \eqref{eq25} in $\Ac$.
  \item[(b)] There is no infinite sequence of strict epimorphisms \eqref{eq26} in $\Ac$.
  \end{itemize}
\end{itemize}
\end{prop}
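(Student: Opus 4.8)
The plan is to follow the structure of the counterpart of this proposition in \cite[Section 6]{Lo14}, using the analogues already in place: the sign results of Lemma \ref{lem:LZ2Remark5-17analogue} for the intersection numbers $\ch_{3-d+1}(F)\cdot p^\ast H_B^{d-1}$, the phase computations of \ref{para:phasecomputations}, the positivity $f\ch_1\geq 0$ on $\Bl$ from \ref{para:TlFlbasicproperties}(iv), and the torsion-class statements Lemmas \ref{lem:paper14lem6-1analogue} and \ref{lem:paper14lem6-2analogue}. The guiding observation is that, by the cohomology decomposition \eqref{eq:assumptioncohomring}, every coefficient in the Laurent expansion \eqref{eq:oZwFOvform} of $\oZw(F)$ is built from the six coordinate functionals $\wt n=\ch_0$, $\wt x=f\ch_1$, $H_B\wt S$, $H_B\wt\eta$, $\wt a$, $\wt s=\ch_3$ obtained by writing $\ch(F)$ in the form \eqref{eq:chF}; each of these is additive on $\Bl$-short exact sequences and takes values in a discrete subgroup of $\RR$. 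Note that the two finiteness statements are of genuinely different natures: part (b) carries no hypothesis on phases and is therefore the assertion that each $\Ac$ is a Noetherian abelian category, whereas part (a) concerns a \emph{descending} chain of subobjects of $E_0$ subject to the increasing-phase condition.

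For part (b) I would argue that each $\Ac$ is Noetherian. An infinite sequence of strict epimorphisms \eqref{eq26} is equivalent to an infinite strictly ascending chain of $\Bl$-subobjects of $E_0$; applying $\whPhi$ and passing to the cohomology sheaves of the WIT decomposition of each term turns this into ascending/descending chains of coherent sheaves, which must stabilise since $\Coh(X)$ is Noetherian, exactly as in \cite{Lo14}. For part (a), I would run a lexicographic descent on the six functionals, the order of the functionals being dictated by which coefficient in \eqref{eq:oZwFOvform} is dominant on the category in question. On $\Ac=\Ac_{\bullet,1/2}\cap\Ac_\bullet^\circ$ every object has $\wt x=0$ and $\phi\to\tfrac12$, so the controlling quantities are the imaginary $O(v)$-coefficient built from $H_B\wt\eta$ and $H_B^2\wt n$, and then $H_B\wt S$; on $\Ac=\Ac_{\bullet,1/2,0}\cap\Ac_{\bullet,1/2}^\circ$ every object has $\phi\to 0$ and the leading invariant is $H_B^2\wt x\geq 0$; on $\Ac=\Ac_{\bullet,1/2,0}^\circ\subseteq W_{1,\whPhi}\cap\Tc^l$ every object has $\phi\to -\tfrac12$, dual to the first case. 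For a chain of strict monomorphisms \eqref{eq25} with $\phi(E_{i+1})\succ\phi(E_i)$, additivity together with the sign results of Lemma \ref{lem:LZ2Remark5-17analogue} and \ref{para:TlFlbasicproperties}(iv) forces the leading functional to be monotone along the chain and bounded between $0$ and its value on $E_0$; being valued in a discrete group it is eventually constant, whereupon the successive quotients are annihilated by that functional and the phase comparison descends to the next one. Iterating through the finite list terminates the chain.

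The main obstacle is precisely the passage from the \emph{asymptotic} phase relation $\prec$, which is an inequality of Laurent polynomials in $v$ as $v\to\infty$, to honest monotonicity of a single discrete invariant. When two consecutive objects agree to leading order in $\oZw$, the relation $\phi(E_{i+1})\succ\phi(E_i)$ only constrains the first coefficient at which they differ, so one cannot work with one functional but must organise the six functionals lexicographically and check, at each level, that the residual comparison is still governed by a functional that is simultaneously additive, sign-definite on the relevant category (so as to supply boundedness), and discrete. Setting up this bookkeeping correctly -- tracking which functionals have already been forced constant and confirming that the next-order comparison is controlled by an invariant of the type furnished by Lemma \ref{lem:LZ2Remark5-17analogue} -- is the delicate part; once the lexicographic descent is in place, the verification in each of the three categories is routine and parallels the corresponding argument in \cite{Lo14}.
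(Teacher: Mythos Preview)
Your outline tracks the paper's general shape --- reduce to chains of cohomology sheaves, use discrete additive invariants to force stabilisation --- and you are right that the argument parallels \cite[Proposition 6.3]{Lo14}. But two concrete mechanisms that do the real work in the paper are missing from your plan, and without them the sketch does not close.

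\textbf{Part (b): the ascending chain of $H^{-1}(E_i)$ is not bounded by Noetherianity alone.} You write that applying $\whPhi$ and taking cohomology yields chains in $\Coh(X)$ that stabilise because $\Coh(X)$ is Noetherian. The surjections $H^0(E_i)\twoheadrightarrow H^0(E_{i+1})$ do stabilise this way, but the maps on $H^{-1}$ go the wrong direction: after killing $H^{-1}(K_i)$ one has injections $H^{-1}(E_i)\hookrightarrow H^{-1}(E_{i+1})$, an \emph{ascending} chain not sitting inside any fixed sheaf. The paper's fix in (1)(b) and (2)(b) is to show the successive cokernels are supported in codimension $\geq 2$ (in (1)(b) one first proves $K_i\in\Coh^{\leq 1}(X)$ via the positivity Lemma~\ref{lem:AG42-157ppp}; in (2)(b) one shows $\wh{H^0(K_i)}\in\Coh^{\leq 1}(X)$ and, separately, that $\wh{H^{-1}(E_i)}$ is torsion-free), and then to embed the whole chain into the common reflexive hull $H^{-1}(E_i)^{\ast\ast}$ (resp.\ $(\wh{H^{-1}(E_i)})^{\ast\ast}$), which is independent of $i$. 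Only then does Noetherianity of $\Coh(X)$ apply. This double-dual step is the crux and is absent from your plan.

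\textbf{Part (2): the invariants live on the transform, not on the object.} Your lexicographic descent proposes to run through the six coordinates of $\ch(E_i)$. In (2)(a) and (2)(b), once $f\ch_1(G_i)=0$ and $H^{-1}$ has stabilised, the quotient $G_i=H^0(G_i)$ is a pure $2$-dimensional $\whPhi$-WIT$_1$ sheaf, and none of the remaining coordinates of $\ch(G_i)$ is sign-definite in a way that forces termination. The paper instead applies $\whPhi$: by Lemma~\ref{lem:paper14lem6-4analogue} the transforms $\whPhi^1(H^0(E_i))$ land in $\Coh^{\leq 1}(X)$, and then the discrete nonnegative invariant $(p^\ast H_B)\ch_2$ on $\Coh^{\leq 1}(X)$ (Lemma~\ref{lem:AG42-156claim1}) does the job. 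So the controlling functional is not a coordinate of $\ch(E_i)$ but of $\ch(\whPhi E_i)$, and recognising this --- together with the auxiliary structural fact that the transform is $1$-dimensional (resp.\ torsion-free) --- is what makes (2) go through. A purely numerical lexicographic scheme on $\ch(E_i)$ will not see it.

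Finally, a minor point on (1)(a): rather than a full lexicographic comparison of phases, the paper's argument is cleaner --- once $\omega^2\ch_1(H^0(G_i))=0$ the quotient $G_i$ lies in $\Coh^{\leq 1}(X)$, hence $\phi(G_i)=\tfrac12$, which is the maximal possible value and directly contradicts $\phi(E_{i+1})\succ\phi(E_i)$. No deeper descent is needed there.
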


\begin{proof}
Let us start by fixing some notation.  For the proofs of part (a)'s, we will let $G_i$ denote the cokernel of $E_{i+1} \hookrightarrow E_i$, so that we have a $\Bl$-short exact sequence
\begin{equation}\label{eq:Eip1EiGi}
0 \to E_{i+1} \to E_i \to G_i \to 0
\end{equation}
for each $i$.  For the proofs of part (b)'s, we will let $K_i$ denote the kernel of $E_i \twoheadrightarrow E_{i+1}$, so that we have a $\Bl$-short exact sequence
\begin{equation}\label{eq:KiEiEip1}
0 \to K_i \to E_i \to E_{i+1} \to 0
\end{equation}
for each $i$.  Since $f\ch_1 \geq 0$ on $\Bl$, in the proof of either part (a) or  part (b) for (1), (2) and (3) we can always assume $f\ch_1 (E_i)$ is constant, and that $f\ch_1 (G_i)=0$ or $f\ch_1(K_i)=0$ for all $i$.  Note that this further implies $f\ch_1(H^j(G_i))=0=f\ch_1(H^j(K_i))$ for all $i,j$.

(1)(a): The first part of the argument for \cite[Proposition 6.3(1)(a)]{Lo14} applies here, and says that we can assume the $H^{-1}(E_i)$ are constant and the $H^{-1}(G_i)$ are zero.  Hence we have a short exact sequence of sheaves
\begin{equation}\label{eq:paper14Prop6.31a-eq1}
0 \to H^0(E_{i+1}) \to H^0(E_i) \to H^0(G_i) \to 0
\end{equation}
for each $i$.

From the definition of $\Ac_{\bullet,1/2}$ we can see that for any $A \in \Ac_{\bullet,1/2}$, the sheaf $H^0(A)$ lies in the category $\Coh (p)_{\leq 1}$, and so $\ch_1(H^0(A))$ is effective.  Given the short exact sequence \eqref{eq:paper14Prop6.31a-eq1}, it follows that for any fixed ample divisor $\omega$ on $X$ the sequence $\omega^2 \ch_1 (H^0(E_i))$ is a decreasing sequence of nonnegative real numbers that eventually becomes constant, meaning  $\omega^2 \ch_1 (H^0(G_i))=0$ for $i \gg 0$, in which case $H^0(G_i) \in \Coh^{\leq 1}(X)$.  This implies that $\phi (G_i)=\tfrac{1}{2}$, which cannot happen under the assumption $\phi (E_{i+1}) \succ \phi (E_i)$.  Hence an infinite chain of monomorphisms as described in (1)(a) cannot exist.

(1)(b): The idea is to replace the positivity result `$HD\ch_{01} \geq 0$ on $\Ac_{\bullet,1/2}$' in the proof of \cite[Proposition 6.3(1)(b)]{Lo14} by the positivity in Lemma \ref{lem:AG42-157ppp}.

From the long exact sequence of cohomology of \eqref{eq:KiEiEip1}, we see that the $H^0(E_i)$ eventually stabilise, so we can assume they are constant, and the long exact sequence reduces to
\[
0 \to H^{-1}(K_i) \to H^{-1}(E_i)\to H^{-1}(E_{i+1}) \to H^0(K_i) \to 0.
\]
As in the proof of \cite[Proposition 6.3(1)(b)]{Lo14}, by considering $\rank (H^{-1}(E_i))$ we see that $H^{-1}(K_i)$ vanishes for $i \gg 0$, and so we can assume $H^{-1}(K_i)=0$  and $K_i = H^0(K_i)$ for all $i$ by omitting a finite number of terms.  Now, by Lemma \ref{lem:AG42-157ppp} below, the sequence of real numbers $\Theta (p^\ast H_B)\ch_1(E_i)$ is decreasing and must eventually become constant, in which case $\Theta (p^\ast H_B)\ch_1(K_i)=0$.  The same argument as in the proof of Lemma \ref{lem:paper14lem5-9analogue} will then show $\ch_1(K_i)=0$, i.e.\ $K_i \in \Coh^{\leq 1}(X)$.  This implies that all the $H^{-1}(E_i)$ are isomorphic in codimension 1, and so we have the sheaf inclusions
\[
  H^{-1}(E_i) \hookrightarrow H^{-1}(E_{i+1}) \hookrightarrow H^{-1}(E_{i+1})^{\ast \ast}
\]
where $H^{-1}(E_{i+1})^{\ast \ast}$ is independent of $i$.  Hence the $H^{-1}(E_i)$ stabilise, and the $E_i$ themselves stabilise.

(2)(a) Replacing $\ch_{10}$ with $f\ch_1$, the  argument for \cite[Proposition 6.3(2)(a)]{Lo14} gives us the following:
\begin{itemize}
\item The $H^{-1}(E_i)$ stabilise, and so we can assume they are constant.
\item For all $i$, we can assume $H^{-1}(G_i)=0$ and that $G_i=H^0(G_i)$ is a $\whPhi$-WIT$_1$ pure 2-dimensional sheaf.
\end{itemize}
We now have the following exact sequence of sheaves from \eqref{eq:Eip1EiGi}
\[
0 \to H^0(E_{i+1}) \to H^0(E_i) \to H^0(G_i) \to 0
\]
for all $i$.  Applying $\whPhi$, we obtain the short exact sequence of sheaves
\[
0 \to \whPhi^1(H^0(E_{i+1})) \to \whPhi^1 (H^0(E_i)) \to \whPhi^1 (H^0(G_i)) \to 0
\]
in which all the terms lie in $\Coh^{\leq 1}(X)$ as a result of  Lemma \ref{lem:paper14lem6-4analogue}.  Then by Lemma \ref{lem:AG42-156claim1}, we have $(p^\ast H_B)\ch_2(\whPhi^1 (H^0(G_i)))=0$ for $i \gg 0$; from the proof of Lemma \ref{lem:AG42-156claim1}, this further implies $\whPhi^1 (H^0(G_i))$ is a fiber sheaf, i.e.\ $G_i$ itself is a fiber sheaf.  Since we observed above that $G_i$ is a pure 2-dimensional sheaf, this means $G_i$ must be zero, i.e.\ the $E_i$ stabilise.

(2)(b) As in the proof of \cite[Proposition 6.3(2)(b)]{Lo14}, we can assume the $H^0(E_i)$ are constant, and  $K_i = H^0(K_i)$ is a pure 2-dimensional $\whPhi$-WIT$_1$ sheaf for all $i$.  Hence we have the exact sequence of cohomology
\[
 0 \to H^{-1}(E_i) \to H^{-1}(E_{i+1}) \to H^0(K_i) \to 0.
\]
Applying $\whPhi$ then gives the exact sequence of sheaves
\begin{equation}\label{eq:HNpropertyProp2beq1}
0 \to \wh{H^{-1}(E_i)} \to \wh{H^{-1}(E_{i+1})} \to \wh{H^0(K_i)} \to 0
\end{equation}
where $\wh{H^0(K_i)} \in \Coh^{\leq 1}(X)$ by Lemma \ref{lem:paper14lem6-4analogue}.  We claim that $\wh{H^{-1}(E_i)}$ is torsion-free for all $i$.  To see this, fix any $i$ and suppose $\wh{H^{-1}(E_i)}$ has a subsheaf $T$ lying in $\Coh^{\leq 2}(X)$.  Let $T_i$ denote the $\Phi$-WIT$_i$ part of $T$.  Then we have an injection of sheaves $T_0 \hookrightarrow \wh{H^{-1}(E_i)}$ the cokernel of which is $\Phi$-WIT$_0$, and so this injection is taken by $\Phi$ to the injection of sheaves $\wh{T_0} \hookrightarrow H^{-1}(E_i)$.  Note that $f\ch_1(\wh{T_0})=-\ch_0(T_0)=0$.  On the other hand, since $\wh{T_0}$ is a subsheaf of $H^{-1}(E_i)$, which lies in $\Fc^{l,-}$ (because $E_i \in \Ac_{\bullet, 1/2}^\circ$), it follows that $\wh{T_0} \in \Fc^{l,-}$ and  $f\ch_1(\wh{T_0})< 0$ if $\wh{T_0}$ is nonzero.  Thus $T_0$ must vanish, i.e.\ $T=T_1$ lies in $\Coh (p)_{\leq 1} \cap W_{1,\Phi}$.  Then $\wh{T} = \Phi T [1] \in \Coh(p)_{\leq 1} \cap W_{0,\whPhi}$, which is contained in $\Ac_{\bullet, 1/2}$. Now
\begin{align*}
\Hom (T,\wh{H^{-1}(E_i)}) &\cong \Hom (\Phi T, \Phi \wh{H^{-1}(E_i)}) \\
&\cong \Hom (\wh{T}[-1],H^{-1}(E_i)) \\
&\cong \Hom (\wh{T},H^{-1}(E_i)[1]) \\
&=0
\end{align*}
where the last equality holds because  $E_i \in \Ac_{\bullet, 1/2}^\circ$.  Thus $T$ itself must be zero, i.e.\ $\wh{H^{-1}(E_i)}$ is torsion-free.

From the exact sequence \eqref{eq:HNpropertyProp2beq1}, we now have the sheaf inclusions
\[
    \wh{H^{-1}(E_i)} \hookrightarrow \wh{H^{-1}(E_{i+1})}  \hookrightarrow  (\wh{H^{-1}(E_{i+1})})^{\ast\ast}
\]
for all $i$, where $(\wh{H^{-1}(E_{i+1})})^{\ast\ast}$ is independent of $i$.  Hence the $H^{-1}(E_i)$ stabilise, and the $E_i$ themselves stabilise.

(3)(a) By \ref{para:paper14appendixanalogues}(v), all the objects $E_i, G_i$ lie in $W_{1,\whPhi} \cap \Tc^l$.  The proof of \cite[Proposition 6.3(3)(a)]{Lo14} applies once we replace `$HD\ch_{01}$' in the proof by `$\omega^2 \ch_1$', for any ample class $\omega$ on $X$.

(3)(b) The proof of \cite[Proposition 6.3(3)(b)]{Lo14} applies.
\end{proof}

\begin{lem}\label{lem:AG42-157ppp}
For any $A \in \Ac_{\bullet,1/2}$ we have $\Theta (p^\ast H_B) \ch_1(A) \geq 0$.
\end{lem}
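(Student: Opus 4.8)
The plan is to exploit that $A \mapsto \Theta(p^\ast H_B)\ch_1(A)$ is a group homomorphism $K(D^b(X)) \to \RR$, hence additive on every distinguished triangle and in particular on $\Bl$-short exact sequences. Since $\Ac_{\bullet,1/2} = \langle \Coh^{\leq 1}(X), W_{0,\whPhi}\cap \Coh(p)_{\leq 1}, \Fc^{l,0}[1]\rangle$, every object is a finite iterated extension of objects in the three generating subcategories, so it suffices to verify $\Theta(p^\ast H_B)\ch_1 \geq 0$ on each generator. The first two generators both sit inside $\Coh(p)_{\leq 1}$, so effectively only two cases remain.

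For a sheaf $A \in \Coh(p)_{\leq 1}$, I would first note that $A$ vanishes on the generic fiber, so its fiber degree $f\ch_1(A)$ vanishes; writing $\ch_1(A) = x\Theta + p^\ast S$ as in \eqref{eq:chEtorsionfreepart} and using $f\Theta = 1$, $f p^\ast S = 0$, this forces $x = 0$ and hence $\ch_1(A) = p^\ast S$. Since $\ch_1(A)$ is the (effective) divisorial part of the support of $A$, the class $p^\ast S$ is effective, and the computation $\Theta(p^\ast H_B)(p^\ast S) = \Theta p^\ast(H_B S) = H_B S$ together with Corollary \ref{cor:petaeffective} gives $\Theta(p^\ast H_B)\ch_1(A) = H_B S \geq 0$. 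This disposes of both $\Coh^{\leq 1}(X)$ (where in fact $\ch_1 = 0$) and $W_{0,\whPhi}\cap \Coh(p)_{\leq 1}$ at once.

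The remaining, and most delicate, case is an object $A = A'[1]$ with $A' \in \Fc^{l,0}$, where the shift flips the sign: $\Theta(p^\ast H_B)\ch_1(A) = -\Theta(p^\ast H_B)\ch_1(A')$, so I must show this quantity is $\leq 0$ on $\Fc^{l,0}$. Here $\mu_f(A') = 0$ with $\ch_0(A') > 0$ again forces $\ch_1(A') = p^\ast S$, and because $f\ch_1(A') = 0$ the auxiliary slope simplifies as $\Theta(p^\ast H_B)\ch_1(A') = (\Theta p^\ast H_B + mf)\ch_1(A') = \ch_0(A')\,\mu_{\Theta p^\ast H_B + mf}(A')$. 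Since $A'$ is a sheaf in $\Fc^l$, it lies in $\Fc_\omega$ for $v \gg 0$ and therefore satisfies condition (1)(c) of Lemma \ref{lem:4-1n4-3paper14analogue}; applying that condition to the subsheaf $A' \subseteq A'$ in the $\mu_f = 0$ alternative yields $\mu_{\Theta p^\ast H_B + mf}(A') \leq 0$, whence $\Theta(p^\ast H_B)\ch_1(A') \leq 0$ and so $\Theta(p^\ast H_B)\ch_1(A) \geq 0$.

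The main obstacle is precisely this $\Fc^{l,0}[1]$ case: one has to track the sign reversal under the shift and recognize that the positivity of $\Theta(p^\ast H_B)\ch_1$ on $A'[1]$ is exactly the \emph{defining} negativity built into $\Fc^l$ through the auxiliary slope $\mu_{\Theta p^\ast H_B + mf}$. Everything else reduces to effectivity of Chern classes together with Corollary \ref{cor:petaeffective}.
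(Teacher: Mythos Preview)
Your proof is correct and takes essentially the same approach as the paper: both reduce to showing the inequality on $\Coh(p)_{\leq 1}$ (via effectivity of $\ch_1$) and on $\Fc^{l,0}$ (via the defining slope condition), using additivity of $\Theta(p^\ast H_B)\ch_1$. The only cosmetic differences are that the paper decomposes $A$ via $H^0(A)$ and $H^{-1}(A)$ rather than via the extension-closure generators, invokes Lemma~\ref{lem:muwleftcomponent} directly rather than Corollary~\ref{cor:petaeffective} for the $\Coh(p)_{\leq 1}$ part, and for the $\Fc^{l,0}$ part uses the characterisation (1)(a) of Lemma~\ref{lem:4-1n4-3paper14analogue} (computing $\omega^2\ch_1(H^{-1}(A))\leq 0$ explicitly) rather than the equivalent characterisation (1)(c).
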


\begin{proof}
Take any $A \in \Ac_{\bullet,1/2}$.  From the definition of $\Ac_{\bullet,1/2}$, it is clear that $H^0(A) \in \Coh (p)_{\leq 1}$ and $\ch_1(H^0(A))$ is effective.  Lemma \ref{lem:muwleftcomponent} then gives $\Theta (p^\ast H_B)\ch_1 (H^0(A)) \geq 0$.

Also from the definition of $\Ac_{\bullet, 1/2}$, we know $f\ch_1(H^{-1}(A))=0$, and so $\ch_1 (H^{-1}(A)) = p^\ast \ol{\eta}$ for some $\ol{\eta} \in A^1(B)$ by assumption \eqref{eq:assumptioncohomring}.  Then for $u,v>0$ satisfying \eqref{eq:limitcurve} and for $v \gg 0$, we have
\begin{align*}
  0 &\geq \omega^2 \ch_1 (H^{-1}(A)) \text{ by Lemma \ref{lem:4-1n4-3paper14analogue}(1)(a)} \\
  &= \omega^2 p^\ast \ol{\eta} \\
  &= u(hu+2v)\Theta p^\ast (H_B \ol{\eta}).
\end{align*}
Hence $\Theta (p^\ast H_B)\ch_1(H^{-1}(A)) \leq 0$.  Overall, we have $\Theta (p^\ast H_B)\ch_1(A) \geq 0$.
\end{proof}

\begin{lem}\label{lem:paper14lem6-4analogue}
For any $A \in \Ac_{\bullet,1/2,0}$, if we let $A_1$ denote the $\whPhi$-WIT$_1$ part of $H^0(A)$, then $\wh{A_1} \in \Coh^{\leq 1}(X)$.
\end{lem}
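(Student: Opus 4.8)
The plan is to reduce the statement to a single property that propagates cleanly through extensions under $\whPhi$, and then to verify that property on the generators of $\Ac_{\bullet,1/2,0}$. First I would pin down where $H^0(A)$ lives. Writing $\mathcal{S}$ for the extension closure in $\Coh(X)$ of the nine degree-$0$ generators appearing in the definition of $\Ac_{\bullet,1/2,0}$, one checks that $\mathcal{S}$ is exactly the largest category in the nested sequence of \ref{para:paper14appendixanalogues}(iv), hence a torsion class in $\Coh(X)$. Since each generator lies in $\Tc^l$ (by \ref{para:TlFlbasicproperties}(i),(iii) together with the fact that sheaves in $\Bl$ lie in $\Tc^l$) we have $\mathcal{S}\subseteq\Tc^l$, and as $(\Tc^l,\Fc^l)$ is a torsion pair this gives $\Fc^l\subseteq\mathcal{S}^\circ$. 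Thus $\Ac_{\bullet,1/2,0}=\langle\Fc^l[1],\mathcal{S}\rangle$ is contained in the tilt $\langle\mathcal{S}^\circ[1],\mathcal{S}\rangle$ of $\Coh(X)$ at $(\mathcal{S},\mathcal{S}^\circ)$, so $H^0(A)\in\mathcal{S}$ for every $A\in\Ac_{\bullet,1/2,0}$. Because $\whPhi^0$ vanishes on the $\whPhi$-WIT$_1$ part while $\whPhi^1$ vanishes on the $\whPhi$-WIT$_0$ part, the long exact sequence of the $\whPhi$-WIT decomposition of $H^0(A)$ gives $\whPhi^1H^0(A)\cong\whPhi^1A_1=\wh{A_1}$. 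Hence it suffices to prove $\whPhi^1E\in\Coh^{\leq1}(X)$ for every $E\in\mathcal{S}$.

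Say a sheaf $Q$ has property $(\mathrm P)$ if $Q\in W_{1,\whPhi}$ and $\wh{Q}\in\Coh^{\leq1}(X)$. The central reduction is the claim: if $E\in\Coh(X)$ has a finite filtration whose successive quotients each lie in $W_{0,\whPhi}$ or satisfy $(\mathrm P)$, then $\whPhi^1E\in\Coh^{\leq1}(X)$. I would prove this by induction on the length of the filtration. For the inductive step, apply $\whPhi$ to $0\to E'\to E\to Q\to0$, where $E'$ is the previous stage and $Q$ the top quotient. As $\whPhi$ has cohomological amplitude $[0,1]$, the resulting long exact sequence ends in $\whPhi^1E'\to\whPhi^1E\to\whPhi^1Q\to0$. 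By induction $\whPhi^1E'\in\Coh^{\leq1}(X)$, so the image of $\whPhi^1E'\to\whPhi^1E$, being a quotient of a $\Coh^{\leq1}(X)$-sheaf, lies in $\Coh^{\leq1}(X)$; and $\whPhi^1Q$ is either $0$ (if $Q\in W_{0,\whPhi}$) or $\wh{Q}\in\Coh^{\leq1}(X)$ (if $Q$ satisfies $(\mathrm P)$). Since $\Coh^{\leq1}(X)$ is closed under extensions, $\whPhi^1E\in\Coh^{\leq1}(X)$, completing the induction.

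It then remains to produce such a filtration for an arbitrary $E\in\mathcal{S}$. Every generator $G$ carries its canonical two-step filtration $0\to G_0\to G\to G_1\to0$ by the $\whPhi$-WIT torsion pair, with $G_0\in W_{0,\whPhi}$; so I only need each $G_1$ to satisfy $(\mathrm P)$. For the three $W_{0,\whPhi}$-generators $\scalea{\gyoung(;;+;*,;;+;*)}$, $\scalea{\gyoung(;;*;*,;+;*;*)}$, $\scalea{\gyoung(;+;*;*,;+;*;*)}$ we have $G_1=0$. The generator $\scalea{\gyoung(;;+;*,;;0;*)}$ is already $\whPhi$-WIT$_1$ with $\wh{G}\in\{\Coh^{\leq0}\}^\uparrow\cap\Coh^{\leq1}(X)$ by its very definition, so it satisfies $(\mathrm P)$. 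For each remaining generator, which is contained in $\Coh^{\leq1}(X)$, the piece $G_1\in\Coh^{\leq1}(X)\cap W_{1,\whPhi}$ is a fiber sheaf by \cite[Remark 3.14, Lemma 3.15]{Lo11}; since the relative Fourier--Mukai transform preserves the fibers of $p$, the sheaf $\wh{G_1}$ is again supported on finitely many fibers, so $\wh{G_1}\in\Coh(p)_0\subseteq\Coh^{\leq1}(X)$ and $(\mathrm P)$ holds. Concatenating these two-step filtrations along an $\mathcal{S}$-filtration of $E$ yields the required filtration, and the claim of the second paragraph gives $\wh{A_1}=\whPhi^1H^0(A)\in\Coh^{\leq1}(X)$.

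The step I expect to be the main obstacle is the bookkeeping of the first paragraph: establishing that $H^0(A)$ lands in $\mathcal{S}$ itself, rather than merely in the larger category generated by $\mathcal{S}$ under quotients. This is exactly what the torsion-class identification of $\mathcal{S}$ via \ref{para:paper14appendixanalogues}(iv) secures, and it is precisely what makes the inductive argument applicable. A secondary point worth emphasising is that I deliberately avoid trying to identify $A_1$ as an object of a WIT$_1$ subcategory with small transform: $A_1$ is only a quotient in the $\whPhi$-WIT torsion pair, so a priori one cannot exclude a positive-rank contribution by a direct structural description. The cohomological long exact sequence sidesteps this by bounding $\wh{A_1}$ as a successive extension of quotients of $\Coh^{\leq1}(X)$-sheaves, which is all the conclusion requires.
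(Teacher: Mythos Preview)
Your proof is correct and follows essentially the same approach as the paper. The paper defers to \cite[Lemma 6.4]{Lo14}, whose argument (as one can infer from the parallel proof of Lemma~\ref{lem:sh-transf-ellipLem5-4analogue} later in this paper) observes that the property ``$\whPhi^1 A \in \Coh^{\leq 1}(X)$'' is preserved under extensions in the heart and then verifies it on the generators of $\Ac_{\bullet,1/2,0}$; your version carries out the same extension-plus-generator check, with the cosmetic difference that you first pass to $H^0(A)\in\mathcal{S}$ and run the induction in $\Coh(X)$ rather than directly in $\Bl$.
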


\begin{proof}
The proof of \cite[Lemma 6.4]{Lo14} applies without change.
\end{proof}

\begin{lem}\label{lem:AG42-156claim1}
For any $A \in \Coh^{\leq 1}(X)$, we have $(p^\ast H_B)\ch_2(A) \geq 0$.
\end{lem}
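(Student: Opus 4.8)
The plan is to note that this is exactly the first assertion of Lemma~\ref{lem:cohleq10dimfiberpositivity}, so in principle nothing new is required; for clarity I would reproduce the short argument and record the extra information about the equality case that is invoked elsewhere. First I would observe that, since $A \in \Coh^{\leq 1}(X)$ is supported in dimension at most $1$, we have $\ch_0(A) = 0$ and $\ch_1(A) = 0$, so that in the matrix notation for Chern characters we may write $\ch(A) = {\left( \begin{smallmatrix} 0 & 0 & \ol{a}f \\ 0 & \Theta p^\ast \ol{\eta} & \ol{s}  \end{smallmatrix}\right)}$ for some $\ol{\eta} \in A^1(B)$ and $\ol{a},\ol{s} \in \QQ$; in particular $\ch_2(A) = \Theta p^\ast \ol{\eta} + \ol{a}f$. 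By Lemma~\ref{lem:1dimshch2etaeffective}, the class $\ol{\eta}$ is effective on $B$.

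The key computation is then
\[
  (p^\ast H_B)\ch_2(A) = (p^\ast H_B)\big(\Theta p^\ast \ol{\eta} + \ol{a}f\big) = H_B\ol{\eta},
\]
where $(p^\ast H_B)\,\Theta\, p^\ast \ol{\eta} = H_B\ol{\eta}$ follows from $\Theta f = 1$ together with the projection formula, and $(p^\ast H_B)f = 0$ since $p_\ast f = 0$. Because $H_B$ is ample on $B$ and $\ol{\eta}$ is effective, we conclude $(p^\ast H_B)\ch_2(A) = H_B\ol{\eta} \geq 0$, as claimed (the case $A = 0$ being trivial).

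Finally, for the sake of the reference made in the proof of Proposition~\ref{prop:paper14pro1}(2)(a), I would record the equality case: if $(p^\ast H_B)\ch_2(A) = 0$ then $H_B\ol{\eta} = 0$, and since $\ol{\eta}$ is effective and $H_B$ is ample this forces $\ol{\eta} = 0$; hence $\ch_2(A) = \ol{a}f$ is a multiple of the fiber class and $A$ is supported on a finite union of fibers, i.e.\ $A$ is a fiber sheaf. There is no real obstacle in this lemma---it is an immediate consequence of the effectivity supplied by Lemma~\ref{lem:1dimshch2etaeffective} together with the ampleness of $H_B$---and the only point requiring a little care is the bookkeeping of the intersection numbers $\Theta f = 1$ and $(p^\ast H_B)f = 0$ in the matrix notation.
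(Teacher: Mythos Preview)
Your proof is correct and follows exactly the same approach as the paper's own proof: write $\ch_2(A) = \Theta p^\ast \ol{\eta} + \ol{a}f$, invoke Lemma~\ref{lem:1dimshch2etaeffective} to get $\ol{\eta}$ effective on $B$, and conclude $(p^\ast H_B)\ch_2(A) = H_B\ol{\eta} \geq 0$ by ampleness of $H_B$. Your observation that this duplicates the first assertion of Lemma~\ref{lem:cohleq10dimfiberpositivity} is also accurate, and your added discussion of the equality case (forcing $A$ to be a fiber sheaf) matches what the paper uses elsewhere.
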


\begin{proof}
Take any $A \in \Coh^{\leq 1}(X)$ and write $\ch_2(A) = \Theta p^\ast \ol{\eta} + \ol{a}f$ where $\ol{\eta}\in A^1(B), \ol{a} \in \mathbb{Q}$.  By Lemma \ref{lem:1dimshch2etaeffective}, $\ol{\eta}$ is an effective divisor class on $B$.  Thus $0 \leq H_B \ol{\eta} = (p^\ast H_B)\ch_2(A)$.
\end{proof}

\paragraph We will now write
\begin{align*}
  \Ac_{1/2} &= \Ac_{\bullet,1/2} \cap \Ac_\bullet^\circ \\
  \Ac_0 &= \Ac_{\bullet,1/2,0} \cap \Ac_{\bullet,1/2}^\circ\\
  \Ac_{-1/2} &= \Ac_{\bullet,1/2,0}^\circ.
\end{align*}
This way, the torsion quadruple \eqref{eq:paper14eq35}  in $\Bl$ can be rewritten as
\begin{equation*}
  (\Ac_\bullet,\, \Ac_{1/2}, \, \Ac_0, \, \Ac_{-1/2}).
\end{equation*}

With the same proofs as their counterparts in \cite{Lo14}, we now have the following results that altogether establish the Harder-Narasimhan property for $\oZl$-stability on $\Bl$.

\begin{lem}\label{lem:paper14lem6-5analogue}
For $i=\tfrac{1}{2}, 0, -\tfrac{1}{2}$, and any $F \in \Ac_i$, we have $\phi (F) \to i$.
\end{lem}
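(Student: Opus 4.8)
The plan is to reduce the statement entirely to the explicit phase computations of \ref{para:phasecomputations} (together with the phase analysis in the paragraph on $W_{1,\whPhi}\cap\Tc^{l,0}$), combined with two structural inputs: the torsion-class/torsion-tuple orderings recorded in Lemmas \ref{lem:paper14lem6-1analogue}, \ref{lem:paper14lem6-2analogue} and \ref{para:paper14appendixanalogues}, and the elementary remark that the limiting phase is additive-compatible. Concretely, since $\oZw$ is additive on $\Bl$-short exact sequences and, within each of the three target classes, the dominant (in $v$) part of $\oZw$ has a fixed sign and limiting argument (positive imaginary $O(v)$ for $\tfrac12$, positive real for $0$, negative imaginary $O(v)$ for $-\tfrac12$, read off from \eqref{eq:oZwFOvform}), the property ``$\phi\to c$'' for a fixed $c\in\{\tfrac12,0,-\tfrac12\}$ is closed under $\Bl$-extensions: the dominant parts of two summands point in the same direction and cannot cancel. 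This lets me prove the lemma blockwise.

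The case $i=\tfrac12$ is immediate: $\Ac_{1/2}=\Ac_{\bullet,1/2}\cap\Ac_\bullet^\circ\subseteq\Ac_{\bullet,1/2}$, so Lemma \ref{lem:paper14lem5-7analogue} already gives $\phi(F)\to\tfrac12$ for every $F\in\Ac_{1/2}$. For $i=0$, take $F\in\Ac_0=\Ac_{\bullet,1/2,0}\cap\Ac_{\bullet,1/2}^\circ$. By the remark recorded just after the definition of $\Ac_{\bullet,1/2,0}$ we know a priori $\phi(F)\to\tfrac12$ or $\phi(F)\to0$, so it suffices to rule out $\phi(F)\to\tfrac12$. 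Using the torsion-tuple refinement of $\Ac_{\bullet,1/2,0}$ supplied by \ref{para:paper14appendixanalogues} (in which $\Ac_{\bullet,1/2}$ forms an initial segment and the remaining generators $\Fc^{l,-}[1]$ and the listed sheaf categories, all of phase limit $0$ by \ref{para:phasecomputations}(ii),(iii)(B)(b),(iv), form the complementary segment), the condition $F\in\Ac_{\bullet,1/2}^\circ$ says precisely that the initial-segment (torsion) part of $F$ vanishes, so $F$ is an iterated $\Bl$-extension of the phase-$0$ generators only; extension-closedness of ``$\phi\to0$'' then yields $\phi(F)\to0$.

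For $i=-\tfrac12$, take $F\in\Ac_{-1/2}=\Ac_{\bullet,1/2,0}^\circ$. By \ref{para:paper14appendixanalogues}(v) we have $F\in W_{1,\whPhi}\cap\Tc^l$, and by \ref{para:TFlastdefinitions}(iv) such an $F$ is an extension of a sheaf in $\Tc^{l,0}$ by a sheaf in $W_{1,\whPhi}\cap\Coh^{\leq 2}(X)$. Case (v) of \ref{para:phasecomputations} gives $\phi\to-\tfrac12$ on the $\Tc^{l,0}$ factor, while the analysis in the paragraph on $W_{1,\whPhi}\cap\Tc^{l,0}$ shows that a factor of $W_{1,\whPhi}\cap\Coh^{\leq 2}(X)$ has $\phi\to-\tfrac12$ (dimension $2$, $H_B\ol\eta<0$), $\phi\to0$ (dimension $2$, $H_B\ol\eta=0$), or $\phi=\tfrac12$ (a $1$-dimensional fiber sheaf). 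The latter two types lie in $\Ac_{\bullet,1/2,0}$ — the fiber sheaves in $\Coh^{\leq 1}(X)\subseteq\Ac_{\bullet,1/2}$, and the $H_B\ol\eta=0$ pieces in $\Ac_{\bullet,1/2,0}$ by Lemma \ref{lem:paper14lemA1analogue}. Since $\Coh^{\leq 1}(X)$ and $\Coh(p)_0$ are Serre subcategories of $\Bl$ (Lemma \ref{lem:paper14lem5-4analogue}), I would realize these as genuine $\Bl$-subobjects of $F$; the hypothesis $F\in\Ac_{\bullet,1/2,0}^\circ$ then forces them to vanish, so every factor of $F$ has $\phi\to-\tfrac12$, and extension-closedness gives $\phi(F)\to-\tfrac12$.

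The main obstacle is the elimination step shared by $i=0$ and $i=-\tfrac12$: one must guarantee that the ``wrong-phase'' constituents occur as honest $\Bl$-\emph{subobjects}, so that the $(-)^\circ$ vanishing conditions can be applied, rather than merely as subquotients in some filtration. This is exactly what the torsion-tuple orderings of \ref{para:paper14appendixanalogues} (with $\Ac_{\bullet,1/2}$, resp.\ $\Ac_{\bullet,1/2,0}$, placed as an initial segment) and the Serre-subcategory properties of $\Coh^{\leq 1}(X)$ and $\Coh(p)_0$ in $\Bl$ are designed to provide; verifying that these structures carry over from the product case of \cite{Lo14} to the present Weierstra{\ss} setting — where the extra $-\tfrac{1}{24}K_B^2$ correction terms in the cohomological transform appear — is the delicate part, but each ingredient has already been secured in the preceding lemmas.
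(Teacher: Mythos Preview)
Your overall strategy is sound and close to the paper's, but there is a genuine gap in the $i=0$ step, and the mechanism you invoke in the $i=-\tfrac12$ step is not the right one.

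For $i=0$: you assert that \ref{para:paper14appendixanalogues} supplies a torsion-tuple refinement of $\Ac_{\bullet,1/2,0}$ in $\Bl$ with $\Ac_{\bullet,1/2}$ as an initial segment and the phase-$0$ generators as the complementary segment. But the nested sequences in \ref{para:paper14appendixanalogues}(iii),(iv) are torsion classes in $\Coh(X)$, not in $\Bl$, and they concern only $H^0$-parts. More importantly, even granting that $(\Ac_{\bullet,1/2},\Ac_{\bullet,1/2}^\circ)$ is a torsion pair in $\Bl$, the condition $F\in\Ac_{\bullet,1/2}^\circ$ only says $F$ has no $\Bl$-\emph{subobject} in $\Ac_{\bullet,1/2}$; it does not force every $\Bl$-filtration of $F$ to avoid $\Ac_{\bullet,1/2}$-subquotients, so your conclusion ``$F$ is an iterated $\Bl$-extension of the phase-$0$ generators only'' does not follow. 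The extension closure defining $\Ac_{\bullet,1/2,0}$ does not come with an ordering that would make this automatic. A correct argument here proceeds directly on Chern characters: if $f\ch_1(F)>0$ then $\Re\oZw(F)$ is $O(v^2)$ and positive, so $\phi(F)\to 0$; if $f\ch_1(F)=0$ one first shows $H^{-1}(F)=0$ (using $\Fc^{l,0}[1]\subset\Ac_{\bullet,1/2}$) and then analyses the sheaf $F=H^0(F)$ using the nested torsion classes in $\Coh(X)$ together with the $\Ac_{\bullet,1/2}^\circ$ hypothesis.

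For $i=-\tfrac12$: your decomposition $M_0\subseteq F$ with $F/M_0\in\Tc^{l,0}$ is exactly what the paper uses, and you correctly identify which pieces of $M_0$ lie in $\Ac_{\bullet,1/2,0}$. However, ``$\Coh^{\leq 1}(X)$ and $\Coh(p)_0$ are Serre in $\Bl$'' is not what promotes the bad pieces to $\Bl$-subobjects. The relevant mechanism is simpler: $M_0\hookrightarrow F$ is a sheaf inclusion with cokernel $F/M_0\in\Tc^{l,0}\subset\Tc^l\subset\Bl$, so it is a $\Bl$-short exact sequence, and since $\Ac_{\bullet,1/2,0}^\circ$ is a torsion-free class (closed under $\Bl$-subobjects), $M_0\in\Ac_{\bullet,1/2,0}^\circ$. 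The paper then argues cleanly: using this, one first kills any $\Coh^{\leq 1}$-subsheaves of $F$, so $M_0$ (if nonzero) is pure $2$-dimensional $\whPhi$-WIT$_1$; then either $H_B\ol\eta<0$ (whence $\phi(M_0)\to -\tfrac12$), or $H_B\ol\eta=0$, in which case Corollary~\ref{cor:petaeffective} and the argument of Lemma~\ref{lem:paper14lemA1analogue} place $M_0$ in $\Ac_{\bullet,1/2,0}$, forcing $M_0=0$.
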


\begin{proof}
The proof of \cite[Lemma 6.5]{Lo14} essentially applies, with the intersection numbers $H^2\ch_{10}, D\ch_{11}$ in that proof replaced with $f\ch_1, (p^\ast H_B)\ch_2$, respectively.

Here, we point out the slight modifications needed for the case of $F \in \Ac_{-1/2}$.  As in the proof of \cite[Lemma 6.5]{Lo14}, when $F \in \Ac_{-1/2}$ we have $F = H^0(F) \in W_{1,\whPhi} \cap \Tc^l$ and $F$ has a filtration in $\Coh (X)$
\[
  M_0 \subseteq M_1 = F
\]
where $M_1/M_0 \in \Tc^{l,0}$, while  $M_0$ is a $\whPhi$-WIT$_1$ pure 2-dimensional sheaf.  Let us write $\ch_2 (M_0) = \Theta p^\ast \ol{\eta} + \ol{a}f$.  Since $M_0$ is $\whPhi$-WIT$_1$, we know  $(p^\ast H_B)\ch_2(M_0)=H_B\ol{\eta} \leq 0$ from Lemma \ref{lem:LZ2Remark5-17analogue}, and that  $p^\ast \ol{\eta}$ is effective.

Suppose $(p^\ast H_B)\ch_2(M_0)=H_B \ol{\eta}=0$.  Then $\ol{\eta}=0$ by Corollary \ref{cor:petaeffective}, in which case $\wh{M_0} \in \Coh^{\leq 1}(X)$.  By the same argument as in the last part of the proof of \cite[Lemma A.1]{Lo14},  this implies $M_0$ itself lies in the extension closure $\langle   \Coh (p)_0, \scalea{\gyoung(;;+;*,;;0;*)}\rangle$, i.e.\ $M_0 \in \Ac_{\bullet, 1/2, 0}$.  Being a subsheaf of $F$, we  know $M_0$ also lies in $\Ac_{\bullet, 1/2,0}^\circ$, forcing $M_0=0$.  Hence we must have $(p^\ast H_B)\ch_2 (M_0) < 0$, i.e.\ $M_0 \in \scalea{\gyoung(;;+;*,;;-;*)}$.  Then from the computations in \ref{para:phasecomputations}  we obtain $\phi (F) \to -\tfrac{1}{2}$.
\end{proof}

\begin{lem}\label{lem:paper14lem6-6analogue}
An object $F \in \Ac_\bullet^\circ$ is $\oZl$-semistable in $\Bl$ iff, for some $i=\tfrac{1}{2}, 0, -\tfrac{1}{2}$, we have:
 \begin{itemize}
 \item $F \in \Ac_i$;
 \item for any strict monomorphism $0 \neq F' \hookrightarrow F$ in $\Ac_i$, we have $\phi (F') \preceq \phi (F)$.
 \end{itemize}
\end{lem}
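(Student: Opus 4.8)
The plan is to run off the torsion quadruple $(\Ac_\bullet, \Ac_{1/2}, \Ac_0, \Ac_{-1/2})$ in $\Bl$ together with the phase limits recorded in Lemma~\ref{lem:paper14lem6-5analogue}, namely $\phi(G)\to i$ for every nonzero $G\in\Ac_i$ with $i\in\{\tfrac{1}{2},0,-\tfrac{1}{2}\}$. Throughout I would use that, since $\oZw(E)\in -i\mathbb{H}_0$ for all $v\gg 0$ by Lemma~\ref{lem:paper14lem4-7analogue}, the reduced central charge is a genuine stability function on $\Bl$ for each large $v$; consequently the asymptotic phase obeys the see-saw property, so for a $\Bl$-short exact sequence $0\to M\to F\to N\to 0$ the conditions $\phi(M)\preceq\phi(F)$, $\phi(M)\preceq\phi(N)$ and $\phi(F)\preceq\phi(N)$ are all equivalent, and the phase of an extension lies asymptotically between the phases of its two factors.

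For the forward implication I would take $F\in\Ac_\bullet^\circ$ nonzero and $\oZl$-semistable, and examine its torsion-quadruple filtration $0=E_0\subseteq E_1\subseteq E_2\subseteq E_3=F$. Here $E_0\in\Ac_\bullet$, but $F\in\Ac_\bullet^\circ$ forces $E_0\hookrightarrow F$ to vanish, so $E_0=0$ and the filtration reduces to $0\subseteq E_1\subseteq E_2\subseteq F$ with $E_1\in\Ac_{1/2}$, $E_2/E_1\in\Ac_0$ and $F/E_2\in\Ac_{-1/2}$. Let $S\hookrightarrow F$ be the highest-phase nonzero graded piece (which, being the first nonzero step of the increasing filtration, is a genuine subobject) and let $F\twoheadrightarrow Q$ be the lowest-phase nonzero graded piece (which, being the last nonzero step, is a genuine quotient). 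If two or more pieces were nonzero, then $S$ and $Q$ would come from different pieces, whence $\lim\phi(S)>\lim\phi(Q)$ among the values $\tfrac{1}{2}>0>-\tfrac{1}{2}$; but semistability gives $\phi(S)\preceq\phi(F)\preceq\phi(Q)$, forcing $\lim\phi(S)\le\lim\phi(Q)$, a contradiction. Hence exactly one graded piece survives and $F\in\Ac_i$ for a unique $i$. The second bullet is then automatic: a strict monomorphism $0\neq F'\hookrightarrow F$ in $\Ac_i$ is in particular a $\Bl$-monomorphism, so $\oZl$-semistability yields $\phi(F')\preceq\phi(F)$.

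For the converse I would fix $i$ and $F\in\Ac_i$ as in the statement, take an arbitrary $\Bl$-short exact sequence $0\to M\to F\to N\to 0$ with $M,N\neq 0$, and reduce by the see-saw remark to proving $\phi(M)\preceq\phi(F)$. Since $\Ac_\bullet$, $\Ac_{\bullet,1/2}$, $\Ac_{\bullet,1/2,0}$ are torsion classes in $\Bl$ by Lemmas~\ref{lem:paper14lem6-1analogue} and \ref{lem:paper14lem6-2analogue}, their right-orthogonals $\Ac_\bullet^\circ=\langle\Ac_{1/2},\Ac_0,\Ac_{-1/2}\rangle$, $\Ac_{\bullet,1/2}^\circ=\langle\Ac_0,\Ac_{-1/2}\rangle$ and $\Ac_{\bullet,1/2,0}^\circ=\Ac_{-1/2}$ are closed under $\Bl$-subobjects. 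Applying the one appropriate to $i$, the subobject $M$ lies in $\langle\Ac_i,\Ac_{<i}\rangle$, where $\Ac_{<i}$ denotes the pieces of strictly smaller limiting phase. Decomposing $M$ along the induced torsion pair $(\Ac_i,\langle\Ac_{<i}\rangle)$ produces $0\to M_i\to M\to M_{<}\to 0$ with $M_i\in\Ac_i$ and $M_{<}\in\langle\Ac_{<i}\rangle$. The composite $M_i\hookrightarrow M\hookrightarrow F$ is a strict monomorphism in $\Ac_i$, so the hypothesis gives $\phi(M_i)\preceq\phi(F)$; meanwhile $M_{<}$ is assembled from pieces whose limiting phases are strictly below $i=\lim\phi(F)$, so $\phi(M_{<})\prec\phi(F)$. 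By see-saw $\phi(M)\preceq\max(\phi(M_i),\phi(M_{<}))\preceq\phi(F)$, with the degenerate cases $M_i=0$ or $M_{<}=0$ being trivial.

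The hard part will be the bookkeeping in the converse: one must be certain that the ``$\Ac_i$-part'' $M_i$ of a $\Bl$-subobject $M\hookrightarrow F$ genuinely yields a monomorphism in $\Ac_i$ to which the hypothesis applies, and that the complementary part $M_{<}$ contributes only phases strictly below $\phi(F)$. Both points rest entirely on the torsion-class structure of $\Ac_\bullet,\Ac_{\bullet,1/2},\Ac_{\bullet,1/2,0}$ (closure of their orthogonals under subobjects) and on the phase limits of Lemma~\ref{lem:paper14lem6-5analogue}; once these are granted the argument is purely formal and runs exactly as in the product-threefold case \cite[Lemma 6.6]{Lo14}.
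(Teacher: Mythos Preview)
Your proposal is correct and follows the same approach as the paper, which simply refers to \cite[Lemma 6.6]{Lo14}; you have in fact supplied the formal argument (torsion quadruple plus the phase limits of Lemma~\ref{lem:paper14lem6-5analogue} plus see-saw) that the paper leaves implicit.

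One point deserves care. In this paper (see the proof of Proposition~\ref{prop:sh-transf-ellProp5-3analogue}(2)(a), where the assumption that $\beta_i$ is a strict injection in $\Bc_{1,1/2}\cap\Bc_1^\circ$ is used to conclude a vanishing on the \emph{cokernel} $G_i$), ``strict monomorphism in $\Ac_i$'' means a $\Bl$-monomorphism whose cokernel also lies in $\Ac_i$. Under that reading your converse step ``the composite $M_i\hookrightarrow M\hookrightarrow F$ is a strict monomorphism in $\Ac_i$'' is not immediate: the cokernel $F/M_i$ need not lie in $\Ac_i$, since $\Ac_i$ is the intersection of a torsion class with a torsion-free class and is closed under neither subobjects nor quotients in $\Bl$. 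The fix is one extra line: replace $M_i$ by its saturation $M_i'$, defined as the preimage in $F$ of the $\Ac_{>i}$-part of $F/M_i$ (that is, the $\Ac_{\bullet,1/2}$-part if $i=0$, or the $\Ac_{\bullet,1/2,0}$-part if $i=-\tfrac12$). Then $F/M_i'\in\Ac_i$ and $M_i'\in\Ac_i$ (using closure of the relevant torsion class under extensions and of the torsion-free class under subobjects), so the hypothesis applies to $M_i'\hookrightarrow F$; since $M_i'/M_i$ lies in a torsion class with strictly higher limiting phase, see-saw then gives $\phi(M_i)\preceq\phi(M_i')\preceq\phi(F)$, and your argument concludes as written.
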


\begin{proof}
From the torsion quadruple $(\Ac_\bullet,\, \Ac_{1/2}, \, \Ac_0, \, \Ac_{-1/2})$ in $\Bl$ and Lemma \ref{lem:paper14lem6-5analogue}, we know that any $\oZl$-semistable object in $\Ac_\bullet^\circ$ must lie in $\Ac_i$ for $i=\tfrac{1}{2}, 0$ or $-\tfrac{1}{2}$.  The remainder of the proof follows that of  \cite[Lemma 6.6]{Lo14}.
\end{proof}

\begin{thm}\label{thm:paper14thm6-7analogue}
The Harder-Narasimhan property holds for $(\oZw, \Bl)$.  That is, every object $F \in \Bl$ admits a filtration in $\Bl$
\[
  F_0 \subseteq F_1 \subseteq \cdots \subseteq F_n = F
\]
where each subfactor $F_{i+1}/F_i$ is $\oZl$-semistable, and $\phi (F_i/F_{i-1}) \succ \phi (F_{i+1}/F_i)$ for each $i$.
\end{thm}

\begin{proof}
The strategy of the proof is the same as that of \cite[Theorem 6.7]{Lo14}, but we still describe the outline here for the sake of clarity.

By the torsion quadruple $(\Ac_\bullet,\, \Ac_{1/2}, \, \Ac_0, \, \Ac_{-1/2})$, every object $F$ in $\Bl$ has  a filtration
\[
 F_\bullet \subseteq F_1 \subseteq F_2 \subseteq F_3 = F
\]
where $F_\bullet, F_1/F_\bullet, F_2/F_1, F_3/F_2$ lie in $\Ac_\bullet, \Ac_{1/2}, \Ac_0, \Ac_{-1/2}$, respectively.  Using the same argument as in \cite[Theorem 2.29]{Toda1} (which in turn follows the argument of \cite[Proposition 2.4]{StabTC}), along with Lemma \ref{lem:paper14lem6-6analogue} and the finiteness conditions in Proposition \ref{prop:paper14pro1}, one can show that any infinite chain of destabilising quotients or destabilising subobjects using strict morphisms in an $\Ac_i$ must terminate, meaning $F_1/F_\bullet, F_2/F_1, F_3/F_2$ admit HN filtrations in $\Ac_{1/2}, \Ac_0, \Ac_{-1/2}$, respectively.  Concatenating these filtrations then gives a filtration for $F/F_\bullet$
\[
  F'_0 \subseteq F'_1 \subseteq \cdots \subseteq F'_m = F/F_\bullet
\]
in $\Bl$ where $\phi (F_0') \succ \phi (F'_1/F'_0) \succ \cdots \succ \phi (F'_m/F'_{m-1})$ and each $F_i'/F_{i-1}'$ (for $1 \leq i \leq m$) is $\oZl$-semistable.  Lifting this  filtration for $F/F_\bullet$ to a filtration
\begin{equation}\label{eq:fil1}
  F_0 \subseteq F_1 \subseteq \cdots \subseteq F_m = F
\end{equation}
and noting $\phi (F_i/F_{i-1}) = \phi (F'_i/F'_{i-1})$ for all $i$, we have that $\phi (F_0) \succ \phi (F_1/F_0) \succ \cdots \succ \phi (F_m/F_{m-1})$ and that each $F_i/F_{i-1} \cong F'_i/F'_{i-1}$ (for $i \geq 1$) is $\oZl$-semistable.

To finish off, if $\phi (F'_0)=\phi (F_0/F_\bullet)=\tfrac{1}{2}$, then $\phi (F_0)=\tfrac{1}{2}$ and \eqref{eq:fil1} is the HN filtration for $F$.  On the other hand, if $\tfrac{1}{2} \succ \phi (F'_0)$, then the filtration
\[
 F_\bullet \subseteq F_0 \subseteq F_1 \subseteq \cdots \subseteq F_m = F
\]
is the HN filtration of $F$ with respect to $\oZl$-semistability.
\end{proof}

We also have the following analogue of  \cite[Lemma 7.1]{Lo14} with the same proof:

\begin{lem}\label{lem:limvsnonlimtiltstab}
Suppose $a,b>0$ are fixed and satisfy \eqref{eq:constraint1}.  Let $\omega$ be an ample divisor of the form \eqref{eq:boandomega} with $v,u>0$ satisfying \eqref{eq:limitcurve}.  Suppose $E \in D^b(X)$ is such that, for some $v_0 > 0$, we have $E \in \mathcal{B}_\omega$ and $E$ is $\nu_\omega$-(semi)stable for all $v > v_0$.  Then $E \in \Bc^l$ and $E$ is $\oZl$-(semi)stable.
\end{lem}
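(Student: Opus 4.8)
The plan is to prove both assertions by transporting the given tilt (semi)stability at finite parameter $v$ directly into the asymptotic setting, using the fact that $\Bl$ is by design the ``limit'' of the hearts $\Bc_\omega$ as $v \to \infty$ along \eqref{eq:limitcurve}.

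First I would check that $E \in \Bl$. As observed in the proof of Lemma \ref{lem:paper14lem5-4analogue}, parts (1)(a) and (2)(a) of Lemma \ref{lem:4-1n4-3paper14analogue} let us characterise $\Bl$ as the subcategory of all objects of $D^b(X)$ that lie in $\Bc_\omega$ for all $v \gg 0$ subject to \eqref{eq:limitcurve}. Since the hypothesis supplies $E \in \Bc_\omega$ for every $v > v_0$, this holds in particular for all sufficiently large $v$, whence $E \in \Bl$.

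For the (semi)stability claim I would take an arbitrary $\Bl$-short exact sequence $0 \to M \to E \to N \to 0$ with $M, N \neq 0$ and aim to show $\phi(M) \prec \phi(N)$ (resp.\ $\phi(M) \preceq \phi(N)$). This sequence is the same datum as an exact triangle $M \to E \to N \to M[1]$ with $M, N, E \in \Bl$. Since $M$ and $N$ also lie in $\Bl$, there is a threshold $v_1$ beyond which all three of $M, E, N$ lie in $\Bc_\omega$; as a triangle whose three vertices lie in the heart of a t-structure is automatically a short exact sequence in that heart, the sequence is, for every $v > v_1$, also a $\Bc_\omega$-short exact sequence with nonzero $M$ and $N$. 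Now for each $v > \max(v_0, v_1)$ the object $E$ is $\nu_\omega$-(semi)stable, which is equivalent to $\oZw$-(semi)stability. The phase germ $\phi(\cdot)(v)$ used in the definition of $\oZl$-stability in \ref{para:oZlstabasymppolystab} agrees, at each such $v$, with the $\oZw$-phase (both are fixed by $\oZw(F) \in \mathbb{R}_{>0}e^{i\pi\phi(F)}$, with value $\tfrac{1}{2}$ when $\oZw(F)=0$), and these phases are legitimate because $M, N, E \in \Bl$ forces $\oZw$ to take values in $-i\mathbb{H}_0$ for $v \gg 0$ by Lemma \ref{lem:paper14lem4-7analogue}. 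Hence $\oZw$-(semi)stability of $E$ yields $\phi(M)(v) < \phi(N)(v)$ (resp.\ $\leq$) for all $v > \max(v_0, v_1)$; passing to the germ as $v \to \infty$ along \eqref{eq:limitcurve} gives precisely $\phi(M) \prec \phi(N)$ (resp.\ $\phi(M) \preceq \phi(N)$), so $E$ is $\oZl$-(semi)stable.

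I do not expect a serious obstacle here; the argument is essentially a compatibility check between two hearts and their phase functions. The only points demanding care are the bookkeeping of the two thresholds $v_0$ (semistability) and $v_1$ ($\Bc_\omega$-membership of $M, E, N$), so that both hold simultaneously for large $v$, and the verification that the limit tilt phase coincides with the $\oZw$-phase at each finite $v$ and not merely in the germ — it is this pointwise coincidence that lets the finite-$v$ inequalities pass directly to the required asymptotic inequality of germs.
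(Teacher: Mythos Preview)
Your proof is correct and follows the natural approach: identify $\Bl$ with the objects lying in $\Bc_\omega$ for all $v\gg 0$, then transport the $\Bl$-short exact sequence into each $\Bc_\omega$ for large $v$ and read off the phase inequality pointwise. The paper does not give a proof of this lemma but defers to \cite[Lemma 7.1]{Lo14}, whose argument is precisely the compatibility check you have written out; so your proposal is essentially the paper's intended proof.
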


Lemma \ref{lem:limvsnonlimtiltstab} makes precise a connection between tilt stability and limit tilt stability on elliptic threefolds; informally, one can think of it as saying that tilt stability ``for  $\omega$ sufficiently close to the fiber direction'' implies limit tilt stability.  This is similar to how, on an elliptic surface, Bridgeland stability ``sufficiently close to the fiber direction'' implies Bridgeland stability - see \cite[Theorem 12.3]{Lo20}. Therefore, Lemma \ref{lem:limvsnonlimtiltstab} suggests that limit tilt stability should be closely related to the behaviour of Bridgeland stability conditions under autoequivalences on elliptic threefolds.

\bibliography{refs}{}

\begin{thebibliography}{10}

\bibitem{FMNT}
C.~Bartocci, U.~Bruzzo, and D.~Hern\'{a}ndez-Ruip\'{e}rez.
\newblock {\em Fourier-Mukai and Nahm Transforms in Geometry and Mathematical
  Physics}, volume 276.
\newblock Birkh\"{a}user, 2009.
\newblock Progress in Mathematics.

\bibitem{BayerPBSC}
A.~Bayer.
\newblock {Polynomial Bridgeland stability conditions and the large volume
  limit}.
\newblock {\em Geom. Topol.}, 13:2389--2425, 2009.

\bibitem{BMT1}
A.~Bayer, E.~Macr\`{i}, and Y.~Toda.
\newblock {Bridgeland stability conditions on threefolds I: Bogomolov-Gieseker
  type inequalities}.
\newblock {\em J. Algebraic Geom.}, 23:117--163, 2014.

\bibitem{StabTC}
T.~Bridgeland.
\newblock Stability conditions on triangulated categories.
\newblock {\em Ann. Math.}, 166:317--345, 2007.

\bibitem{SCK3}
T.~Bridgeland.
\newblock {Stability conditions on K3 surfaces}.
\newblock {\em Duke Math. J.}, 141:241--291, 2008.

\bibitem{BMef}
T.~Bridgeland and A.~Maciocia.
\newblock {Fourier-Mukai transforms for K3 and elliptic fibrations}.
\newblock {\em J. Algebraic Geom.}, 11:629--657, 2002.

\bibitem{CL}
W.-Y. Chuang and J.~Lo.
\newblock Stability and {F}ourier-{M}ukai transforms on higher dimensional
  elliptic fibrations.
\newblock {\em Comm. Anal. Geom.}, 24(5):1047--1084, 2016.

\bibitem{Dia15}
D.-E. Diaconescu.
\newblock {Vertical sheaves and Fourier-Mukai transform on elliptic Calabi-Yau
  threefolds}.
\newblock {\em Commun. Num. Theor. Phys.}, 10:373--431, 2016.

\bibitem{Fulton}
W.~Fulton.
\newblock {\em Intersection Theory}.
\newblock Springer, second edition, 1998.

\bibitem{HRS}
D.~Happel, I.~Reiten, and S.~O. Smal\o.
\newblock Tilting in abelian categories and quasitilted algebras.
\newblock {\em Mem. Amer. Math. Soc.}, 120, 1996.

\bibitem{KollarLRS}
J.~Koll\'{a}r.
\newblock {\em Lectures on Resolution of Singularities (AM-166)}.
\newblock Princeton University Press, 2007.

\bibitem{KM}
J.~Koll{\'a}r and S.~Mori.
\newblock {\em {Birational Geometry of Algebraic Varieties}}, volume 134 of
  {\em Cambrige Tracts in Mathematics}.
\newblock Cambridge University Press, 1998.

\bibitem{li2019stability}
Chunyi Li.
\newblock On stability conditions for the quintic threefold.
\newblock {\em Invent. Math.}, 218(1):301--340, 2019.

\bibitem{LLM}
W.~Liu, J.~Lo, and C.~Martinez.
\newblock {Fourier-Mukai transforms and stable sheaves on Weierstrass elliptic
  surfaces}.
\newblock Preprint. arXiv:1910.02477 [math.AG], 2019.

\bibitem{Lo7}
J.~Lo.
\newblock {Stability and Fourier-Mukai transforms on elliptic fibrations}.
\newblock {\em Adv. Math.}, 255:86--118, 2014.

\bibitem{Lo11}
J.~Lo.
\newblock t-structures on elliptic fibrations.
\newblock {\em Kyoto J. Math.}, 56(4):701--735, 2016.

\bibitem{Lo14}
J.~Lo.
\newblock {Fourier-Mukai transforms of slope stable torsion-free sheaves on a
  product elliptic threefold}.
\newblock {\em Adv. Math.}, 358:106846, 2019.

\bibitem{LZ2}
J.~Lo and Z.~Zhang.
\newblock {Preservation of semistability under Fourier-Mukai transforms}.
\newblock {\em Geom. Dedicata}, 193:90--119, 2018.

\bibitem{Lo20}
Jason Lo.
\newblock Weight functions, tilts, and stability conditions.
\newblock Preprint. arXiv:2007.06857 [math.AG], 2020.

\bibitem{MV}
David~R. Morrison and Cumrun Vafa.
\newblock {Compactifications of F-theory on Calabi-Yau threefolds (II)}.
\newblock {\em Nuclear Phys. B}, 476(3):437 -- 469, 1996.

\bibitem{OS1}
G.~Oberdieck and J.~Shen.
\newblock {Curve counting on elliptic Calabi-Yau threefolds via derived
  categories}.
\newblock {\em J. Eur. Math. Soc.}, 22(3):967--1002, 2019.

\bibitem{Pol2}
A.~Polishchuk.
\newblock Holomorphic bundles on 2-dimensional noncommutative toric orbifolds.
\newblock In C.~Consani and M.~Marcolli, editors, {\em Noncommutative Geometry
  and Number Theory}, pages 341--359. Max-Planck-Institute for Mathematics,
  Bonn, 2006.

\bibitem{Pol}
A.~Polishchuk.
\newblock Constant families of t-structures on derived categories of coherent
  sheaves.
\newblock {\em Mosc. Math. J.}, 7:109--134, 2007.

\bibitem{FMW}
J.~Morgan R.~Friedman and E.~Witten.
\newblock Vector bundles over elliptic fibrations.
\newblock {\em J. Algebraic Geom.}, 8:279--401, 1999.

\bibitem{Toda1}
Y.~Toda.
\newblock Limit stable objects on calabi-yau 3-folds.
\newblock {\em Duke Math. J.}, 149(1):157--208, 2009.

\bibitem{Toda2}
Y.~Toda.
\newblock {Hall algebras in the derived category and higher rank DT
  invariants}.
\newblock {\em Algebr. Geom.}, 7(3):240--262, 2020.

\bibitem{YosAS}
K.~Yoshioka.
\newblock Moduli spaces of stable sheaves on abelian surfaces.
\newblock {\em Math. Ann.}, 321:817--884, 2001.

\bibitem{YosPII}
K.~Yoshioka.
\newblock {Perverse coherent sheaves and Fourier-Mukai transforms on surfaces
  II}.
\newblock {\em Kyoto J. Math.}, 55(2):365--459, 2015.

\end{thebibliography}
\bibliographystyle{plain}

\end{document}